\documentclass[a4paper,11pt]{amsart}

\tolerance=1000
\hbadness=8000
\hfuzz=15.00pt
\vbadness=10000
\vfuzz=5.00pt

\usepackage{amsfonts,latexsym,rawfonts,amsmath,amssymb,amsthm, mathrsfs, lscape}
\usepackage[all]{xy}
\usepackage[english]{babel}
\usepackage[utf8]{inputenc}
\usepackage[T1]{fontenc}
\usepackage{graphicx}
\usepackage{booktabs}
\usepackage{array, tabularx}
\usepackage{setspace}
\setstretch{1.01}
\usepackage{textcomp}
\usepackage{tikz}
\usepackage{multirow}
\usepackage{paralist}
\usepackage{lscape}
\usepackage{placeins}
\usepackage{soul}

\usepackage[hypertexnames=false,
backref=page,
    pdftex,
    pdfpagemode=UseNone,
    breaklinks=true,
    extension=pdf,
    colorlinks=true,
    linkcolor=blue,
    citecolor=blue,
    urlcolor=blue,
]{hyperref}

\renewcommand{\Re}{\mathsf{Re}\,}
\renewcommand{\Im}{\mathsf{Im}\,}

\newcommand{\referenza}{}

\newtheorem{thm}{Theorem}[section]
\newtheorem*{thm*}{Theorem \referenza}
\newtheorem{cor}[thm]{Corollary}
\newtheorem*{cor*}{Corollary \referenza}
\newtheorem{lem}[thm]{Lemma}
\newtheorem*{lem*}{Lemma \referenza}

\newtheorem{prop}[thm]{Proposition}
\newtheorem*{prop*}{Proposition \referenza}

\newtheorem*{conj*}{Conjecture \referenza}
\newtheorem{rmk}[thm]{Remark}

\newtheorem{exa}[thm]{Example}

\numberwithin{equation}{section}

\def \N {\mathbb N}

\def \Q {\mathbb Q}
\def \R {\mathbb R}
\def \C {\mathbb C}
\def \Z {\mathbb Z}

\newcommand{\sspace}{\text{\--}}
\newcommand{\ssspace}{\text{\textdblhyphen}}

\DeclareMathOperator{\imm}{im}

\allowdisplaybreaks[1]

\title[lcs cohomologies]{Cohomologies of locally conformally symplectic manifolds and solvmanifolds}

\author{Daniele Angella}
\address[Daniele Angella]{
Dipartimento di Matematica e Informatica ``Ulisse Dini''\\
Universit\`a degli Studi di Firenze\\
viale Morgagni 67/a\\
50134 Firenze, Italy
}
\email{daniele.angella@gmail.com}
\email{daniele.angella@unifi.it}

\author{Alexandra Otiman}
\address[Alexandra Otiman]{
Institute of Mathematics "Simion Stoilow" of the Romanian Academy, 21,
Calea Grivitei Street, 010702, Bucharest, Romania
}
\address[Alexandra Otiman]{
University of Bucharest, Faculty of Mathematics and Computer Science, 14
Academiei Str., Bucharest, Romania.
}
\email{alexandra\_otiman@yahoo.com}

\author{Nicoletta Tardini}
\address[Nicoletta Tardini]{Dipartimento di Matematica\\
Universit\`a di Pisa\\
largo Bruno Pontecorvo 5\\
56127 Pisa, Italy
}
\email{nicoletta.tardini@gmail.com}
\email{tardini@mail.dm.unipi.it}

\keywords{locally conformally symplectic, symplectic cohomologies, non-K\"ahler geometry}
\thanks{The first author is supported by the SIR2014 project RBSI14DYEB ``Analytic aspects in complex and hypercomplex geometry'', by ICUB Fellowship for Visiting Professor, and by GNSAGA of INdAM.
The third author is supported by Project PRIN ``Varietà reali e complesse: geometria, topologia e analisi armonica'' and by GNSAGA of INdAM}
\subjclass[2010]{32Q99, 53A30, 32C35}

\date{\today}

\begin{document}

\dedicatory{
Dedicated to Professor Paolo Piccinni on the occasion of his 65th birthday.\\
Buon compleanno!
}

\begin{abstract}
We study the Morse-Novikov cohomology and its almost-symplectic counterpart on manifolds admitting locally conformally symplectic structures.
More precisely, we introduce lcs cohomologies and we study elliptic Hodge theory, dualities, Hard Lefschetz Condition.
We consider solvmanifolds and Oeljeklaus-Toma manifolds. In particular, we prove that Oeljeklaus-Toma manifolds with precisely one complex place, and under an additional arithmetic condition, satisfy the Mostow property. This holds in particular for the Inoue surface of type $S^0$.

\end{abstract}

\maketitle

\section*{Introduction}
On a compact differentiable manifold $X$, {\em flat line bundles} (namely, local systems of $1$-dimensional $\C$-vector spaces,) are determined by the associated monodromy homomorphism $\pi_1(X,x)\to \C^\times$, which can be viewed as a cohomology class $[\vartheta] \in H^1(X;\C)$. Consider the twisted differential $d_\vartheta:=d-\vartheta\wedge\sspace$, that is the exterior derivative perturbed by a closed $1$-form $\vartheta$. The cohomology of the perturbed complex $(\wedge^\bullet X, d_\vartheta)$ is called {\em Morse-Novikov cohomology} $H^\bullet_{\vartheta}(X)$ \cite{novikov-1, novikov-2, guedira-lichnerowicz} of $X$ with respect to $\vartheta$, and it depends just on $[\vartheta]\in H^1(X;\R)$ up to gauge equivalence. It may provide informations on the manifold itself. See {\itshape e.g.} the explicit computations on Inoue surfaces in \cite{otiman-2}, where the Morse-Novikov cohomology allows to distinguish between Inoue surfaces of type $S^+$ and $S^-$, even if they have the same Betti numbers. So, it may be useful to understand the cohomology $H^\bullet_{\vartheta}(X)$ varying $[\vartheta]\in H^1(X;\R)$; in particular one can study, for example, $H^\bullet_{k\cdot \vartheta}(X)$ varying $k\in\R$ for a fixed $[\vartheta]\in H^1(X;\R)$.

In the holomorphic category, twisted differentials have been studied in \cite{kasuya-imrn}, see also \cite{angella-kasuya-4}. In particular, H. Kasuya gives in \cite[Theorem 1.7]{kasuya-imrn} a structure theorem for K\"ahler solvmanifolds in terms of strong-Hodge-decomposition with respect to {\em any} perturbation of the differentials, which he calls {\em hyper-strong-Hodge-decomposition}. This result yields a Hodge-theoretical proof of the Arapura theorem characterizing solvmanifolds in class $\mathcal{C}$ of Fujiki, see \cite[Theorem 3.3]{angella-kasuya-4}.

The twisted differential $d_\vartheta$ has also a geometric description. In fact, by the Poincaré Lemma, closed $1$-forms correspond to local conformal changes. So, for example, for an almost-symplectic form $\Omega$, (that is, a non-degenerate $2$-form,) the {\em locally conformal symplectic} condition corresponds to $d_\vartheta\Omega=0$ for some closed {\em Lee form} $\vartheta$, while the symplectic condition corresponds to $d\Omega=0$, that is the case $\vartheta=0$.

In this note, we consider locally conformal symplectic (say, lcs) structures. We take their associated closed Lee forms as natural twists for the differential, --- in the spirit of the equivariant point of view introduced in \cite{gini-ornea-parton-piccinni}.
We introduce and study cohomologies in the lcs setting as analogues of the Tseng and Yau symplectic cohomologies \cite{tseng-yau-1, tseng-yau-2}. We develop here the algebraic aspects arising from a structure of bi-differential vector space, while H. V. Le and J. Van\u{z}ura study primitive cohomology groups in \cite{le-vanzura}.
(See also \cite{angella-kasuya-3}, where symplectic cohomologies and symplectic cohomologies with values in a local system are studied, with focus on solvmanifolds.)

More precisely, under the inspiration of \cite{brylinski, yan}, we start by looking at the commutation between the twisted differential $d_\vartheta$ by the Lee form and the $\mathfrak{sl}(2;\R)$-representation operators associated to the lcs (almost-symplectic is enough) form $\Omega$, namely, $L:=\Omega\wedge \sspace$ and $\Lambda:=-\iota_{\Omega^{-1}}$ and $H=[L,\Lambda]$. It is clear that $d_\vartheta L=Ld+d_\vartheta\Omega$; so, the lcs condition $d_\vartheta\Omega=0$ assures that $d_\vartheta L=Ld$. Moreover, the commutation between $d_\vartheta$ and $\Lambda$ was computed in \cite[Proposition 2.8]{angella-ugarte-1}, and once again it gives a change of the twist but still in the same line; see also \cite[Section 2]{le-vanzura}. Both these results suggest to look not only at the twist $[\vartheta]$, but also at $k\cdot[\vartheta]$ varying $k\in\R$.
Moreover, in the spirit of the Novikov inequalities, which link the number of zeroes of a closed $1$-forms of Morse-type with the dimension of the Morse-Novikov cohomology, note that $\vartheta$ and $k\cdot\vartheta$ have the same zeroes when $k\in\R\setminus\{0\}$. For large $k$, interesting phenomena occour: {\itshape e.g.} if $\vartheta$ is not exact, then $k\cdot \vartheta$ is the Lee forms of a lcs structure \cite{eliashverg-murphy}; if $\vartheta$ is nowhere vanishing, then the Morse-Novikov cohomology with respect to $k\cdot \vartheta$ vanishes \cite{pajitnov}.

This is our motivation to define a bi-differential graded vector space associated to $(k+\Z)\cdot[\vartheta]$, see Lemma \ref{lemma:double-cplx}.
Once we have this bi-differential vector space structure, we investigate its associated cohomologies: other than the Morse-Novikov cohomology and its lcs-dual, we have {\em lcs-Bott-Chern and Aeppli} cohomologies. Following the same pattern as \cite{brylinski, mathieu, yan, merkulov, guillemin, cavalcanti, tseng-yau-1}, we study elliptic-Hodge-theory, and we get some results concerning Poincaré dualities, see Proposition \ref{prop:poincare} and Theorem \ref{thm:poincare-ABC}, and Hard Lefschetz Condition, see Theorem \ref{thm:lcs-HLC} and Theorem \ref{thm:hlc-lcslemma}. Finally, we study some explicit examples, on nilmanifolds (Kodaira-Thurston surface \cite{kodaira-surfaces-1, thurston}) and {\em solvmanifolds} (Inoue surfaces of type $S^+$ \cite{inoue}, for which see also \cite{otiman-2}, and Oeljeklaus-Toma manifolds \cite{oeljeklaus-toma}).

For compact quotients of connected simply-connected completely solvable Lie groups, the Hattori theorem \cite[Corollary 4.2]{hattori} allows to reduce the computation of the Morse-Novikov cohomology at the linear level of the Lie algebra, and the same holds for lcs cohomologies, see Lemma \ref{lem:inv-lcs-cohom}.
In general, for a solvmanifold $\Gamma\backslash G$ which is not completely solvable, there is no reason of having $H^{*}(\mathfrak{g}) \simeq H^{*}(\Gamma\backslash G)$. One situation when this happens is when the solvmanifold satisfies the {\em Mostow condition} \cite{mostow}. We prove this condition suffices also for the lcs cohomologies with respect to an invariant closed one-form, see Proposition \ref{prop:mostow}. The case of Inoue surfaces is interesting because two subclasses, $S^{\pm}$, are completely-solvable, falling thus under the scope of the Hattori theorem; however this is not the case of the subclass $S^0$. In \cite{otiman-2}, the computations of the cohomology are done without using the structure of solvmanifold, but instead with a "twisted" version of the Mayer-Vietoris sequence. We prove here that Inoue surfaces of type $S^0$ and, more in general, certain Oeljeklaus-Toma manifolds with precisely one complex place satisfy the Mostow condition, see Proposition \ref{prop:mostow-inoue} and Theorem \ref{thm:mostow-ot} respectively. More precisely, here we have to assume an arithmetic condition on the associated number field, namely, that there is no totally real intermediate extension. This holds for example for the Inoue surface of type $S^0$, that is, in the case $(s,t)=(1,1)$, see also Proposition \ref{prop:mostow-inoue}. As we show in Proposition \ref{prop:example-mostow}, for any $s$ there exists an Oeljeklaus-Toma manifold of type $(s,1)$ satisfying such a property.

\bigskip

\noindent{\sl Acknowledgments.}
The authors would like to thank Giovanni Bazzoni, Liviu Ornea, Luis Ugarte, Victor Vuletescu,  for interesting discussions.
The first-named and the third-named authors would like to thank also Adriano Tomassini for his constant support and encouragement and for useful discussions. The second-named author is also grateful for constructive discussions to Andrei Sipo\c s and Miron Stanciu and would like to thank Liviu Ornea for his constant guidance. Part of this work has been done during the stay of the first-named author at Universitatea din Bucure\c{s}ti with the support of an ICUB Fellowship: he would like to thank Liviu Ornea and Victor Vuletescu for the invitation, and the whole Department for the warm hospitality.

\section{Bi-differential graded vector space for lcs structures}

Let $X$ be a compact differentiable manifold endowed with a {\em locally conformal symplectic form} $\Omega$ with {\em Lee form} $\vartheta$, namely: $\Omega$ is an almost-symplectic form ({\itshape i.e.} a non-degenerate $2$-form) such that
$$ d\Omega - \vartheta\wedge\Omega \;=\; 0 \qquad \text{ with } \qquad d\vartheta \;=\; 0 \;. $$

We set
$$L \;:=\;\Omega\wedge\sspace \qquad \text{ and } \qquad \Lambda\;:=\;-\iota_{\Omega^{-1}} \;,$$
where $\iota$ denotes the contraction.
Read $\Lambda=-L^\star=-\star^{-1}L\star$, up to a sign, as the symplectic adjoint of $L$, namely, the dual of $L$ with respect to the $L^2$-pairing induced by the almost-symplectic form $\Omega$.
Recall that, $L$ and $\Lambda$ together with
$$H\;:=\;[L,\Lambda] \;,$$
yield an $\mathfrak{sl}(2;\R)$-representation on $\wedge^\bullet X$, see \cite[Corollary 1.6]{yan}, see also \cite[Corollary 2.4]{le-vanzura} quoting \cite[Section 1]{lycagin}.

For $k\in\R$, we consider the following operators, compare \cite[Section 2]{le-vanzura}:
\begin{eqnarray*}
d_k &:=& d_{k\vartheta} \;:=\; d-(k\vartheta)\wedge\sspace \colon \wedge^\bullet X \to \wedge^{\bullet+1}X \;, \\[5pt]
\delta_k &:=& d_{k-1}\Lambda - \Lambda d_{k} \colon \wedge^{\bullet}X\to\wedge^{\bullet-1}X \;.
\end{eqnarray*}
By a straightforward computation, the Leibniz rule for $d_k$ reads as:
$$ d_k(\alpha\wedge\beta) = d_{k-h} \alpha \wedge\beta+(-1)^{\deg\alpha}\alpha\wedge d_h\beta , $$
for $h\in\R$, see \cite[Lemma 2.1]{le-vanzura}.
We also notice that, if we change $\vartheta$ by $\vartheta+df$, then the lcs structure $\Omega$ with Lee form $\vartheta$ yields the lcs structure $\exp(f)\Omega$ with respect to the Lee form $\vartheta+df$, and the above operators change as follows:
\begin{eqnarray}
d_{k(\vartheta+df)} &=& \exp(kf)\, d_{k\vartheta}\, \exp(-kf) , \label{eq:gauge-d} \\[5pt]
\delta_{k(\vartheta+df)} &=& \exp((k-1)f)\, \delta_{k\vartheta}\, \exp(-kf) . \label{eq:gauge-delta}
\end{eqnarray}

\begin{rmk}
Note that, in \cite{le-vanzura}, the sign of $\vartheta$ is chosen opposite: $d^{\text{LV}}_k:=d+k\vartheta\wedge\sspace$. Therefore we have $d_{k}^{\text{LV}}=d_{-k}$. Their second operator is $\delta^{\text{LV}}_k\lfloor_{\wedge^h X}:=(-1)^h\star d_{n+k-h}^{\text{LV}}\star$, \cite[Equation (2.11)]{le-vanzura}, that is, $\delta^{\text{LV}}_k\lfloor_{\wedge^hX}=\delta_{2n+k-2h}$, as follows by the formulas \eqref{eq:dk-star} and \eqref{eq:delta-dual-d} below.
Moreover, as for $\Lambda$, the notation in our note differs from \cite{le-vanzura} up to a sign.
\end{rmk}

In order to give a different interpretation of $\delta_k$,
we need some preliminaries.
Recall that, once fixed any almost-complex structure $J$ on $X$, one defines $d_k^c:=J^{-1}d_kJ$.
Denoting with $*$ the Hodge-$*$-operator associated to
a fixed $J$-Hermitian metric $g$ on $X$,
the formula for the adjoint of $d_k$, respectively $d^c_k$, with respect to the $L^2$-pairing induced by $g$ is $d_k^*=-*d_{-k}*$, respectively $(d^c_k)^*=-*d^c_{-k} *$.
Moreover, we can also consider the $L^2$-pairing induced by the almost-symplectic structure $\Omega$, whence the symplectic Hodge-$\star$-operator in \cite[Section 2]{brylinski}. The analogue formulas for the adjoint in the symplectic context are $d_k^\star\lfloor_{\wedge^hX}=(-1)^h\star d_{-k}\star$, and $(d^c_k)^\star\lfloor_{\wedge^hX}=(-1)^h\star d^c_{-k} \star$. (Recall that $*^2\lfloor_{\wedge^h X}=(-1)^h\cdot\mathrm{id}$ and $\star^2=\mathrm{id}$.)
Finally, recall that: if $J$ is an almost-complex structure compatible with the almost-symplectic form $\Omega$, once set $g:=\Omega(\sspace,J\ssspace)$ the corresponding $J$-Hermitian metric, (that is, $(g,J,\Omega)$ is an almost-Hermitian structure,) then we have the relation $\star=J*$ \cite[Corollary 2.4.3]{brylinski}. Therefore, we get
\begin{equation}\label{eq:dk-star}
d_k^\star\lfloor_{\wedge^hX}\;=\;(-1)^h\star d_{-k}\star\;=\;-*d_{-k}^c*\;=\;(d_k^c)^* \;.
\end{equation}
We have the following.

\begin{lem}[{\cite[Proposition 2.8]{angella-ugarte-1}}]
 Let $X$ be a compact differentiable manifold of dimension $2n$, endowed with a locally conformal symplectic form $\Omega$ with Lee form $\vartheta$.
 Consider an almost-complex structure compatible with $\Omega$, and the associated Hermitian metric.
 Then
\begin{equation}\label{eq:delta-dual-d}
\left. \delta_k \right.\lfloor_{\wedge^hX} \;=\; d_{-(n+k-h)}^{\star} \;=\; (d^c_{-(n+k-h)})^* \;.
\end{equation}
\end{lem}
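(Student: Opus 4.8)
The plan is to treat the two equalities separately. The second equality, $d_{-(n+k-h)}^{\star}=(d^c_{-(n+k-h)})^*$, is immediate: it is precisely \eqref{eq:dk-star} with the parameter $k$ replaced by $-(n+k-h)$ and the degree taken to be $h$, which also rewrites the common value as $(-1)^h\star d_{n+k-h}\star$. Hence everything reduces to the first equality, equivalently to the operator identity $\delta_k\lfloor_{\wedge^hX}=(-1)^h\star d_{n+k-h}\star$.

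Since this is an identity of (first-order) differential operators, I would verify it locally. Near any point the Poincar\'e Lemma yields $f$ with $\vartheta=df$, so that $\omega:=\exp(-f)\,\Omega$ is a genuine symplectic form and, by \eqref{eq:gauge-d}, $d_k=\exp(kf)\,d\,\exp(-kf)$. The mechanism that makes the computation collapse is that $L$, $\Lambda$ and the symplectic star $\star$ are tensorial, hence commute with multiplication by functions, while under the conformal change they pick up $\exp(f)$-weights dictated by the $\mathfrak{sl}(2;\R)$-grading: one has $\Lambda=\exp(-f)\,\Lambda_\omega$ and $\star\lfloor_{\wedge^hX}=\exp((n-h)f)\,\star_\omega$, where $\Lambda_\omega,\star_\omega$ are the operators attached to $\omega$.

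Substituting $d_k=\exp(kf)d\exp(-kf)$ and $\Lambda=\exp(-f)\Lambda_\omega$ into $\delta_k=d_{k-1}\Lambda-\Lambda d_k$ and letting the exponentials commute through the tensorial $\Lambda_\omega$, the two terms fuse into a single conjugate of the untwisted symplectic codifferential, $\delta_k=\exp((k-1)f)\,[d,\Lambda_\omega]\,\exp(-kf)$. At this point I would invoke the untwisted Brylinski--Yan identity for $\omega$, namely $[d,\Lambda_\omega]\lfloor_{\wedge^hX}=(-1)^h\star_\omega d\star_\omega$ (the $\vartheta=0$ case, see \cite{brylinski,yan}), express the two occurrences of $\star_\omega$ back in terms of $\star$ through the weight rescaling, and push the exponentials through $d$. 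The term produced by $d$ differentiating $\exp((h-n-k)f)$ contributes exactly $(h-n-k)\vartheta\wedge\sspace$, i.e. it upgrades $d$ to $d_{n+k-h}$, while the three exponents $(k-1)$, $(n-h+1)$ and $(h-n-k)$ collected from the prefactor, the outer star (applied in degree $2n-h+1$) and the inner star (applied in degree $h$) sum to zero. This leaves precisely $\delta_k\lfloor_{\wedge^hX}=(-1)^h\star d_{n+k-h}\star$, as wanted.

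The main obstacle is the weight bookkeeping: one must apply the star rescaling $\star_\omega=\exp((h-n)f)\star$ at the two different degrees $h$ and $2n-h+1$, and check that the resulting exponents cancel against the gauge factors $\exp(\pm kf)$ and against the derivative of the conformal factor, so that the shifted parameter $n+k-h$ (and not merely $k$) is the one that emerges. One must also pin down the sign of the untwisted identity in the convention fixed by \eqref{eq:dk-star}, which is what forces $(-1)^h$ rather than $(-1)^{h+1}$. An alternative, more computational route avoids the local conformal change by expanding $\delta_k=[d,\Lambda]+\vartheta\wedge\Lambda+k[\Lambda,\vartheta\wedge\sspace]$ and evaluating the algebraic commutator $[\Lambda,\vartheta\wedge\sspace]$ via the symplectic K\"ahler identities, but the conformal reduction keeps the degree-weight bookkeeping the most transparent.
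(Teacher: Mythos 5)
Your argument is correct, but note that the paper does not actually prove this lemma: it is imported verbatim from \cite[Proposition 2.8]{angella-ugarte-1}, so there is no internal proof to compare against, and what you supply is a self-contained derivation. It holds up. The second equality is indeed just \eqref{eq:dk-star} evaluated at the parameter $-(n+k-h)$. For the first, your reduction is sound: both $\delta_k$ and $(-1)^h\star d_{n+k-h}\star$ are local first-order operators, so it suffices to work on a chart where $\vartheta=df$; there the weights $\Lambda=\exp(-f)\Lambda_\omega$ and $\star\lfloor_{\wedge^hX}=\exp((n-h)f)\star_\omega$ are the right ones, the fusion $\delta_k=\exp((k-1)f)\,[d,\Lambda_\omega]\,\exp(-kf)$ is exactly the paper's own gauge identity \eqref{eq:gauge-delta} read backwards, and the exponent count $(k-1)+(-k)+(h-n)+(n-h+1)=0$ together with the derivative of $\exp((h-n-k)f)$ producing the shift $d\mapsto d_{n+k-h}$ gives precisely \eqref{eq:delta-dual-d}. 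The one point you rightly flag --- the sign in the untwisted identity $[d,\Lambda_\omega]\lfloor_{\wedge^hX}=(-1)^h\star_\omega d\star_\omega$ --- genuinely needs care, because $\Lambda:=-\iota_{\Omega^{-1}}$ here carries a sign opposite to several standard references (the paper itself warns that its $\Lambda$ differs from \cite{le-vanzura} by a sign, and some sources state the Brylinski--Yan formula with $(-1)^{h+1}$); the input must be taken from \cite{brylinski,yan} with the sign adjusted to this convention, for which the $\vartheta=0$ case of \eqref{eq:delta-dual-d} serves as a consistency check rather than a justification. Compared with the paper's bare citation, your conformal-reduction argument has the merit of making transparent why the shifted weight $n+k-h$ appears: it is forced by combining the gauge weight of $d_k$ with the degree-dependent weight of the symplectic star.
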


We have
\begin{lem}\label{lemma:double-cplx}
 Let $X$ be a compact differentiable manifold of dimension $2n$, and let $\vartheta$ be a $d$-closed $1$-form. Assume that there is a locally conformal symplectic form $\Omega$ with Lee form $\vartheta$.
 Then, for any fixed $k\in\R$, the diagram
\begin{equation}\label{eq:double-cplx} 
\xymatrix{
         & \vdots   & \vdots        & \vdots        &        \\
  \cdots \ar[r] & \wedge^{h-2}X \ar[r]^{d_{k-1}} \ar[u] & \wedge^{h-1}X \ar[r]^{d_{k-1}} \ar[u]& \wedge^{h  }X \ar[u] \ar[r] & \cdots   \\
  \cdots \ar[r] & \wedge^{h-1}X \ar[r]^{d_k} \ar[u]^{\delta_k} & \wedge^{h}  X \ar[r]^{d_k} \ar[u]^{\delta_k} & \wedge^{h+1}X \ar[r] \ar[u]^{\delta_{k}} & \cdots \\
  \cdots \ar[r] & \wedge^hX \ar[r]^{d_{k+1}} \ar[u]^{\delta_{k+1}}   & \wedge^{h+1}X \ar[u]^{\delta_{k+1}} \ar[r]^{d_{k+1}} & \wedge^{h+2}X \ar[u]^{\delta_{k+1}} \ar[r] & \cdots  \\
         & \vdots\ar[u]    & \vdots        \ar[u] & \vdots \ar[u]    &        \\
 }
\end{equation}
represents a $\Z$-graded bi-differential vector space.
\end{lem}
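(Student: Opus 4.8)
The plan is to verify that the horizontal family $\{d_m\}$ and the vertical family $\{\delta_m\}$ appearing in \eqref{eq:double-cplx} satisfy the three relations defining a bi-differential graded vector space, which one reads off directly from the diagram: for all $m$,
\begin{equation*}
d_m^2 \;=\; 0, \qquad \delta_m\,\delta_{m+1} \;=\; 0, \qquad d_{m-1}\,\delta_m + \delta_m\, d_m \;=\; 0 .
\end{equation*}
The first says that each row is a complex, the second that the upward arrows compose to zero, and the third that every elementary square anticommutes; together they guarantee that the total operator $d_\bullet+\delta_\bullet$ squares to zero.

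First I would dispatch the two relations that need no symplectic input. Expanding $d_m = d-m\,\vartheta\wedge\sspace$ and using $d^2=0$, $\vartheta\wedge\vartheta=0$, and crucially $d\vartheta=0$, the cross terms cancel, so $d_m^2=0$. For the anticommutation I would simply insert the definition $\delta_m = d_{m-1}\Lambda - \Lambda d_m$ and use $d_{m-1}^2=d_m^2=0$:
\begin{align*}
d_{m-1}\delta_m &\;=\; d_{m-1}^2\,\Lambda - d_{m-1}\Lambda d_m \;=\; -\,d_{m-1}\Lambda d_m ,\\
\delta_m d_m &\;=\; d_{m-1}\Lambda d_m - \Lambda\, d_m^2 \;=\; d_{m-1}\Lambda d_m ,
\end{align*}
so the two contributions cancel. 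Thus the third relation is a purely formal consequence of $d^2=0$ and the very definition of $\delta$.

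The main obstacle is the middle relation $\delta_m\delta_{m+1}=0$: it is the twisted analogue of the classical $(d^\Lambda)^2=0$ and, exactly as there, is \emph{not} formal but genuinely uses the $\mathfrak{sl}(2;\R)$-structure. Here I would exploit the symplectic-Hodge description of $\delta_k$ furnished by the preceding Lemma (\cite[Proposition 2.8]{angella-ugarte-1}) together with \eqref{eq:dk-star}, namely $\delta_k\lfloor_{\wedge^hX} = (-1)^h\star d_{n+k-h}\star$. Composing the two instances on $\wedge^hX$ and cancelling the inner pair via $\star^2=\mathrm{id}$ yields
\begin{equation*}
\delta_k\,\delta_{k+1}\lfloor_{\wedge^hX} \;=\; (-1)^{2h-1}\,\star\, d_{n+k-h+1}\,d_{n+k+1-h}\,\star .
\end{equation*}
The crux is that the two twist indices coincide, $n+k-h+1=n+k+1-h$, so the middle factor equals $d_m^2$ with $m=n+k+1-h$ and vanishes by the first relation; hence $\delta_k\delta_{k+1}=0$.

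Finally, since all three identities are equalities of local differential operators, it suffices to check them on a cover, which also suggests a uniform alternative route: on a chart where $\vartheta=dg$, the gauge formulas \eqref{eq:gauge-d}--\eqref{eq:gauge-delta} give $d_k = \exp(kg)\,d\,\exp(-kg)$ and $\delta_k = \exp((k-1)g)\,d^{\Lambda_0}\,\exp(-kg)$ for the local symplectic form $\Omega_0=\exp(-g)\,\Omega$; the conjugating exponentials then telescope and the three relations reduce verbatim to the Brylinski identities $d^2 = (d^{\Lambda_0})^2 = d\,d^{\Lambda_0}+d^{\Lambda_0}d = 0$ (see \cite{brylinski}). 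Either route shows that \eqref{eq:double-cplx} is a $\Z$-graded bi-differential vector space.
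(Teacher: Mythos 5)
Your proof is correct and, in its main line, is essentially the paper's: the first and third relations are handled by the same formal expansions (the paper records the slightly more general identity $d_k d_\ell = (\ell-k)\,\vartheta\wedge\sspace$, of which $d_k^2=0$ is the diagonal case), and for $\delta_k\delta_{k+1}=0$ you conjugate by the symplectic star where the paper conjugates by $*$ and $J$ via $(d^c_m)^*$ --- since $\star=J*$ these are the same computation, both hinging on the coincidence of the two twist indices $n+k-h+1=n+k+1-h$ so that the middle factor is an honest square $d_m^2$. The one genuinely different ingredient is your closing local-gauge argument: since all three identities are equalities of differential operators, it suffices to verify them on charts where $\vartheta=dg$, and there the conjugating exponentials coming from \eqref{eq:gauge-d}--\eqref{eq:gauge-delta} telescope, reducing everything to Brylinski's untwisted identities for the local symplectic form $\Omega_0=\exp(-g)\Omega$. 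The paper does not take this route; it is a clean conceptual alternative that makes transparent why no new analytic input is needed beyond the symplectic case, at the mild cost of invoking the gauge-equivariance of $d_k$ and $\delta_k$ under conformal rescaling rather than performing a single global computation.
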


\begin{proof}
We have to prove that:
$$ (d_k)^2 \;=\; 0 \;, \qquad \delta_k \delta_{k+1} \;=\; 0 \;, \qquad d_{k-1}\delta_{k}+\delta_{k}d_k \;=\; 0 \;. $$
\begin{itemize}
 \item More in general, by straightforward computations, we notice that
 $$ d_k d_\ell \;=\; (\ell-k)\vartheta\wedge \sspace \;. $$
 \item Let $J$ be an almost-complex structure compatible with the almost-symplectic structure $\Omega$, and let $g$ be the associated $J$-Hermitian metric. We compute:
 \begin{eqnarray*}
  \lefteqn{ \left. (\delta_k \delta_{k+1}) \right\lfloor_{\wedge^h X} } \\[5pt]
  &=& (d^c_{-(n+k-(h-1))})^*(d^c_{-(n+(k+1)-h)})^* \\[5pt]
  &=& * J^{-1} d_{n+k-h+1} J * * J^{-1} d_{n+k-h+1} J * \\[5pt]
  &=& (-1)^{h+1} * J^{-1} d_{n+k-h+1} d_{n+k-h+1} J * \;=\; 0 \;.
 \end{eqnarray*}
 The third equality follows from the fact that $*^2\lfloor_{\wedge^hX}=(-1)^h$;
 the last one follows by the previous point of the proof.
 \item We compute:
 \begin{eqnarray*}
  \lefteqn{ d_{k-1}\delta_k+\delta_kd_k } \\[5pt]
  &=& d_{k-1}d_{k-1}\Lambda-d_{k-1}\Lambda d_k+d_{k-1}\Lambda d_k-\Lambda d_k d_k
  \;=\; 0 \;.
 \end{eqnarray*}
\end{itemize}
This completes the proof.
\end{proof}

\section{Cohomologies for lcs structures}

Let $X$ be a compact differentiable manifold, and let $\vartheta$ be a $d$-closed $1$-form. Assume that there exits a locally conformal symplectic form $\Omega$ on $X$ with Lee form $\vartheta$, namely, $\Omega$ is a non-degenerate $2$-form such that $d_{\vartheta}\Omega=0$. Fix $k\in\R$. Once given the bi-differential $\Z$-graded vector space in the Lemma \ref{lemma:double-cplx}, we can define the following cohomologies:
\begin{eqnarray*}
 H^{\bullet}_{d_{k}}(X) \;:=\; \frac{\ker d_k}{\imm d_k} \;,
 & \qquad &
 H^{\bullet}_{\delta_k}(X) \;:=\; \frac{\ker \delta_k}{\imm \delta_{k+1}} \;,
 \\[5pt]
 H^{\bullet}_{d_k+\delta_{k}}(X) \;:=\; \frac{\ker d_k \cap \ker \delta_k}{\imm \delta_{k+1}d_{k+1}} \;,
 & \qquad &
 H^{\bullet}_{\delta_kd_k}(X) \;:=\; \frac{\ker \delta_kd_k}{\imm d_k + \imm \delta_{k+1}} \;.
\end{eqnarray*}
We call $H^{\bullet}_{d_k+\delta_{k}}(X)$ the \emph{lcs-Bott-Chern cohomology} of weight $k$ of $X$, and $H^{\bullet}_{\delta_kd_k}(X)$ the \emph{lcs-Aeppli cohomology} of weight $k$ of $X$.
Note that, thanks to \eqref{eq:gauge-d} and \eqref{eq:gauge-delta}, the above cohomologies depend just on $[\vartheta]\in H^1_{dR}(X;\R)$, up to gauge equivalence.

The identity induces natural maps of $\Z$-graded vector spaces:
\begin{equation}\label{eq:coom}
\xymatrix{
 & H^{\bullet}_{d_k+\delta_k}(X) \ar[dl] \ar[dr] \ar[dd] & \\
 H^{\bullet}_{d_{k}}(X) \ar[rd] & & H^{\bullet}_{\delta_{k}}(X) \ar[ld] \\
 & H^{\bullet}_{\delta_kd_{k}}(X) &
}
\end{equation}

By definition, we say that $X$ {\em satisfies the $\delta_kd_k$-Lemma} if the natural map $H^\bullet_{d_k+\delta_k}(X) \to H^\bullet_{\delta_k d_k}(X)$ induced by the identity is injective.
We say that $X$ {\em satisfies the lcs-Lemma} if it satisfies the $\delta_kd_k$-Lemma for any $k\in\R$. In this case, all the above maps are isomorphisms, see \cite[Lemma 5.15]{deligne-griffiths-morgan-sullivan}, adapted in \cite[Lemma 1.4]{angella-tomassini-JNC} to the $\Z$-graded case.

\medskip

\begin{rmk}[Comparison with Tseng and Yau's symplectic cohomologies]
 In the case $\vartheta=0$, the lcs form $\Omega$ with Lee form $\vartheta$ is in fact symplectic.
 In \cite{tseng-yau-1, tseng-yau-2}, Tseng and Yau introduce and study the Bott-Chern and the Aeppli cohomologies for symplectic manifolds, defined as
 $$ H^{\bullet}_{d+d^\Lambda}(X) \;:=\; \frac{\ker d \cap \ker d^\Lambda}{\imm d d^\Lambda} \qquad \text{ and } \qquad H^{\bullet}_{dd^\Lambda}(X) \;:=\; \frac{\ker dd^\Lambda}{\imm d + \imm d^\Lambda} \;, $$
 where $d^\Lambda := [d,\Lambda]$.
 In case $\vartheta=0$, notice that, for any $k\in\R$, one has $d_k=d$ and $\delta_k=d^\Lambda$, whence
 $$ H^{\bullet}_{d_k+\delta_k}(X) \;=\; H^\bullet_{d+d^\Lambda}(X) \;, \qquad
 H^{\bullet}_{\delta_kd_k}(X) \;=\; H^\bullet_{dd^\Lambda}(X) \;. $$
This means that the lcs-cohomologies defined above coincide with the ones
 defined by Tseng and Yau in the symplectic case.
In particular, $X$ satisfies the $\delta_kd_k$-Lemma for some $k$ if and only if it satisfies the lcs-Lemma if and only if the symplectic structure satisfies the Hard Lefschetz Condition, see \cite[Proposition 3.13]{tseng-yau-1} and the references therein.
\end{rmk}

\subsection{Elliptic Hodge theory for lcs cohomologies}
As before, consider an almost-complex structure $J$ compatible with the almost-symplectic form $\Omega$, and let $g:=\Omega(\sspace,J\ssspace)$ be the corresponding $J$-Hermitian metric. Fix $k\in\R$.
We consider the adjoint operators
$$ d_k^* \;=\; - * d_{-k} * \;, \qquad \left. \delta_k^* \right\lfloor_{\wedge^h X} \;=\; d^c_{-(n+k-h)} \;, $$
of $d_k$, respectively $\delta_k$, with respect to the $L^2$-pairing induced by $g$.

We follow \cite{kodaira-spencer-3, schweitzer, tseng-yau-1}, and we define the following operators, see also \cite{guedira-lichnerowicz} for the Morse-Novikov cohomology:
\begin{eqnarray*}
 \Delta_{d_k} &:=& d_kd_k^*+d_k^*d_k \;, \\[5pt]
 \Delta_{\delta_k} &:=& \delta_k^*\delta_k+\delta_{k+1}\delta_{k+1}^* \;, \\[5pt]
 \Delta_{d_k+\delta_k} &:=& d_k^*d_k+\delta_k^*\delta_k+ (\delta_{k+1}d_{k+1})(\delta_{k+1}d_{k+1})^*+(\delta_{k}d_{k})^*(\delta_{k}d_{k}) \\[5pt]
 && +(d_k^*\delta_{k+1})(d_k^*\delta_{k+1})^*+(d_{k-1}^*\delta_k)^*(d_{k-1}^*\delta_k) \;, \\[5pt]
 \Delta_{\delta_kd_k} &:=& d_kd_k^*+\delta_{k+1}\delta_{k+1}^*+(\delta_kd_k)^*(\delta_kd_k)+(\delta_{k+1}d_{k+1})(\delta_{k+1}d_{k+1})^* \\[5pt]
 && +(\delta_{k+1}d_{k+1}^*)(\delta_{k+1}d_{k+1}^*)^*+(d_k\delta_{k}^*)(d_k\delta_{k}^*)^* \;.
\end{eqnarray*}

\begin{prop}\label{prop:hodge-isom}
 Let $X$ be a compact differentiable manifold of dimension $2n$, and let $\vartheta$ be a $d$-closed $1$-form. Assume that there is a locally conformal symplectic form $\Omega$ with Lee form $\vartheta$.
Fix an almost-complex structure $J$ compatible with $\Omega$, and let $g$ be the corresponding $J$-Hermitian metric.
Fix $k\in\R$. Then:
 \begin{enumerate}
  \item[(i)] the operators $\Delta_{d_k}$, $\Delta_{\delta_k}$, $\Delta_{d_k+\delta_k}$, $\Delta_{\delta_kd_k}$ are differential self-adjoint elliptic operators;
  \item[(ii)] the following Hodge decompositions hold:
  \begin{eqnarray*}
   \wedge^\bullet X &=& \ker \Delta_{d_k} \oplus \imm \Delta_{d_k} \;, \\[5pt]
   \wedge^\bullet X &=& \ker \Delta_{\delta_k} \oplus \imm \Delta_{\delta_k} \;, \\[5pt]
   \wedge^\bullet X &=& \ker \Delta_{d_k+\delta_k} \oplus \imm \Delta_{d_k+\delta_k} \;, \\[5pt]
   \wedge^\bullet X &=& \ker \Delta_{\delta_kd_k} \oplus \imm \Delta_{\delta_kd_k} \;;
  \end{eqnarray*}
  \item[(iii)] the following isomorphisms hold:
  \begin{eqnarray*}
   \ker\Delta_{d_k} & \stackrel{\simeq}{\to} & H^\bullet_{d_k}(X) \;, \\[5pt]
   \ker\Delta_{\delta_k} & \stackrel{\simeq}{\to} & H^\bullet_{\delta_k}(X) \;, \\[5pt]
   \ker\Delta_{d_k+\delta_k} & \stackrel{\simeq}{\to} & H^\bullet_{d_k+\delta_k}(X) \;, \\[5pt]
   \ker\Delta_{\delta_kd_k} & \stackrel{\simeq}{\to} & H^\bullet_{\delta_kd_k}(X) \;;
  \end{eqnarray*}
  \item[(iv)] in particular, the lcs-cohomologies $H^{\bullet}_{d_k}(X)$, $H^{\bullet}_{\delta_k}(X)$, $H^{\bullet}_{d_k+\delta_k}(X)$, $H^{\bullet}_{\delta_kd_k}(X)$ have finite dimension.
 \end{enumerate}
\end{prop}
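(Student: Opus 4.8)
The plan is to run the standard elliptic-Hodge package for a bi-differential complex --- in the Bott-Chern/Aeppli form of Schweitzer \cite{schweitzer} and its symplectic incarnation by Tseng and Yau \cite{tseng-yau-1} --- and to absorb the only genuinely new feature, the twist $k\vartheta$, at the level of principal symbols. The differential nature of the four operators is immediate, since each is a polynomial in $d_k$, $\delta_k$ and their formal adjoints, all of which are differential operators. Self-adjointness and non-negativity are equally immediate from the way the operators are assembled: every summand has the shape $A^*A$ or $AA^*$, hence is self-adjoint and non-negative, and so are the sums $\Delta_{d_k}$, $\Delta_{\delta_k}$, $\Delta_{d_k+\delta_k}$, $\Delta_{\delta_kd_k}$. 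This settles the non-analytic part of (i).

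The crux of (i) is ellipticity, and this is the step I expect to be the main obstacle. For the second-order operators $\Delta_{d_k}$ and $\Delta_{\delta_k}$ the situation is benign: each is Laplacian-type, with principal symbol positive definite for $\xi\neq 0$ exactly as for the de Rham Laplacian (here one uses that $\delta_k$ and $\delta_{k+1}$ share the same principal symbol). The fourth-order operators $\Delta_{d_k+\delta_k}$ and $\Delta_{\delta_kd_k}$ are delicate. The decisive observation is that $k\vartheta\wedge\sspace$ is a zeroth-order operator, so $d_k$ and $d$ have the same principal symbol, and by \eqref{eq:delta-dual-d} the operator $\delta_k$ is, up to sign and degree-shift, the metric adjoint of a twisted differential, whose top-order part is again independent of $k$. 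Consequently the principal symbols of $\Delta_{d_k+\delta_k}$ and $\Delta_{\delta_kd_k}$ coincide with those of their $\vartheta=0$ specializations, which are exactly the Tseng-Yau symplectic Bott-Chern and Aeppli Laplacians; since the latter are shown elliptic in \cite{tseng-yau-1}, the same symbol computation certifies ellipticity in our setting. Alternatively one recomputes the fourth-order symbol directly and checks that it is positive definite off the zero section. This reduction is the part I would write out most carefully.

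With ellipticity established, parts (ii) and (iv) are formal: a self-adjoint elliptic operator on a compact manifold has finite-dimensional kernel and yields the orthogonal Hodge decomposition $\wedge^\bullet X = \ker\Delta\oplus\imm\Delta$. The finiteness of $\ker\Delta$ then gives, through the isomorphisms of (iii), the finite dimension of each lcs-cohomology, proving (iv).

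It remains to prove the isomorphisms in (iii), which is the algebraic heart of the statement. For $\Delta_{d_k}$ and $\Delta_{\delta_k}$ this is routine: by non-negativity $\Delta_{d_k}\alpha=0$ if and only if $d_k\alpha=0$ and $d_k^*\alpha=0$, and the usual argument produces a unique harmonic representative in each $d_k$-class, so $\ker\Delta_{d_k}\cong H^\bullet_{d_k}(X)$; the case of $\Delta_{\delta_k}=\delta_k^*\delta_k+\delta_{k+1}\delta_{k+1}^*$ is identical, with $H^\bullet_{\delta_k}(X)=\ker\delta_k/\imm\delta_{k+1}$. For the Bott-Chern and Aeppli Laplacians I would follow Schweitzer and Tseng-Yau. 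The sum-of-squares structure first pins down the harmonic space: expanding $\langle\Delta_{d_k+\delta_k}\alpha,\alpha\rangle$ shows $\Delta_{d_k+\delta_k}\alpha=0$ if and only if $d_k\alpha=0$, $\delta_k\alpha=0$ and $(\delta_{k+1}d_{k+1})^*\alpha=0$ (the remaining vanishings being automatic, e.g. $\delta_kd_k\alpha=\delta_k(d_k\alpha)$). Then, combining the Hodge decomposition for $\Delta_{d_k+\delta_k}$ with the bi-differential relations $d_k^2=0$, $\delta_k\delta_{k+1}=0$, $d_{k-1}\delta_k+\delta_kd_k=0$ of Lemma~\ref{lemma:double-cplx}, I would establish the refined orthogonal decomposition $\ker d_k\cap\ker\delta_k=\ker\Delta_{d_k+\delta_k}\oplus\imm(\delta_{k+1}d_{k+1})$, whence $\ker\Delta_{d_k+\delta_k}\cong H^\bullet_{d_k+\delta_k}(X)$ at once. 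The Aeppli case is handled dually, with harmonic condition $d_k^*\alpha=0$, $\delta_{k+1}^*\alpha=0$, $\delta_kd_k\alpha=0$ and decomposition $\ker\delta_kd_k=\ker\Delta_{\delta_kd_k}\oplus(\imm d_k+\imm\delta_{k+1})$. The only lcs-specific care is the bookkeeping of the weight $k$ along the bi-complex, since horizontal and vertical arrows live in adjacent weights; the relations of Lemma~\ref{lemma:double-cplx} are precisely what make the telescoping cancellations go through.
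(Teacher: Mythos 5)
Your proposal is correct and follows essentially the same route as the paper: the paper's proof consists precisely of observing that the top-order terms of the four Laplacians coincide with those of the $k=0$ (Tseng--Yau symplectic) case, citing \cite[Proposition 3.3, Theorem 3.5, Theorem 3.16]{tseng-yau-1} for ellipticity, and then invoking the general theory of self-adjoint elliptic operators. You merely spell out in more detail the standard Schweitzer/Tseng--Yau decomposition arguments that the paper leaves implicit under ``general theory,'' and your identification of the harmonic conditions (e.g.\ $d_k\alpha=0$, $\delta_k\alpha=0$, $(\delta_{k+1}d_{k+1})^*\alpha=0$ for $\Delta_{d_k+\delta_k}$) is consistent with what the paper itself uses later in the proof of Theorem \ref{thm:poincare-ABC}.
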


\begin{proof}
 Notice that the top order terms coincide with the terms corresponding to $k=0$. In particular, the operators are ellipic, see \cite[Proposition 3.3, Theorem 3.5, Theorem 3.16]{tseng-yau-1}. The statement follows from the general theory of differential self-adjoint elliptic operators.
\end{proof}

\subsection{Symmetries in lcs cohomologies}

The following two results resumes the dualities {\itshape à la Poincar\'e} for the lcs cohomologies.

\begin{prop}\label{prop:poincare}
Let $X$ be a compact differentiable manifold of dimension $2n$ endowed with a locally conformal symplectic form $\Omega$ with Lee form $\vartheta$.
Then, for any weight $k\in\R$, for any degree $h\in\Z$, the symplectic-$\star$-operator induces the isomorphism
$$
\star\colon H^{n-h}_{d_k}(X) \stackrel{\simeq}{\to} H^{n+h}_{\delta_{h+k}}(X) \;.
$$
On the other side, once chosen a compatible triple an almost-K\"ahler structure $(g, J, \Omega)$ on $X$, for any $k\in\R$, $h\in\Z$, the Hodge-$*$-operator induces the isomorphisms
$$
*\colon H^{n-h}_{d_k}(X) \stackrel{\simeq}{\to} H^{n+h}_{d_{-k}}(X)
\qquad \text{ and } \qquad
*\colon H^{n-h}_{\delta_{-k-h}}(X) \stackrel{\simeq}{\to} H^{n+h}_{\delta_{k+h}}(X) \; .
$$
\end{prop}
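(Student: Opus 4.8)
The plan is to treat all three isomorphisms by the same device. Each of the operators $\star$ and $*$ is a pointwise linear isomorphism $\wedge^{n-h}X\to\wedge^{n+h}X$, so it suffices to check that it carries the data defining the source cohomology onto the data defining the target one. I would do this in one of two ways: by conjugating the relevant differentials directly on the level of forms (cleanest for the symplectic $\star$, since it handles kernels and images simultaneously), or, when the operators are genuine metric adjoints, by intertwining the corresponding Laplacians and then invoking the Hodge isomorphisms of Proposition~\ref{prop:hodge-isom}(iii) (cleanest for the Hodge $*$).

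For the symplectic duality $\star\colon H^{n-h}_{d_k}(X)\to H^{n+h}_{\delta_{h+k}}(X)$ I would argue directly. Evaluating \eqref{eq:delta-dual-d} in weight $h+k$ on $\wedge^{n+h}X$ gives $\delta_{h+k}\lfloor_{\wedge^{n+h}X}=d^\star_{-k}\lfloor_{\wedge^{n+h}X}$, and \eqref{eq:dk-star} then turns this into $(-1)^{n+h}\star d_k\star$. Since $\star^2=\mathrm{id}$, for $\alpha\in\wedge^{n-h}X$ this yields $\delta_{h+k}(\star\alpha)=(-1)^{n+h}\star(d_k\alpha)$, so $\star$ matches $\ker(d_k\lfloor_{\wedge^{n-h}X})$ with $\ker(\delta_{h+k}\lfloor_{\wedge^{n+h}X})$ bijectively. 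Repeating the computation in weight $h+k+1$ on $\wedge^{n+h+1}X$ gives $\star(d_k\beta)=(-1)^{n+h+1}\delta_{h+k+1}(\star\beta)$ for $\beta\in\wedge^{n-h-1}X$, so $\star$ also matches $\imm d_k$ with $\imm\delta_{h+k+1}$. Hence $\star$ descends to the claimed isomorphism.

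For the Hodge duality $*\colon H^{n-h}_{d_k}(X)\to H^{n+h}_{d_{-k}}(X)$ I would use the Laplacians. From $d_k^*=-*d_{-k}*$ together with $*^2\lfloor_{\wedge^jX}=(-1)^j$ one gets, degree by degree, $*d_k=(-1)^{\deg+1}d_{-k}^{*}\,*$ and $*d_k^{*}=(-1)^{\deg}d_{-k}\,*$, and substituting these into $\Delta_{d_k}=d_kd_k^*+d_k^*d_k$ shows $*\Delta_{d_k}=\Delta_{d_{-k}}*$. Thus $*$ maps $\ker\Delta_{d_k}\cap\wedge^{n-h}X$ isomorphically onto $\ker\Delta_{d_{-k}}\cap\wedge^{n+h}X$, and Proposition~\ref{prop:hodge-isom}(iii) converts this into the isomorphism of cohomologies. (Equivalently one may use the pairing $([\alpha],[\beta])\mapsto\int_X\alpha\wedge\beta$ on $H^{n-h}_{d_k}(X)\times H^{n+h}_{d_{-k}}(X)$, which is well defined because $d(\alpha\wedge\beta)=0$ as soon as $d_k\alpha=0$ and $d_{-k}\beta=0$, and non-degenerate by ellipticity.)

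The third duality $*\colon H^{n-h}_{\delta_{-k-h}}(X)\to H^{n+h}_{\delta_{k+h}}(X)$ is the most delicate, and I expect its bookkeeping to be the main obstacle. The key is that the weight carried by $\delta$ slides with the degree: by \eqref{eq:delta-dual-d}, $\delta_m\lfloor_{\wedge^jX}=(d^c_{\psi})^{*}$ with $\psi=j-n-m$, and since the two operators $(\delta_m,\delta_{m+1})$ entering $\Delta_{\delta_m}$ at degree $j$ carry the \emph{same} effective weight ($\psi(j+1)=\psi(j)$), one checks that $\Delta_{\delta_m}\lfloor_{\wedge^jX}=\Delta_{d^c_{\psi}}\lfloor_{\wedge^jX}$. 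The $d^c$-analogue of the previous paragraph, based on $(d^c_k)^{*}=-*d^c_{-k}*$, gives $*\Delta_{d^c_{\psi}}=\Delta_{d^c_{-\psi}}*$. It then remains to track the weights across the degree jump produced by $*$: in degree $n-h$ the weight is $\psi(n-h)=k$ for $m=-k-h$, while in degree $n+h$ the value $-k$ is exactly $\psi(n+h)$ for $m=k+h$. Hence $*\Delta_{\delta_{-k-h}}\lfloor_{\wedge^{n-h}X}=\Delta_{\delta_{k+h}}\lfloor_{\wedge^{n+h}X}*$, and Proposition~\ref{prop:hodge-isom}(iii) again yields the isomorphism. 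The crux of the whole proof is precisely verifying that these degree-dependent weights conspire to produce the target weight $k+h$, and that the signs coming from $*^2\lfloor_{\wedge^jX}=(-1)^j$ and $\star^2=\mathrm{id}$ remain harmless throughout.
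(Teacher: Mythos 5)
Your proposal is correct and follows essentially the same route as the paper: the symplectic duality is obtained by conjugating the differentials via \eqref{eq:delta-dual-d} and \eqref{eq:dk-star} together with $\star^2=\mathrm{id}$, and the Hodge dualities by the Laplacian commutation $*\Delta_{d_k}=\Delta_{d_{-k}}*$ combined with Proposition \ref{prop:hodge-isom}. The only difference is that the paper dismisses the third isomorphism as ``similar'', whereas you supply the weight bookkeeping explicitly through the identification $\Delta_{\delta_m}\lfloor_{\wedge^jX}=\Delta_{d^c_{j-n-m}}\lfloor_{\wedge^jX}$, which is a correct and welcome completion of that step.
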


\begin{proof}
The first statement follows by the formula (\ref{eq:delta-dual-d}):
\begin{eqnarray*}
\delta_{h+k} \star \lfloor_{\wedge^{n-h}X} &=& d^{\star}_{-n-(h+k)+(n+h)}\star \;=\; d^\star_{-k}\star \\[5pt]
&=&
(-1)^{n+h} \star d_{k} \star\star \\[5pt]
&=& (-1)^{n+h} \star d_{k} \lfloor_{\wedge^{n-h}X} \;,
\end{eqnarray*}
and by $\star^2=\mathrm{id}$.

Now let $(g, J, \Omega)$ be a compatible triple. Denoting with
$\mathcal{H}^\bullet_{d_k}(X):=\ker\Delta_{d_k}$ we prove that
$$
*:\mathcal{H}^{n-h}_{d_k}(X) \stackrel{\simeq}{\to} \mathcal{H}^{n+h}_{d_{-k}}(X)\;;
$$
the proof of the other isomorphism is similar.
Let $\alpha\in\mathcal{H}^{n-h}_{d_k}(X)$, namely $d_k\alpha=0$ and
$d_k^*\alpha=0$. Then
$$
d_{-k}*\alpha=(-1)^{n-h+1} **d_{-k}*\alpha= (-1)^{n-h} *d_k^*\alpha
$$
and
$$
d_{-k}^**\alpha=-*d_{k}**\alpha= (-1)^{n-h+1} *d_k\alpha \;.
$$
We have then proved the commutation relation $\Delta_{d_{-k}}*=*\Delta_{d_k}$.
\end{proof}

\begin{thm}\label{thm:poincare-ABC}
Let $X$ be a compact differentiable manifold of dimension $2n$ endowed with a locally conformal symplectic form $\Omega$ with Lee form $\vartheta$.
Let $(g, J, \Omega)$ be an almost-K\"ahler structure on $X$.
Then, for any weight $k\in\R$, for any degree $h\in\Z$, the Hodge-$*$-operator induces the isomorphism
$$
*\colon H^{n-h}_{d_k+\delta_k}(X) \stackrel{\simeq}{\to} H^{n+h}_{\delta_{-k} d_{-k}}(X) \;.
$$
\end{thm}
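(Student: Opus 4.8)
The plan is to mimic the second half of the proof of Proposition~\ref{prop:poincare}, replacing the two second-order Laplacians used there by the two fourth-order ones. By Proposition~\ref{prop:hodge-isom}, the two cohomologies in the statement are isomorphic to the spaces of harmonic forms $\ker\Delta_{d_k+\delta_k}\lfloor_{\wedge^{n-h}X}$ and $\ker\Delta_{\delta_{-k}d_{-k}}\lfloor_{\wedge^{n+h}X}$. Since $*\colon\wedge^{n-h}X\to\wedge^{n+h}X$ is a linear isomorphism, it suffices to prove the commutation relation
\[
\Delta_{\delta_{-k}d_{-k}}\,* \;=\; *\,\Delta_{d_k+\delta_k}\qquad\text{on }\wedge^{n-h}X .
\]
Granting it, $*$ sends harmonic forms to harmonic forms and restricts to the desired isomorphism, the inverse being produced by applying the same relation to $*^{-1}$.

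First I would record how $*$ conjugates the four generators of the bi-differential structure, all in the spirit of the adjoint formula $d_k^*=-*d_{-k}*$. Combining that formula with $*^2\lfloor_{\wedge^hX}=(-1)^h$ gives, on $\wedge^hX$,
\[
*\,d_k=(-1)^{h+1}\,d_{-k}^*\,* \qquad\text{and}\qquad *\,d_k^*=(-1)^{h}\,d_{-k}\,* ,
\]
so that conjugation by $*$ exchanges $d_k\leftrightarrow d_{-k}^*$ and negates the weight. Using \eqref{eq:delta-dual-d}, which rewrites $\delta_k$ and $\delta_k^*$ through $d^c$ at a degree-dependent weight, together with $(d^c_k)^*=-*d^c_{-k}*$, I would derive in the very same way two companion relations exchanging $\delta_k$ with $\delta^*$ (up to sign), the right-hand weight being the one prescribed by \eqref{eq:delta-dual-d}. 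As a sanity test, one then checks that these relations reproduce the $\delta$-duality $*\colon H^{n-h}_{\delta_{-k-h}}(X)\to H^{n+h}_{\delta_{k+h}}(X)$ of Proposition~\ref{prop:poincare}.

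With the four relations in hand, I would substitute them into the six summands defining $\Delta_{d_k+\delta_k}$ and verify that $*$ carries each onto exactly one summand of $\Delta_{\delta_{-k}d_{-k}}$. For the second-order part this is immediate: a short computation gives $*\,d_k^*d_k=d_{-k}d_{-k}^*\,*$ and, analogously, $*\,\delta_k^*\delta_k=(\delta\,\delta^*)\,*$ at the matching weight, reproducing the terms $d_{-k}d_{-k}^*$ and $\delta_{\bullet}\delta_{\bullet}^*$ of $\Delta_{\delta_{-k}d_{-k}}$. Each of the four fourth-order blocks $(\delta_{k+1}d_{k+1})(\delta_{k+1}d_{k+1})^*$, $(\delta_kd_k)^*(\delta_kd_k)$, $(d_k^*\delta_{k+1})(d_k^*\delta_{k+1})^*$, $(d_{k-1}^*\delta_k)^*(d_{k-1}^*\delta_k)$ is likewise carried onto one of the four fourth-order blocks of $\Delta_{\delta_{-k}d_{-k}}$; because every block has the form $PP^*$ or $P^*P$, all signs square away. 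Conceptually this is the Bott-Chern$\leftrightarrow$Aeppli duality of the Tseng--Yau/Schweitzer formalism: $*$ replaces every operator by the $*$-conjugate of its adjoint and negates the weight, and the two fourth-order Laplacians are precisely designed to be interchanged by this operation.

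The main obstacle is the weight-and-sign bookkeeping. In the symplectic case $\vartheta=0$ all $d_k$ equal $d$ and all $\delta_k$ equal $d^\Lambda$, so the matching is transparent; here, instead, $\delta_k$ carries a weight that depends through \eqref{eq:delta-dual-d} on the degree on which it acts, and pushing $*$ (which sends degree $p$ to $2n-p$) through a composition of three or four such operators shifts these weights. The crux is to fix one uniform convention for the degree labels $\lfloor_{\wedge^hX}$ and then to check that the shifts coming from the two distinct vertical weights $\delta_k,\delta_{k+1}$ appearing in the double complex \eqref{eq:double-cplx} are absorbed so that weight $k$ in degree $n-h$ is turned into weight $-k$ in degree $n+h$ simultaneously for all six blocks. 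Once the generator relations of the second paragraph are pinned down consistently with Proposition~\ref{prop:poincare}, the term-by-term matching becomes routine.
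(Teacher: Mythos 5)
Your proposal is correct and follows essentially the same route as the paper: both reduce to harmonic representatives via Proposition~\ref{prop:hodge-isom} and rest on the key conjugation relation $\delta_k^* = *\,\delta_{-k+1}\,*$ (which the paper derives via $L^*=-\Lambda$ and $JL=L$, $J\Lambda=\Lambda$, rather than via \eqref{eq:delta-dual-d} and $(d^c_k)^*=-*d^c_{-k}*$, but the two derivations agree). The only cosmetic difference is that the paper matches kernels --- characterizing $\ker\Delta_{d_k+\delta_k}$ by three first/second-order conditions that $*$ visibly exchanges with those defining $\ker\Delta_{\delta_{-k}d_{-k}}$ --- instead of verifying the full six-term operator identity; this spares most of the bookkeeping you flag (e.g.\ the use of $d_{k-1}\delta_k+\delta_kd_k=0$ needed to recognize the $*$-conjugate of the block $(\delta_{k+1}d_{k+1})(\delta_{k+1}d_{k+1})^*$ among the summands of $\Delta_{\delta_{-k}d_{-k}}$).
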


\begin{proof}
Note that $L^*=*^{-1}L*=\star^{-1}L\star=L^\star=-\Lambda$.
We claim that $\delta_k^* = *\delta_{-k+1}*$. Indeed, by using also $JL=L$ and $J\Lambda=\Lambda$:
\begin{eqnarray*}
\delta_k^*\lfloor_{\wedge^hX} &=& (d_{k-1}\Lambda-\Lambda d_k)^* \;=\; d_k^*L-Ld_{k-1}^* \\[5pt]
&=& - *d_{-k}*L*^{-1}*+ **^{-1}L*d_{-k+1}* \\[5pt]
&=& - *d_{-k}(*^{-1}L*)*+ *(*^{-1}L*)d_{-k+1}* \\[5pt]
&=& *d_{-k}\Lambda*-*\Lambda d_{-k+1}* \\[5pt]
&=& *(d_{-k}\Lambda-\Lambda d_{-k+1})* \\[5pt]
&=& * \delta_{-k+1}* \;.
\end{eqnarray*}
Using this relation and the definitions of the lcs Laplacians, we get that, for any differential form $\alpha$, it holds
$\Delta_{d_k+\delta_k}\alpha=0$ if and only if
$$ d_k\alpha\;=\;0 \;,\qquad \delta_k\alpha\;=\;0\;, \qquad (d_k\delta_{k+1})^*\alpha \;=\;0 \;,$$
equivalently,
$$ d_{-k}^*(*\alpha)\;=\;0 \;,\qquad \delta_{-k+1}^*(*\alpha)\;=\;0\;, \qquad \delta_{-k}d_{-k}(*\alpha)\;=\;0\;,$$
that is, $\Delta_{\delta_{-k} d_{-k}}(*\alpha)=0$. By Proposition \ref{prop:hodge-isom}, we get the proof.
\end{proof}

\subsection{Hard Lefschetz Condition for lcs cohomologies}

As a consequence of the previous relations and their dual we can prove the Hard Lefschetz Condition for the lcs-Bott-Chern and lcs-Aeppli cohomologies
(see \cite[Theorem 3.11, Theorem 3.22]{tseng-yau-1} for the same result in the symplectic setting).

\begin{lem}\label{lem:commutation-relations}
Let $X$ be a manifold endowed with a lcs structure $\Omega$ with Lee form $\theta$.
Then the following commutation relations hold:
$$
Ld_{k}-d_{k+1}L \;=\; 0\;, \qquad
L\delta_{k} - \delta_{k+1} L\;=\; d_k \;,
$$
$$ d_{k-1}\Lambda-\Lambda d_k \;=\; \delta_k\;, \qquad
\delta_{k-1}\Lambda-\Lambda\delta_{k}\;=\;0.$$
\end{lem}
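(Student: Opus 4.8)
The plan is to treat the four identities as two dual pairs, deriving everything from the definition of $\delta_k$ together with the single Lefschetz-type identity $Ld_k=d_{k+1}L$. The third relation $d_{k-1}\Lambda-\Lambda d_k=\delta_k$ requires no argument: it is verbatim the definition of $\delta_k$. For the first relation I would argue directly from the Leibniz rule. Writing $L\alpha=\Omega\wedge\alpha$ and applying $d_{k+1}(\Omega\wedge\alpha)=d_{(k+1)-h}\Omega\wedge\alpha+(-1)^{\deg\Omega}\,\Omega\wedge d_h\alpha$ with the splitting $h=k$, the boundary term is $d_1\Omega\wedge\alpha$; but $d_1\Omega=d\Omega-\vartheta\wedge\Omega=0$ is precisely the lcs condition, so $d_{k+1}(\Omega\wedge\alpha)=\Omega\wedge d_k\alpha=Ld_k\alpha$, which gives $Ld_k-d_{k+1}L=0$.

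For the second relation $L\delta_k-\delta_{k+1}L=d_k$ I would substitute $\delta_\bullet=d_{\bullet-1}\Lambda-\Lambda d_\bullet$ and use the first relation, in the forms $Ld_{k-1}=d_kL$ and $d_{k+1}L=Ld_k$, to commute $L$ past the twisted differentials. All the differential terms then recombine so that the remaining expression is $d_k(L\Lambda-\Lambda L)-(L\Lambda-\Lambda L)d_k=[d_k,H]$, where $H=[L,\Lambda]$. Since the $\mathfrak{sl}(2;\R)$-element $H$ acts on $\wedge^hX$ as $(n-h)\,\mathrm{id}$ and $d_k$ raises the degree by one, one gets $[d_k,H]=d_k$, which is the claim.

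For the fourth relation $\delta_{k-1}\Lambda-\Lambda\delta_k=0$, the dual of the first, I would pass to the symplectic Hodge-$\star$-operator. Combining $d_\ell^\star\lfloor_{\wedge^hX}=(-1)^h\star d_{-\ell}\star$ with $\delta_k\lfloor_{\wedge^hX}=d^\star_{-(n+k-h)}$ yields $\delta_k\lfloor_{\wedge^hX}=(-1)^h\star d_{n+k-h}\star$, while $\Lambda=-\star L\star$. Substituting these into $\delta_{k-1}\Lambda-\Lambda\delta_k$ on $\wedge^hX$ and cancelling the adjacent $\star$'s via $\star^2=\mathrm{id}$, the expression collapses to $(-1)^h\star\bigl(Ld_{n+k-h}-d_{n+k-h+1}L\bigr)\star$, whose inner bracket vanishes by the first relation applied at weight $\ell=n+k-h$.

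I expect the main obstacle to be the degree-and-sign bookkeeping in the fourth step: keeping track of which weight of $\delta$ appears in which degree after $\Lambda$ shifts the degree by $-2$, and checking that the $\star$-cancellations deposit matching signs so the residual bracket is exactly $Ld_\ell-d_{\ell+1}L$. A secondary point of care is fixing the $\mathfrak{sl}(2;\R)$ weight convention $H\lfloor_{\wedge^hX}=(n-h)\,\mathrm{id}$ (consistent with $\Lambda=-\iota_{\Omega^{-1}}$, as one checks on $\wedge^0X$ where $\Lambda\Omega=-n$) so that the commutator in the second step equals $+d_k$ rather than $-d_k$. The first and third relations are routine once the lcs condition and the Leibniz rule are in hand.
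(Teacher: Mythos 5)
Your proposal is correct and follows essentially the same route as the paper: the first identity via the Leibniz rule and $d_1\Omega=0$, the third as the definition of $\delta_k$, the second by substituting the definition, commuting $L$ past the twisted differentials with the first identity, and reducing to $d_kH-Hd_k=d_k$ using $H\lfloor_{\wedge^sX}=(n-s)\,\mathrm{id}$, and the fourth as the symplectic-$\star$-dual of the first. The only difference is that you spell out the $\star$-conjugation for the fourth identity, which the paper delegates to a citation; your bookkeeping there is consistent with the paper's conventions $\Lambda=-\star L\star$ and $\delta_k\lfloor_{\wedge^hX}=(-1)^h\star d_{n+k-h}\star$.
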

\begin{proof}
The first, \cite[Equation (2.5)]{le-vanzura}, follows by the Leibniz rule and the lcs condition $d_1\Omega=0$. 
The second follows by the first one and by $[L,\Lambda]=H$: indeed,
\begin{eqnarray*}
 L\delta_k-\delta_{k+1}L&=&
Ld_{k-1}\Lambda - L \Lambda d_k-d_k\Lambda L + \Lambda d_{k+1}L\\[5pt]
&=& d_k L \Lambda - L\Lambda d_k -d_k\Lambda L + \Lambda L d_k
\;=\; d_k H - H d_k \\[5pt]
&=& d_k \sum_s (n-s) \pi_{\wedge^s X} - \sum_s (n-s-1) d_k \pi_{\wedge^s X}
\;=\; d_k \;, 
\end{eqnarray*}
where we recall that $H\lfloor_{\wedge^\bullet X}=\sum_s (n-s) \pi_{\wedge^s X}$ where $\pi_{\wedge s X}$ denotes the projection onto the space $\wedge^sX$.
The third and the fourth relations are respectively the definition of $\delta_k$ and the symplectic dual of the first commutation identity above, see \cite[Proposition 2.5]{le-vanzura}.
\end{proof}

\begin{thm}\label{thm:lcs-HLC}
Let $X$ be a compact manifold of dimension $2n$ endowed with a lcs structure $\Omega$ with Lee form $\theta$. Then, for any $h\in\Z$, for any $k\in\R$, the following maps are isomorphisms:
$$
L^h \colon H^{n-h}_{d_k+\delta_k}(X) \stackrel{\simeq}{\to}H^{n+h}_{d_{k+h}+\delta_{k+h}}(X) \;,
$$
$$
L^h \colon H^{n-h}_{\delta_kd_k}(X) \stackrel{\simeq}{\to}H^{n+h}_{\delta_{k+h}d_{k+h}}(X) \;.
$$
\end{thm}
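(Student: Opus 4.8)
The plan is to prove the Bott--Chern statement by hand and then deduce the Aeppli statement from it by duality. The crux is that $L^h$ should be shown to descend to $H^\bullet_{d_k+\delta_k}(X)$ \emph{and} to be injective there, and that the opposite-direction operator $\Lambda^h$ should be shown to do the same; since these two maps run between the same pair of finite-dimensional spaces in opposite directions, two-sided injectivity forces equality of dimensions, and an injective map between spaces of equal finite dimension is an isomorphism.

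First I would record the intertwining relations coming from Lemma~\ref{lem:commutation-relations} together with $d_kd_k=0$ and $\delta_k\delta_{k+1}=0$ from Lemma~\ref{lemma:double-cplx}. Iterating $Ld_k=d_{k+1}L$ and $L\delta_k=\delta_{k+1}L+d_k$ gives $L^hd_k=d_{k+h}L^h$ and, since $L\delta_kd_k=(\delta_{k+1}L+d_k)d_k=\delta_{k+1}d_{k+1}L$, also $L^h(\delta_{k+1}d_{k+1})=(\delta_{k+h+1}d_{k+h+1})L^h$. These two identities show at once that $L^h$ carries $\ker d_k\cap\ker\delta_k$ into $\ker d_{k+h}\cap\ker\delta_{k+h}$ and $\imm\delta_{k+1}d_{k+1}$ into $\imm\delta_{k+h+1}d_{k+h+1}$, so $L^h$ is well defined on $H^\bullet_{d_k+\delta_k}(X)$. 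Dually, from $\Lambda d_k=d_{k-1}\Lambda-\delta_k$ and $\Lambda\delta_k=\delta_{k-1}\Lambda$ I would obtain the mirror relation $\Lambda\delta_jd_j=\delta_{j-1}\Lambda d_j=\delta_{j-1}(d_{j-1}\Lambda-\delta_j)=\delta_{j-1}d_{j-1}\Lambda$, using $\delta_{j-1}\delta_j=0$; hence $\Lambda^h$ descends to an \emph{opposite-direction} map $H^{n+h}_{d_{k+h}+\delta_{k+h}}(X)\to H^{n-h}_{d_k+\delta_k}(X)$.

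The decisive input is the algebraic Hard Lefschetz furnished by the $\mathfrak{sl}(2;\R)$-representation recalled in Section~1: the bundle maps $L^h\colon\wedge^{n-h}X\to\wedge^{n+h}X$ and $\Lambda^h\colon\wedge^{n+h}X\to\wedge^{n-h}X$ are isomorphisms. Injectivity of $L^h$ on Bott--Chern cohomology then follows by a pull-back: if $\alpha\in\ker d_k\cap\ker\delta_k$ satisfies $L^h\alpha=\delta_{k+h+1}d_{k+h+1}\gamma$, then, because $\delta d$ preserves degree, $\gamma\in\wedge^{n+h}X$, so $\gamma=L^h\gamma'$ for a unique $\gamma'\in\wedge^{n-h}X$, and $L^h\alpha=L^h(\delta_{k+1}d_{k+1}\gamma')$ forces $\alpha=\delta_{k+1}d_{k+1}\gamma'$ by injectivity of $L^h$ on $\wedge^{n-h}X$. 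The identical argument with $\Lambda$ in place of $L$, using $\Lambda^h(\delta_{k+h+1}d_{k+h+1})=(\delta_{k+1}d_{k+1})\Lambda^h$ and the isomorphism $\Lambda^h\colon\wedge^{n+h}X\to\wedge^{n-h}X$, shows that $\Lambda^h$ is injective on Bott--Chern cohomology as well. Since $L^h$ and $\Lambda^h$ are injective maps in opposite directions between $H^{n-h}_{d_k+\delta_k}(X)$ and $H^{n+h}_{d_{k+h}+\delta_{k+h}}(X)$, both finite-dimensional by Proposition~\ref{prop:hodge-isom}, their dimensions coincide and $L^h$ is an isomorphism.

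The main obstacle is that this pull-back yields only injectivity, never surjectivity directly: $L^h$ is \emph{not} bijective on forms away from the middle degrees, so one cannot lift an Aeppli-coboundary $d_{k+h}\mu+\delta_{k+h+1}\nu$ (with $\mu,\nu$ sitting in degrees $n+h\mp1$) through $L^h$. I sidestep this for Bott--Chern exactly by bringing in $\Lambda^h$, and I treat the Aeppli statement not directly but via the Hodge-$*$ duality of Theorem~\ref{thm:poincare-ABC}. Using the relation $L^\ast=-\Lambda$, equivalently $L*=-*\Lambda$, from the proof of that theorem, the operator $L^h$ on lcs-Aeppli cohomology is conjugate through $*$ to $(-1)^h\Lambda^h$ on lcs-Bott--Chern cohomology; as the latter has just been proved to be an isomorphism, so is $L^h$ on the Aeppli side. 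This transports the Bott--Chern Hard Lefschetz isomorphism to the Aeppli one and finishes the proof.
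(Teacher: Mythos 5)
Your Bott--Chern argument is correct and takes a genuinely different route from the paper's. The paper proves both statements at once by introducing the truncated Laplacians $D_{d_k+\delta_k}$ and $D_{\delta_kd_k}$, showing that $L$ intertwines them (e.g.\ $LD_{d_k+\delta_k}=D_{d_{k+1}+\delta_{k+1}}L$), and transporting the pointwise bijectivity of $L^h\colon\wedge^{n-h}X\to\wedge^{n+h}X$ to the harmonic spaces, concluding via Proposition \ref{prop:hodge-isom}. You instead work entirely at the cochain level with the pair $(L^h,\Lambda^h)$, the intertwining relations, and two-sided injectivity; Hodge theory enters only through finite-dimensionality. For the Bott--Chern groups this is a clean, arguably more elementary alternative.

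The Aeppli step, however, has a gap. The isomorphism of Theorem \ref{thm:poincare-ABC} is established at the level of harmonic representatives: $*$ exchanges $\ker\Delta_{d_k+\delta_k}$ and $\ker\Delta_{\delta_{-k}d_{-k}}$, but it is \emph{not} a morphism of the underlying complexes. Indeed, from $d_j=-*^{-1}d_{-j}^{\,*}*^{-1}$ and $\delta_j=*^{-1}\delta_{1-j}^{\,*}*^{-1}$ one gets $*\bigl(\delta_{j+1}d_{j+1}\sigma\bigr)=\pm\,\delta_{-j}^{\,*}d_{-j-1}^{\,*}(*^{-1}\sigma)$, so $*$ carries Bott--Chern coboundaries into the image of the metric \emph{adjoints}, which is not contained in $\imm d_{-j}+\imm\delta_{-j+1}$; likewise $*$ need not carry a non-harmonic Bott--Chern cocycle to an Aeppli cocycle. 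Consequently the square comparing $L^h$ on Aeppli cohomology with $(-1)^h\Lambda^h$ on Bott--Chern cohomology, with the $*$-isomorphisms as vertical arrows, is not known to commute on cohomology classes, and ``conjugating through $*$'' is unjustified as written. The repair stays entirely inside your own framework: the pull-back that fails for injectivity on Aeppli succeeds for \emph{surjectivity}. Given an Aeppli cocycle $\beta\in\wedge^{n+h}X$, write $\beta=L^h\alpha$ with $\alpha\in\wedge^{n-h}X$ by the pointwise isomorphism; then $L^h(\delta_kd_k\alpha)=\delta_{k+h}d_{k+h}\beta=0$ forces $\delta_kd_k\alpha=0$, so $L^h\colon H^{n-h}_{\delta_kd_k}(X)\to H^{n+h}_{\delta_{k+h}d_{k+h}}(X)$ is surjective; the mirror argument with $\Lambda^h$ (which descends to Aeppli cohomology by $\Lambda d_j=d_{j-1}\Lambda-\delta_j$, $\Lambda\delta_{j+1}=\delta_j\Lambda$ and $\Lambda\delta_jd_j=\delta_{j-1}d_{j-1}\Lambda$) gives a surjection in the opposite direction, and two opposite surjections between finite-dimensional spaces are isomorphisms.
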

\begin{proof}
We consider the following differential operators
\begin{eqnarray*}
D_{d_k+\delta_k} &:=& d_k^*d_k+\delta_k^*\delta_k+ (\delta_{k+1}d_{k+1})(\delta_{k+1}d_{k+1})^* \;, \\[5pt]
D_{\delta_kd_k} &:=& d_kd_k^*+\delta_{k+1}\delta_{k+1}^*+(\delta_kd_k)^*(\delta_kd_k) \;.
\end{eqnarray*}
Notice that, $\ker D_{d_k+\delta_k}=\ker \Delta_{d_k+\delta_k}$
and $\ker D_{\delta_kd_k}=\ker \Delta_{\delta_kd_k}$.
The advantage of considering these operators is that by the relations proved in Lemma \ref{lem:commutation-relations} one easily gets
$$
LD_{d_k+\delta_k}=D_{d_{k+1}+\delta_{k+1}}L\;,\qquad
LD_{\delta_kd_k}=D_{\delta_{k+1}d_{k+1}}L.
$$
Notice that the operator $L$ does not commute with $\Delta_{d_\bullet+\delta_\bullet}$ and $\Delta_{\delta_\bullet d_\bullet}$.
As a consequence we have that the following maps are isomorphisms
$$
L^h \colon \mathcal{H}^{n-h}_{d_k+\delta_k}(X) \stackrel{\simeq}{\to}\mathcal{H}^{n+h}_{d_{k+h}+\delta_{k+h}}(X) \;,
$$
$$
L^h \colon \mathcal{H}^{n-h}_{\delta_kd_k}(X) \stackrel{\simeq}{\to}\mathcal{H}^{n+h}_{\delta_{k+h}d_{k+h}}(X) \;.
$$
The statement follows by Proposition \ref{prop:hodge-isom}.
\end{proof}

Similarly to \cite[Proposition 1.4]{merkulov}, \cite{guillemin}, \cite[Theorem 5.4]{cavalcanti} stating that the $dd^\Lambda$-Lemma and the Hard Lefschetz Condition are equivalent in the symplectic context, in the lcs setting we have the following result.

\begin{thm}\label{thm:hlc-lcslemma}
Let $X$ be a compact manifold of dimension $2n$ endowed with a lcs structure $\Omega$ with Lee form $\vartheta$. Then, the following conditions are equivalent:
\begin{enumerate}
\item it satisfies the {\em lcs-Hard Lefschetz Condition}, that is, for any $h\in\Z$, for any $k\in\R$, the map
$$ L^h \colon H^{n-h}_{d_k}(X) \to H^{n+h}_{d_{k+h}}(X) $$
is an isomorphism;
\item it satisfies the lcs-Lemma, equivalently, for any $h\in\Z$, for any $k\in\R$, the map
$$ H^h_{d_k+\delta_k}(X) \to H^h_{d_k}(X) $$
is an isomorphism;
\item it is symplectic up to global conformal changes and it satisfies the Hard Lefschetz Condition.
\end{enumerate}
\end{thm}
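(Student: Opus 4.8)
The plan is to establish the cycle of implications $(3)\Rightarrow(2)\Rightarrow(1)\Rightarrow(3)$, reading condition $(2)$ as ``$X$ satisfies the lcs-Lemma'' and using throughout the fact, recorded after \eqref{eq:coom}, that the lcs-Lemma forces all the natural maps in the diamond to be isomorphisms, in particular $H^\bullet_{d_k+\delta_k}(X)\to H^\bullet_{d_k}(X)$.

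For $(3)\Rightarrow(2)$ I would first use that, since $\vartheta$ is exact, the gauge relations \eqref{eq:gauge-d}--\eqref{eq:gauge-delta} let us conformally rescale $\Omega$ to a genuinely symplectic form, i.e.\ reduce to the case $\vartheta=0$. There $d_k=d$ and $\delta_k=d^\Lambda$ for every $k$, so the lcs cohomologies are exactly the Tseng--Yau symplectic cohomologies and every $\delta_kd_k$-Lemma collapses to the single $dd^\Lambda$-Lemma; the hypothesised Hard Lefschetz Condition then yields the $dd^\Lambda$-Lemma, hence the lcs-Lemma. For $(2)\Rightarrow(1)$ I would combine the unconditional Theorem \ref{thm:lcs-HLC} with the lcs-Lemma. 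Since $Ld_k=d_{k+1}L$ (Lemma \ref{lem:commutation-relations}) gives $L^h d_k=d_{k+h}L^h$, the operator $L^h$ descends to each cohomology and the square
$$
\xymatrix{
H^{n-h}_{d_k+\delta_k}(X) \ar[r]^{L^h} \ar[d]_{\simeq} & H^{n+h}_{d_{k+h}+\delta_{k+h}}(X) \ar[d]^{\simeq} \\
H^{n-h}_{d_k}(X) \ar[r]^{L^h} & H^{n+h}_{d_{k+h}}(X)
}
$$
commutes, where the top arrow is an isomorphism by Theorem \ref{thm:lcs-HLC} and the vertical projections are isomorphisms by the lcs-Lemma; hence the bottom $L^h$ is an isomorphism, which is exactly $(1)$.

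The heart of the argument is $(1)\Rightarrow(3)$, where the first task is to deduce $[\vartheta]=0$. I would specialise the lcs-HLC to $h=n$, $k=0$, giving an isomorphism $L^n\colon H^0_{d_0}(X)\xrightarrow{\simeq}H^{2n}_{d_n}(X)$. On a connected $X$ one has $H^0_{d_0}(X)\cong\C\neq 0$, while, choosing a compatible $J$, the Poincaré duality of Proposition \ref{prop:poincare} identifies $H^{2n}_{d_n}(X)\cong H^0_{d_{-n}}(X)$. Were $\vartheta$ non-exact, any $f$ with $d_{-n}f=0$, that is $df=-n\,\vartheta f$, would be a nowhere-vanishing function (integrating the linear ODE $df=-n\,\vartheta f$ along paths shows $f$ cannot vanish once nonzero, and $X$ is connected), whence $\vartheta=-\tfrac1n\,d\log|f|$ would be exact, a contradiction; thus $H^0_{d_{-n}}(X)=0$ and so $H^{2n}_{d_n}(X)=0$, contradicting $L^n$ being an isomorphism onto it. Therefore $[\vartheta]=0$, so up to a global conformal change $\Omega$ is symplectic, $d_k=d$ for all $k$, and the lcs-HLC at $k=0$ is precisely the classical Hard Lefschetz Condition for $\Omega$, giving $(3)$.

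I expect the exactness step in $(1)\Rightarrow(3)$ to be the main obstacle: the remaining implications are essentially bookkeeping built on Theorem \ref{thm:lcs-HLC}, the gauge invariance, and the Tseng--Yau equivalence, whereas forcing $[\vartheta]=0$ out of the Lefschetz isomorphisms relies on the observation that a non-exact twist kills top- and bottom-degree Morse--Novikov cohomology, which clashes with the nonvanishing of $H^0_{d_0}$ that the Hard Lefschetz isomorphism transports to top degree.
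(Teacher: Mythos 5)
Your proposal is correct and follows essentially the same route as the paper: $(2)\Rightarrow(1)$ via Theorem \ref{thm:lcs-HLC}, $(3)\Rightarrow(2)$ by reducing to the symplectic case and invoking the Merkulov--Guillemin--Cavalcanti equivalence, and $(1)\Rightarrow(3)$ by showing a non-exact $\vartheta$ would kill the degree-zero twisted cohomology $H^0_{d_{-n}}(X)$ (you reach it through $k=0$ plus the Poincar\'e duality of Proposition \ref{prop:poincare}, the paper takes $k=-n$ directly, but the nowhere-vanishing-function argument is the same). The only content of the paper's proof you omit is its optional, self-contained direct proof of $(1)\Rightarrow(2)$, which is not needed to close the cycle.
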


We will show that {\itshape (1)} gives $[\vartheta]=0$ and then {\itshape (3)}, and that {\itshape (2)} implies {\itshape (1)} because of Theorem \ref{thm:lcs-HLC}; finally, condition {\itshape (3)} is stronger than either {\itshape (1)} and {\itshape (2)} thanks to \cite[Proposition 1.4]{merkulov}, \cite{guillemin}, \cite[Theorem 5.4]{cavalcanti}. For the sake of completeness, we will also give a  proof of the equivalence of {\itshape (1)} and {\itshape (2)}, which may possibly turn useful for weaker statements.
Before proving this we will need few intermediate results.
\begin{prop}\label{prop:hlc-representative}
Let $X$ be a compact manifold endowed with a lcs structure $\Omega$ with Lee form $\vartheta$. Then, the following conditions are equivalent:
\begin{itemize}
\item it satisfies the lcs-Hard Lefschetz Condition;
\item for any $k\in\R$, there exists a $\delta_k$-closed representative in any cohomology class in $H_{d_k}^\bullet(X)$.
\end{itemize}
\end{prop}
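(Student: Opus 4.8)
The plan is to deduce both implications from one \emph{unconditional} structural fact: the graded space of ``lcs-harmonic'' forms carries an $\mathfrak{sl}(2;\R)$-action, once the weight is absorbed into the degree. Concretely, I would set $\mathcal K_m:=\ker d_m\cap\ker\delta_m$ and first record, directly from Lemma \ref{lem:commutation-relations}, that $L\colon\mathcal K_m\to\mathcal K_{m+1}$, $\Lambda\colon\mathcal K_m\to\mathcal K_{m-1}$, and that $H$ preserves each graded piece: for $\alpha\in\mathcal K_m$ one has $d_{m+1}L\alpha=Ld_m\alpha=0$ and $\delta_{m+1}L\alpha=L\delta_m\alpha-d_m\alpha=0$, and symmetrically $d_{m-1}\Lambda\alpha=\delta_m\alpha+\Lambda d_m\alpha=0$, $\delta_{m-1}\Lambda\alpha=\Lambda\delta_m\alpha=0$. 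Tying the weight to the degree by $k(j):=m+\tfrac{j-n}{2}$, the subspace $\mathcal W:=\bigoplus_j\mathcal K_{k(j)}^{\,j}$ of the space of all forms is then invariant under $L,\Lambda,H$; since $H$ acts semisimply with eigenvalues in $\{-n,\dots,n\}$, it is a locally finite, hence completely reducible, $\mathfrak{sl}(2;\R)$-module, and Lefschetz's theorem on $\mathcal W$ yields the unconditional isomorphism $L^h\colon\mathcal K_k^{\,n-h}\xrightarrow{\simeq}\mathcal K_{k+h}^{\,n+h}$ for every $k\in\R$, $h\ge0$. I also record the elementary remark that a primitive $d_m$-closed form is automatically $\delta_m$-closed, since $\delta_m=d_{m-1}\Lambda-\Lambda d_m$.

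For the implication ``$\delta_k$-closed representatives $\Rightarrow$ lcs-HLC'', note that the hypothesis says exactly that $\mathcal K_m^\bullet\to H^\bullet_{d_m}(X)$ is surjective for every $m$. Surjectivity of $L^h\colon H^{n-h}_{d_k}(X)\to H^{n+h}_{d_{k+h}}(X)$ is then immediate: lift a class in the target to a form in $\mathcal K_{k+h}^{\,n+h}$, use the structural isomorphism to write it as $L^h\alpha$ with $\alpha\in\mathcal K_k^{\,n-h}$, and project. For injectivity I would invoke Poincar\'e duality: the pairing $H^{n-h}_{d_k}(X)\times H^{n+h}_{d_{-k}}(X)\to\R$, $([\alpha],[\beta])\mapsto\int_X\alpha\wedge\beta$, is well defined and, by Proposition \ref{prop:poincare} together with the Hodge theory of Proposition \ref{prop:hodge-isom}, nondegenerate; under it $L^h$ of weight $k$ is the transpose of $L^h$ of weight $-(k+h)$ (both pairings reduce to $\int_X\Omega^h\wedge\alpha\wedge\xi$), and the latter is surjective by the argument just given, so the former is injective.

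For the converse I would first show that lcs-HLC forces $[\vartheta]=0$: with $k=0$, $h=n$ it gives $H^{2n}_{d_n}(X)\simeq H^0_{d_0}(X)=\R$, while Proposition \ref{prop:poincare} identifies $H^{2n}_{d_n}(X)\simeq H^0_{d_{-n}}(X)=\{f:\,df=-n\vartheta f\}$; a nonzero such $f$ is nowhere vanishing and gives $n\vartheta=-d\log|f|$, forcing $[\vartheta]=0$. Once $[\vartheta]=0$, the gauge relations \eqref{eq:gauge-d}--\eqref{eq:gauge-delta} reduce matters to the genuinely symplectic case $\vartheta=0$, where $d_k=d$ and $\delta_k=d^\Lambda$ for all $k$; then lcs-HLC is the classical Hard Lefschetz Condition and ``every $d$-class has a $d^\Lambda$-closed representative'' is the Mathieu--Yan theorem \cite{mathieu,yan} (see also \cite{brylinski,guillemin,cavalcanti,tseng-yau-1}), which I would cite.

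I expect this converse to be the real obstacle. The reduction to the symplectic case is clean but hinges on the collapse of the Lee class; one might instead prefer an argument that stays genuinely lcs, establishing the cohomological Lefschetz decomposition $H^p_{d_k}(X)=\bigoplus_r L^r\,PH^{p-2r}_{d_{k-r}}(X)$ from the isomorphisms $L^h$ by pure linear algebra (again via the weight-into-degree device), thereby reducing to showing that each primitive cohomology class admits a primitive, hence $\delta_k$-closed, representative. This last step, the lcs analogue of Yan's primitivization lemma, is where the genuine work lies, and is the part I would expect to require the most care.
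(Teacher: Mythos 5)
Your proposal is correct, but the second implication takes a genuinely different route from the paper's. On the ``if'' direction the two proofs agree: both lift a class to $\ker d_{k+h}\cap\ker\delta_{k+h}$ and apply the unconditional Lefschetz isomorphism there (your weight-into-degree device exhibiting $\bigoplus_j\mathcal{K}_{k(j)}^{\,j}$ as an $\mathfrak{sl}(2;\R)$-module is a tidy justification of what the paper attributes tersely to ``the commutation relations''), and your transpose/duality argument for injectivity usefully completes a step the printed proof leaves implicit, since the commutative diagram there only yields surjectivity of the bottom arrow. On the converse, however, the paper never passes through $[\vartheta]=0$: it adapts Cavalcanti's symplectic argument directly to the twisted operators, first deriving from lcs-HLC the decomposition $H^{n-h}_{d_k}(X)=\imm L+P^{n-h}$ with $P^{n-h}$ the kernel of $L^{h+1}$ on cohomology, then inducting on the degree, the key step being that a $d_k$-closed $\alpha$ with $L^{h+1}[\alpha]_{d_k}=0$ can be corrected by a $d_k$-exact term to a primitive, hence $\delta_k$-closed, form --- exactly the ``primitivization lemma'' you flag as the hard part, which in fact reduces to the pointwise bijectivity of $L^{h+1}\colon\wedge^{n-h-1}X\to\wedge^{n+h+1}X$ and is short. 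Your shortcut (lcs-HLC forces $[\vartheta]=0$, gauge by \eqref{eq:gauge-d}--\eqref{eq:gauge-delta} to an honest symplectic structure, quote Mathieu--Yan) is logically sound and is precisely the route the authors acknowledge just before the proof of Theorem \ref{thm:hlc-lcslemma} but deliberately set aside in favour of a self-contained twisted argument that may be useful for weaker statements; what your route buys is brevity and a conceptual explanation of why the lcs statement collapses to the symplectic one, at the cost of the intrinsic lcs machinery (and the intermediate facts, such as Proposition \ref{prop:equalities}, that the direct proof generates along the way).
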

\begin{proof}
The proof is an adaptation to the twisted case of the one presented in \cite[Theorem 5.3]{cavalcanti}. We will recall it for completeness.
The "if" implication follows by the following commutative diagram
$$
\xymatrixcolsep{5pc}\xymatrix{
\ker d_k\cap\ker\delta_k\mid_{\Lambda^{n-h}(X)} \ar[r]^{L^h}\ar[d] &
\ker d_{k+h}\cap\ker\delta_{k+h}\mid_{\Lambda^{n+h}(X)}\ar[d]\\
H^{n-h}_{d_k}(X)  \ar[r]^{L^h} &
H^{n+h}_{d_{k+h}}(X).
} 
$$
The left and right vertical arrows are surjective by hypothesis and the top horizontal arrow
is an isomorphism by the commutation relations. Hence the bottom arrow
is surjective.

Suppose now that the lcs-Hard Lefschetz Condition holds. First of all notice that we have the following decomposition
$$
H_{d_k}^{n-h}(X)=\imm L+P^{n-h}
$$
where
$$ P^{n-h}=\left\lbrace [\alpha]\in H^{n-h}_{d_k}(X) \;:\; L^{h+1}[\alpha]=0\right\rbrace$$
and
$$\imm L=\imm \left( L\colon H^{n-h-2}_{d_{k-1}}(X)\to H^{n-h}_{d_k}(X)\right) . $$
Indeed,
let $\alpha\in\wedge^{n-h}X$ be $d_k$-closed. Take $\beta:=L^{h+1}\alpha\in\wedge^{n+h+2}X$: it is a $d_{k+h+1}$-closed form. By the lcs-HLC, there exists $\gamma\in \wedge^{n-h-2}X$ a $d_{k-1}$-closed form such that
$L^{h+2}[\gamma]_{d_{k-1}}=[\beta]_{d_{k+h+1}}$. Therefore,
$$
0=[L^{h+2}\gamma-\beta]_{d_{k+h+1}}=
[L^{h+2}\gamma-L^{h+1}\alpha]_{d_{k+h+1}}=
L^{h+1}[\Omega\wedge\gamma-\alpha]_{d_k},
$$
so $\alpha=L\gamma+(\alpha-\Omega\wedge\gamma)\in\imm L+P^{n-h}$.

Now we prove our thesis by induction on the degree of the form.
If $f$ is a $d_k$-closed smooth function then it is obviously $\delta_k$-closed.
Let $\alpha\in\wedge^1X$ a $d_k$-closed form, then $\delta_k\alpha=
d_{k-1}\Lambda\alpha-\Lambda d_k\alpha=0$. Suppose
that in every class in $H^j_{d_k}(X)$ there exists a $\delta_k$-closed representative for $j<n-h$ and we prove the thesis for degree $n-h$.
Let $\alpha\in\wedge^{n-h}X$ be $d_k$-closed; then by the previous decomposition $\alpha=L\gamma+\tilde\alpha$
with $L^{h+1}[\tilde\alpha]=0$. By induction there exists $\tilde\gamma$ a $\delta_{k-1}$-closed form such that $[\gamma]=[\tilde\gamma]$ and so, if there exists $\psi$ a $\delta_k$-closed form such that $[\psi]=[\tilde\alpha]$, then we conclude the proof.

This last fact follows by the following consideration.
If $\alpha\in\wedge^{n-h}X$ is $d_k$-closed and such that
$L^{h+1}[\alpha]_{d_k}=0$, then there exists a $\delta_k$-closed form $\psi\in\wedge^{n-h}X$ in the same $d_k$-cohomology class. Indeed, since
$L^{h+1}[\alpha]_{d_k}=0$ then $\Omega^{h+1}\wedge\alpha=d_{k+h+1}\tilde\beta$ for some $\tilde\beta\in\wedge^{n+h+1}X$.
Since $L^{h+1}\colon\wedge^{n-h-1}X\to \wedge^{n+h+1}X$ is an isomorphism, there exists $\beta\in\wedge^{n-h-1}X$ such that
$L^{h+1}\beta=\tilde\beta$.
Set $\psi:=\alpha-d_k\beta$. Clearly $d_k\psi=0$ and $[\psi]_{d_k}=[\alpha]_{d_k}$ and
$
L^{h+1}\psi=L^{h+1}\alpha-L^{h+1}d_k\beta=
d_{k+h+1}L^{h+1}\beta-L^{h+1}d_k\beta
=L^{h+1}d_k\beta-L^{h+1}d_k\beta=0$.
Hence, $\psi$ is a primitive $d_k$-closed form so it is $\delta_k$-closed by definition of $\delta_k$.
\end{proof}

\begin{prop}\label{prop:equalities}
Let $X$ be a compact manifold endowed with a lcs structure $\Omega$ with Lee form $\vartheta$. If $X$ satisfy the lcs-Hard Lefschetz condition then the following equalities hold for any $k\in\R$:
\begin{eqnarray*}
\imm\delta_{k+1}\cap\ker d_k&=&\imm d_k\cap\imm\delta_{k+1}, \\[5pt]
\imm d_k\cap\ker \delta_k&=&\imm d_k\cap\imm\delta_{k+1} .
\end{eqnarray*}
\end{prop}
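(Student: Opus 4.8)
The plan is to reduce both equalities to a single statement and prove it by Poincar\'e duality against symplectically co-closed representatives of the \emph{dual} Morse--Novikov cohomology.

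First I would dispose of the easy halves. The two right-hand sides coincide, and the inclusions $\supseteq$ are immediate: any $\xi\in\imm d_k\cap\imm\delta_{k+1}$ satisfies $d_k\xi=0$ because $d_k^2=0$ and $\delta_k\xi=0$ because $\delta_k\delta_{k+1}=0$, so it lies in both left-hand sides. Hence only $\imm\delta_{k+1}\cap\ker d_k\subseteq\imm d_k$ and $\imm d_k\cap\ker\delta_k\subseteq\imm\delta_{k+1}$ remain. These two are interchanged by the symplectic-$\star$-operator: from $\delta_m\lfloor_{\wedge^pX}=(-1)^p\star d_{n+m-p}\star$ (formula \eqref{eq:delta-dual-d}) and $\star^2=\mathrm{id}$ one checks that, in complementary degree, $\star$ carries $\imm\delta_{k+1}$, $\ker d_k$ and $\imm d_k$ to $\imm d_{n+k-h}$, $\ker\delta_{n+k-h}$ and $\imm\delta_{n+k-h+1}$; thus the first inclusion at weight $k$ in degree $h$ becomes the second inclusion at weight $n+k-h$ in degree $2n-h$. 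Since both hypothesis and conclusion are asked for every weight, it is enough to prove the first inclusion for all $k$.

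For that, fix $x=\delta_{k+1}\eta\in\wedge^hX$ with $d_kx=0$ (note $\delta_kx=0$ automatically). I want $[x]_{d_k}=0$ in $H^h_{d_k}(X)$. The spaces are finite dimensional by Proposition \ref{prop:hodge-isom}, and the de Rham pairing $\int_X(\cdot)\wedge(\cdot)\colon H^h_{d_k}(X)\times H^{2n-h}_{d_{-k}}(X)\to\C$ is perfect (Poincar\'e duality for the associated flat line bundle, compatibly with the isomorphism of Proposition \ref{prop:poincare}); moreover $\int_X x\wedge\beta$ depends only on $[\beta]_{d_{-k}}$ since $d_kx=0$. So it suffices to show $\int_X x\wedge\beta=0$ for all $d_{-k}$-closed $\beta$. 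Here the lcs-Hard Lefschetz Condition enters essentially, but through the dual cohomology: by Proposition \ref{prop:hlc-representative} applied at weight $-k$, every class in $H^{2n-h}_{d_{-k}}(X)$ has a representative $\beta$ with $\delta_{-k}\beta=0$. Formula \eqref{eq:delta-dual-d} identifies the transpose of $\delta_{k+1}$ with respect to $\int_X(\cdot)\wedge(\cdot)$ with $\delta_{-k}$ up to sign, whence
\[
\int_X x\wedge\beta=\int_X(\delta_{k+1}\eta)\wedge\beta=\pm\int_X\eta\wedge(\delta_{-k}\beta)=0 .
\]
Thus $[x]_{d_k}=0$, i.e. $x\in\imm d_k$, and the second equality follows by the $\star$-duality above.

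The main obstacle is to use the Hard Lefschetz Condition in a non-circular way. A direct attack on $x$ is self-defeating: writing $x=\delta_{k+1}\eta=d_k(\Lambda\eta)-\Lambda(d_{k+1}\eta)$ only reduces the claim to the $d_k$-exactness of the defect $\Lambda(d_{k+1}\eta)$, and via the commutation relations of Lemma \ref{lem:commutation-relations} this is equivalent to the statement itself; likewise the first and second equalities keep feeding into one another. The point of the argument above is to import the Hard Lefschetz input from the side of $H^{2n-h}_{d_{-k}}(X)$, where Proposition \ref{prop:hlc-representative} produces genuinely new information ($\delta_{-k}$-closed representatives), and to convert it into a vanishing statement through Poincar\'e duality. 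The remaining work is routine but must be done carefully: bookkeeping the weight shifts $k\mapsto -k$ and $k+1\mapsto -k$ in $d^\star$ and $\delta$ (this is exactly where the bi-differential, as opposed to genuine double-complex, structure makes the indices subtle) and checking the signs in the transpose computation and in the $\star$-duality, following \cite[Theorem 5.4]{cavalcanti} and \cite[Proposition 1.4]{merkulov}.
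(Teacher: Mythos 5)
Your proof is correct, but it takes a genuinely different route from the paper's. The paper proves the inclusion $\imm\delta_{k+1}\cap\ker d_k\subseteq\imm d_k$ directly at the level of forms, by induction on the degree: it splits $\alpha=\sum_r L^r\alpha_r$ into primitive components, handles the terms with $r>0$ by the inductive hypothesis and the commutation relation $L\delta_k-\delta_{k+1}L=d_k$, and treats the primitive component by an explicit $\Lambda^{n-h+1}L^{n-h+1}$ computation which shows $L^{n-h+1}[\delta_{k+1}\alpha]=0$ and then invokes the cohomological Lefschetz isomorphism; the second equality is declared ``similar''. You instead reduce the second inclusion to the first via the symplectic $\star$ (a reduction that is legitimate because the weight map $(k,h)\mapsto(n+k-h,2n-h)$ is a bijection and both hypothesis and conclusion are quantified over all weights), and prove the first by duality: perfectness of the twisted Poincar\'e pairing $H^h_{d_k}(X)\times H^{2n-h}_{d_{-k}}(X)\to\R$ (available from Propositions \ref{prop:hodge-isom} and \ref{prop:poincare}), the identity $\int_X\delta_{k+1}\eta\wedge\beta=\pm\int_X\eta\wedge\delta_{-k}\beta$ (which checks out: $\delta_{k+1}$ on $\wedge^{h+1}X$ is $\pm\star d_{n+k-h}\star$ while $\delta_{-k}$ on $\wedge^{2n-h}X$ is $\pm\star d_{-(n+k-h)}\star$, and the wedge-pairing transpose of $d_m$ is $\pm d_{-m}$ by Stokes), and Proposition \ref{prop:hlc-representative} applied at the \emph{dual} weight $-k$ to supply $\delta_{-k}$-closed representatives. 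Your argument is shorter and avoids the primitive decomposition entirely, and it localizes the use of the Hard Lefschetz hypothesis in a single place (Proposition \ref{prop:hlc-representative}); the price is the reliance on the non-degeneracy of the twisted pairing and the weight/sign bookkeeping, which you correctly flag as the delicate part. The paper's inductive argument stays closer to Cavalcanti's symplectic original and needs no duality input beyond the $\mathfrak{sl}(2)$ machinery already set up.
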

\begin{proof}
We prove the first equality. The second one is similar.

We need to prove that if $\alpha\in\wedge^hX$ is such that
$d_k\delta_{k+1}\alpha=0$ then $\delta_{k+1}\alpha$ is $d_k$-exact. We proceed by induction on the degree of $\alpha$.
If $\alpha$ is a smooth function then clearly $\delta_{k+1}\alpha=0$ is $d_k$-exact. Let $\alpha\in\wedge^1X$ be such that
$d_k\delta_{k+1}\alpha=0$. We have to distinguish two cases.
If $k\neq 0$ then
$\delta_{k+1}\alpha\in H^0_{d_k}(X)=0$ (see {\itshape e.g.} \cite{banyaga}).
Otherwise, if $k=0$, then $\delta_{1}\alpha$ is a $d$-closed $0$-form, so
$\delta_{1}\alpha=c$ constant.
Hence
$$
-\star d_n\star\alpha=\delta_1\alpha=c
$$
and applying $\star$ to the first and the last term in the equalities we get
$-d_n\star\alpha=c\,\mathrm{Vol}=L^nc$, but by hypothesis
$L^n\colon H^0_{d_0}(X)\to H^{2n}_{d_n}(X)$ is an isomorphism and the volume form $\mathrm{Vol}=L^n1$
can not be $d_n$-exact so $0=c=\delta_1\alpha$.

Let now $\alpha\in\wedge^hX$ be such that $d_k\delta_{k+1}\alpha=0$ and take the decomposition
$$\alpha=\sum_rL^r\alpha_r$$
with $\alpha_r$ primitive forms.
It is a straightforward computation to show that
$$
0=d_k\delta_{k+1}\alpha=\sum L^rd_{k-r}\delta_{k+1-r}\alpha_r
$$
with $d_{k-r}\delta_{k+1-r}\alpha_r$ primitive forms; hence every single term is zero, namely $d_{k-r}\delta_{k+1-r}\alpha_r=0$.
When $r>0$, by induction $\delta_{k+1-r}\alpha_r=d_{k-r}\varphi_r$ for some
$\varphi_r\in\wedge^{h-2r-2}X$.
Hence
\begin{eqnarray*}
\delta_{k+1}L^r\alpha_r&=&(L\delta_k-d_k)L^{r-1}\alpha_r\\[5pt]
&=&
L(L\delta_{k-1}-d_{k-1})L^{r-2}\alpha_r-d_kL^{r-1}\alpha_r=\cdots\\[5pt]
&=&L^r\delta_{k-r+1}\alpha_r-rd_kL^{r-1}\alpha_r\\[5pt]
&=&
L^rd_{k-r}\varphi_r-rd_kL^{r-1}\alpha_r\\[5pt]
&=&d_k\left(L^r\varphi_r-rL^{r-1}\alpha_r\right).
\end{eqnarray*}
The last case that we have to consider is when $\alpha\in\wedge^hX$ is
a primitive form.
We define $\beta\in\wedge^{h-1}X$ as
$$
L^{n-h+1}\beta=d_{k+1+n-h}L^{n-h}\alpha.
$$
Notice that $\beta$ is a primitive form, indeed
$$
L^{n-h+2}\beta=Ld_{k+1+n-h}L^{n-h}\alpha=
d_{k+2+n-h}L^{n-h+1}\alpha=0
$$
because $\alpha$ is primitive.
Applying $\Lambda^{n-h+1}$ and by using \cite[Lemma 5.4]{cavalcanti}
we have that there exists a non negative constant $c_{n-h+1,h-1}$ such
that
\begin{eqnarray*}
c_{n-h+1,h-1}\beta&=&\Lambda^{n-h+1}L^{n-h+1}\beta\\[5pt]
&=&
\Lambda^{n-h+1}d_{k+1+n-h}L^{n-h}\alpha\\[5pt]
&=&
\Lambda^{n-h}(d_{k+n-h}\Lambda-\delta_{k+1+n-h})L^{n-h}\alpha=
\cdots\\[5pt]
&=&\left(d_k\Lambda^{n-h+1}-(n-h+1)\delta_{k+1}\Lambda^{n-h}\right)L^{n-h}
\alpha\\[5pt]
&=&-(n-h+1)\delta_{k+1}\Lambda^{n-h}L^{n-h}\\[5pt]
&=&-(n-h+1)\delta_{k+1}c_{n-h,h}\alpha.
\end{eqnarray*}
Applying $L^{n-h+1}$, then there exists $c\neq 0$ such that
$$
cd_{k+1+n-h}L^{n-h}\alpha=cL^{n-h+1}\beta=L^{n-h+1}\delta_{k+1}\alpha.
$$
By the lcs-HLC we have that
$$
L^{n-h+1}\colon H^{h-1}_{d_k}(X)\to H^{2n-h+1}_{d_{k+n-h+1}}(X)
$$
is an isomorphism; since
we have just proven that $L^{n-h+1}[\delta_{k+1}\alpha]=0\in
H^{2n-h+1}_{d_{k+n-h+1}}(X)$ we get that
$$
[\delta_{k+1}\alpha]=0\in H^{h-1}_{d_k}(X)
$$
namely $\delta_{k+1}\alpha$ is $d_k$-exact concluding the proof.
\end{proof}

Now we are ready to proof Theorem \ref{thm:hlc-lcslemma}.
\begin{proof}[Proof of Theorem \ref{thm:hlc-lcslemma}.]
We prove that {\itshape (1)} implies {\itshape (3)}. By hypothesis with $h=n$ and $k=-n$, we have the isomorphism $L^n \colon H^0_{-n}(X) \simeq H^{2n}_{0}(X)$, where clearly $H^{2n}_{0}(X)=H^{2n}_{dR}(X;\R)\simeq\R$. Therefore $H^0_{-n}(X)\neq0$, and this can not happen unless $\vartheta$ is exact
\cite{guedira-lichnerowicz}, \cite[Example 1.6]{haller-rybicki}. To prove the last claim, we can actually argue also as follows. We can choose a generator $f$ for $H^0_{-n}(X)$ having no zero on $M$, since it maps to the volume class by $L^n$. Therefore $df-f\vartheta=0$, that is, $\vartheta=d\lg f$ is exact.

The lcs-Lemma clearly implies the lcs-Hard Lefschetz Condition, thanks to Theorem \ref{thm:lcs-HLC}. Moreover, {\itshape (3)} clearly implies {\itshape (1)}; and {\itshape (3)} implies {\itshape (2)} because of the results in the symplectic case, \cite[Proposition 1.4]{merkulov}, \cite{guillemin}, \cite[Theorem 5.4]{cavalcanti}.

\medskip

For the sake of completeness, now we give also a  proof of the fact that {\itshape (1)} implies {\itshape (2)}; this may possibly be useful if one needs weaker statements.
Suppose that the lcs-Hard Lefschetz Condition holds.
By Proposition \ref{prop:equalities} we are reduced to prove that
$$
\imm d_k\cap \imm \delta_{k+1}=\imm d_k\delta_{k+1}.
$$
Let $\alpha^p=d_k\gamma^{p-1}=\delta_{k+1}\beta^{p+1}\in\wedge^pX$;
we prove that $\alpha^p=d_k\delta_{k+1}\eta$ for some $\eta\in\wedge^pX$.
We prove it by induction on the degree of the form. For $p=0$ and $p=\dim X$,
it is obvious.

For $p=\dim X-1=:2n-1$, we have $d_{k+1}\beta^{2n}=0$ for degree reasons.
Hence, by Proposition \ref{prop:hlc-representative}
there exists $\tilde\beta^{2n}$ such that
$\delta_{k+1}\tilde\beta^{2n}=0$ and
$\beta^{2n}=\tilde\beta^{2n}+d_{k+1}\tau^{2n-1}$ for some $\tau^{2n-1}$.
So,
$$
\alpha^{2n-1}=\delta_{k+1}\beta^{2n}=\delta_{k+1}d_{k+1}\tau^{2n-1}=
d_k\delta_{k+1}(-\tau^{2n-1}).
$$
Now, suppose that the thesis holds for $p=h+2$ and we prove it for $p=h$.
Let $\alpha^h=d_k\gamma^{h-1}=\delta_{k+1}\beta^{h+1}$.
We set $\alpha^{h+2}:=d_{k+1}\beta^{h+1}$ and we get
$$
\delta_{k+1}\alpha^{h+2}=-d_k\delta_{k+1}\beta^{h+1}=0,
$$
namely $\alpha^{h+2}\in \ker\delta_{k+1}\cap \imm d_{k+1}=
\imm d_{k+1}\cap\imm\delta_{k+2}$.
Setting
$\alpha^{h+2}=d_{k+1}\beta^{h+1}=\delta_{k+2}\mu^{h+3}$,
by induction we have
$$\alpha^{h+2}=d_{k+1}\delta_{k+2}\nu^{h+2}.$$
Then
$$
d_{k+1}(\beta^{h+1}-\delta_{k+2}\nu^{h+2})=0
$$
and by Proposition \ref{prop:hlc-representative} there exists
$\tilde\beta^{h+1}\in\wedge^{h+1}X$ such that
$$
\delta_{k+1}\tilde\beta^{h+1}=d_{k+1}\tilde\beta^{h+1}=0,\quad
\beta^{h+1}=\tilde\beta^{h+1}-\delta_{k+2}\nu^{h+2}+d_{k+1}\lambda^h
$$
for some $\lambda^h\in\wedge^hX$.
So,
$$
\alpha^h=\delta_{k+1}\beta^{h+1}=\delta_{k+1}d_{k+1}\lambda^h=
d_k\delta_{k+1}(-\lambda^h)
$$
namely $\alpha^h\in\imm d_k\delta_{k+1}$.
\end{proof}
\begin{rmk}
Notice that if $X$ is a compact lcs manifold with lcs-form $\Omega$
$d_\vartheta$-exact then $\Omega^n$ would be $d_n$-exact and this is not possible
if $X$ satisfies the lcs-Hard Lefschetz condition.
\end{rmk}

\subsection{Further results}

\begin{rmk}[generic vanishing]\label{rmk:vanishing}
Let $X$ be a compact differentiable manifold, endowed with a closed non-exact $1$-form $\vartheta$.
Consider one of the following cases:
\begin{itemize}
\item $X$ is a completely-solvable solvmanifold \cite[Theorem 4.5]{millionshchikov},
\item or, more in general, $X$ is any compact differentiable manifold and $\vartheta$ is non-zero and parallel with respect to the Levi Civita connection associated to some fixed metric \cite[Theorem 4.5]{deleon-lopez-marrero-padron},
\item or, more in general, if $\vartheta$ is nowhere-vanishing \cite[Theorem 1]{pajitnov}, see also \cite[Exercise 4.5.5]{ornea-verbitsky-book}.
\end{itemize}
Then we know that $H^\bullet_{d_k}(X)=0$ except for a finite number of $k\in\R$. It follows that, if $\Omega$ is a lcs structure on $X$ with Lee form $\vartheta$, then also $H^\bullet_{\delta_k}(X)=0$, $H^\bullet_{d_k+\delta_k}(X)=0$, and $H^\bullet_{\delta_kd_k}(X)=0$ except for a finite number of $k\in\R$.
(This follows by symmetries, see Proposition \ref{prop:poincare} and Theorem \ref{thm:poincare-ABC}, and by \cite[Theorem 6.2]{angella-tardini-1}, which can be rewritten in the general context of $\Z$-graded bi-diffential vector spaces.)

In general, there is no generic vanishing, since the Euler characteristic of the Morse-Novikov complex coincides with the Euler characteristic of the manifold, as a consequence of the Atiyah-Singer index theorem, see \cite{BKg}.
\end{rmk}

\section{Twisted cohomologies of solvmanifolds}\label{sec:ex}
Recall that a {\em solvmanifold} $X=\left.\Gamma\middle\backslash G \right.$ (respectively, {\em nilmanifold}) is a compact quotient of a connected simply-connected solvable $G$ (respectively, nilpotent) Lie group by a co-compact discrete subgroup $\Gamma$.
In this section, we provide conditions on $X$ that allow to reduce the computation of the lcs cohomologies at the level of the associated Lie algebra, reducing the problem to a linear problem. We can apply these results on explicit examples in the next section.

\subsection{Hattori theorem for completely-solvable solvmanifolds}
A solvmanifold is said to be {\em completely-solvable} if the eigenvalues of the endomorphisms given by the adjoint representation of the corresponding Lie algebra are all real.
(In particular, note that nilmanifolds are completely-solvable solvmanifolds.)
In this case, the subcomplex of invariant forms inside the complex of forms induces an isomorphisms in de Rham cohomology, in fact, in Morse-Novikov cohomoogy too \cite[Corollary 4.2]{hattori}.
Here, by {\em invariant}, we mean that the lift to the Lie group is invariant with respect to the action of the group on itself given by left-translations.
In particular, it follows that, up to global conformal changes, we can assume that the Lee forms are invariant.

The Hattori result holds in fact for lcs cohomologies.

\begin{lem}\label{lem:inv-lcs-cohom}
 Let $X=\left.\Gamma\middle\backslash G\right.$ be a completely-solvable solvmanifold endowed with an invariant lcs structure. Then the inclusion of invariant forms into the space of forms induces isomorphisms at the level of lcs cohomologies.
\end{lem}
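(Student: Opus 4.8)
The plan is to realize the inclusion $\iota\colon \wedge^\bullet_{\mathrm{inv}}X \hookrightarrow \wedge^\bullet X$ of invariant forms as a morphism of the $\Z$-graded bi-differential vector space of Lemma \ref{lemma:double-cplx}, and then to check that it is a quasi-isomorphism for each of the four cohomologies, reducing the two derived cohomologies to the two edge ones. First I would fix invariant data: since $\Omega$ and $\vartheta$ are invariant, the operators $L=\Omega\wedge\sspace$, $\Lambda=-\iota_{\Omega^{-1}}$, and hence $d_k$ and $\delta_k$, all preserve $\wedge^\bullet_{\mathrm{inv}}X$; thus $\iota$ commutes with every horizontal and vertical arrow of the diagram \eqref{eq:double-cplx}, for every $k$. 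Moreover I would choose an invariant almost-complex structure $J$ compatible with $\Omega$ (any compatible linear complex structure on $\mathfrak g$ extends left-invariantly), so that the induced metric $g=\Omega(\sspace,J\ssspace)$, together with the Hodge-$*$- and the symplectic-$\star$-operators, are invariant as well, and hence commute with $\iota$.

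The edge cohomologies are then immediate. For $H^\bullet_{d_k}(X)$, the differential $d_k=d_{k\vartheta}$ is the Morse--Novikov differential of the invariant closed $1$-form $k\vartheta$, so the Hattori theorem \cite[Corollary 4.2]{hattori} applies verbatim and gives that $\iota$ induces an isomorphism $H^\bullet_{d_k}(\mathfrak g)\xrightarrow{\simeq}H^\bullet_{d_k}(X)$ for every $k\in\R$. For $H^\bullet_{\delta_k}(X)$ I would transport this through the symplectic duality: the formal identity $\delta_{h+k}\,\star=(-1)^{n+h}\,\star\, d_k$ underlying Proposition \ref{prop:poincare} holds already on invariant forms, and $\star$ is invariant with $\star^2=\mathrm{id}$, so $\star$ intertwines $\iota^*$ on $H_{d_k}$ with $\iota^*$ on $H_{\delta_{h+k}}$; since the former is an isomorphism and $\star$ is an isomorphism commuting with $\iota$, the latter is an isomorphism too, for all $k$. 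Dually, the invariant Hodge-$*$ of Theorem \ref{thm:poincare-ABC} identifies $H^{n-h}_{d_k+\delta_k}$ with $H^{n+h}_{\delta_{-k}d_{-k}}$ compatibly with $\iota$, so it suffices to treat the lcs-Bott--Chern cohomology $H^\bullet_{d_k+\delta_k}(X)$.

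The main obstacle is this last step, because harmonic representatives for $\Delta_{d_k+\delta_k}$ need not be invariant and so Proposition \ref{prop:hodge-isom} cannot be applied directly to the invariant subcomplex. Instead I would argue purely algebraically, using that $\iota$ is a morphism of $\Z$-graded bi-differential vector spaces which is an isomorphism on $H_{d_\bullet}$ and on $H_{\delta_\bullet}$ in every weight. The cleanest route is to feed these isomorphisms into the natural long exact sequences attached to such a structure relating $H^\bullet_{d_k+\delta_k}$ to $H^\bullet_{d_\bullet}$ and $H^\bullet_{\delta_\bullet}$ (in the spirit of \cite{angella-tardini-1}, rewritten for $\Z$-graded bi-differential vector spaces, compare the maps in \eqref{eq:coom}); then $\iota$ induces a morphism of these exact sequences, and the five lemma forces $\iota^*\colon H^\bullet_{d_k+\delta_k}(\mathfrak g)\to H^\bullet_{d_k+\delta_k}(X)$ to be an isomorphism as well. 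Conceptually, the content is that any failure of $\iota$ to be a quasi-isomorphism on lcs-Bott--Chern cohomology would already appear as a non-acyclic \emph{zigzag} in the mapping cone, hence be detected by $H_{d_\bullet}$ or $H_{\delta_\bullet}$; ruling this out, by carefully checking exactness and naturality of the relevant sequences in the weight-graded setting, is where the real work lies.
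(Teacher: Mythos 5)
Your proposal is correct and follows essentially the same route as the paper: Hattori's theorem for $H_{d_k}$, transport to $H_{\delta_k}$ via the invariant symplectic-$\star$, reduction of the Aeppli case to the Bott--Chern case via the invariant Hodge-$*$ of Theorem \ref{thm:poincare-ABC}, and the general algebraic transfer argument (exact sequences plus the five lemma, which is precisely the content of the results the paper cites from \cite{angella-1} and \cite{angella-kasuya-3}). The only cosmetic difference is that the paper additionally records injectivity of all four maps directly from elliptic Hodge theory with an invariant metric, à la \cite[Lemma 9]{console-fino}, whereas you let the five-lemma machinery handle both injectivity and surjectivity.
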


\begin{proof}
Since both the lcs structure $\Omega$ and the Lee form $\vartheta$ are invariant, then the operators $d_k$ and $\delta_k$ preserve the space of invariant forms.
Left-translations induce maps
$$ H^{\bullet}_{\sharp_k}(\mathfrak{g}^*) \to H^{\bullet}_{\sharp_k}(X) \;,$$
varying $\sharp_k\in \{d_k, \delta_k, d_k+\delta_k, \delta_kd_k\}$, for every $k\in\Z$;
where $H^{\bullet}_{\sharp_k}(\mathfrak{g}^*)$ denotes the cohomology of the corresponding bi-differential complex at the level of the Lie algebra $\mathfrak{g}$ of $G$, equivalently, of the space of invariant forms.
The above maps are injective, as a consequence of elliptic Hodge theory in Proposition \ref{prop:hodge-isom}, with respect to an invariant metric compatible with the lcs structure: see the argument in \cite[Lemma 9]{console-fino}.
In fact, by \cite{hattori}, under the assumption that $G$ is completely-solvable, the map
$$ H^{\bullet}_{d_k}(\mathfrak{g}^*) \to H^{\bullet}_{d_k}(X) $$
is an isomorphism. Note that, the lcs structure being invariant, the Poincar\'e isomorphism in Proposition \ref{prop:poincare} is compatible with the inclusion of invariant forms. Then also the map
$$ H^{\bullet}_{\delta_k}(\mathfrak{g}^*) \to H^{\bullet}_{\delta_k}(X) $$
is an isomorphism.
Finally, the fact that the maps
$$ H^{\bullet}_{d_k+\delta_k}(\mathfrak{g}^*) \to H^{\bullet}_{d_k+\delta_k}(X)
\quad\text{ and }\quad
H^{\bullet}_{\delta_kd_k}(\mathfrak{g}^*) \to H^{\bullet}_{\delta_kd_k}(X) $$
are isomorphisms can be deduced from the above isomorphisms for $H_{d_k}$ and $H_{\delta_k}$, see the general argument in \cite[Theorem 2.7]{angella-1} as adapted to the $\Z$-graded context in \cite[Corollary 1.3]{angella-kasuya-3}, and by Poincar\'e duality in Theorem \ref{thm:poincare-ABC}.
\end{proof}

\subsection{Mostow condition for solvmanifolds}
Consider a solvmanifold $X=\left.\Gamma \middle\backslash G \right.$, and let $\mathfrak{g}$ be its associated Lie algebra.
The isomorphism $H_{d}^\bullet(\mathfrak{g}^*)\stackrel{\simeq}{\to}H_{d}^\bullet(\Gamma\backslash G)$ holds also under the {\em Mostow condition} that $\mathrm{Ad}\,(\Gamma)$ and $\mathrm{Ad}\,(G)$ have the same Zariski closure in $\mathrm{GL}(\mathfrak{g})$ (where we understand by $\mathrm{GL}(\mathfrak{g})$ the group consisting solely of the linear isomorphisms of $\mathfrak{g}$) \cite[Corollary 8.1]{mostow}. In fact Mostow considers cohomology $H^\bullet(\Gamma\backslash G; \rho)$ with $\rho$ a representation of $G$ in a vector space $F$, assuming that $\Gamma$ is {\em $\rho$-ample} \cite[Section 6]{mostow} (say, $\rho$ is {\em $\Gamma$-admissible} in the notation of \cite[Definition 7.24]{raghunathan}.)
This means that $\rho\oplus\mathrm{Ad}$, as a representation of $G$ in $F\oplus \mathfrak{g}$, satisfy that $\overline{(\rho\oplus\mathrm{Ad})\,(\Gamma)}=\overline{(\rho\oplus\mathrm{Ad})\,(G)}$, where the closure is with respect to the Zariski topology. In this case, one has that the restriction morphism $H^{\bullet}(\mathfrak{g};\rho)\simeq H^{\bullet}(G;\rho) \to H^{\bullet}(\Gamma;\rho)$ is an isomorphism, \cite[Theorem 8.1]{mostow}, see also \cite[Theorem 7.26]{raghunathan}.
In particular the assumption holds: when $\rho$ is a unipotent representation of a nilpotent Lie group $G$; when $G$ satisfies the Mostow condition $\overline{\mathrm{Ad}\,(\Gamma)} = \overline{\mathrm{Ad}\,(G)}$ and $\rho$ is trivial; see \cite[Theorem 8.2]{mostow}.
As explicit application, we write down as the result applies to Morse-Novikov cohomologies.

\begin{prop}\label{prop:mostow}
Consider a solvmanifold satisfying the Mostow condition.
Then the inclusion of invariant forms into the space of forms induces isomorphisms at the level of Morse-Novikov cohomology with respect to any invariant Lee form.
Moreover, if $X$ is endowed with an invariant lcs structure, then the same holds true at the level of lcs cohomologies.
\end{prop}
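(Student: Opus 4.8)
The plan is to identify each Morse--Novikov complex with the cohomology of a local system on $X=\Gamma\backslash G$ and then to invoke Mostow's theorem in the form recalled above. Since $G$ is simply connected solvable it is contractible, so $X$ is a $K(\Gamma,1)$ and $H^\bullet_{d_k}(X)$ is computed by the group cohomology $H^\bullet(\Gamma;\rho_k)$ with values in the one-dimensional representation $\rho_k$ whose monodromy is $\gamma\mapsto\exp(k\int_\gamma\vartheta)$. An invariant closed $1$-form $\vartheta$ is an element of $\mathfrak{g}^*$ vanishing on $[\mathfrak{g},\mathfrak{g}]$, hence a character of $\mathfrak{g}$ that integrates to a homomorphism $\phi\colon G\to\R$; then $\rho_k=\exp(k\phi)$ extends to a representation of $G$, and the Chevalley--Eilenberg complex of $\mathfrak{g}$ with values in $\rho_k$ is precisely $(\wedge^\bullet\mathfrak{g}^*,d_k)$, so that $H^\bullet(\mathfrak{g};\rho_k)=H^\bullet_{d_k}(\mathfrak{g}^*)$. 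Under these identifications the map induced by the inclusion of invariant forms is exactly the restriction morphism $H^\bullet(\mathfrak{g};\rho_k)\simeq H^\bullet(G;\rho_k)\to H^\bullet(\Gamma;\rho_k)$, which is an isomorphism as soon as $\rho_k$ is $\Gamma$-admissible.

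The main obstacle is therefore to upgrade the Mostow condition $\overline{\mathrm{Ad}(\Gamma)}=\overline{\mathrm{Ad}(G)}=:\mathbf{A}$ (Zariski closures in $\mathrm{GL}(\mathfrak{g})$) to the admissibility of the twisted representation, that is, to $\overline{\sigma(\Gamma)}=\overline{\sigma(G)}$ for $\sigma:=\rho_k\oplus\mathrm{Ad}$. Writing $\mathbf{H}:=\overline{\sigma(G)}$ and $\mathbf{H}':=\overline{\sigma(\Gamma)}\subseteq\mathbf{H}$ inside $\mathbb{G}_m\times\mathbf{A}$, I would first note that, the homomorphic image of an algebraic group being closed, both project onto $\mathbf{A}$: $\pi(\mathbf{H}')=\overline{\mathrm{Ad}(\Gamma)}=\mathbf{A}=\pi(\mathbf{H})$ by the Mostow condition. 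Hence $\mathbf{H}'=\mathbf{H}$ reduces to the equality of the fibres $T':=\mathbf{H}'\cap(\mathbb{G}_m\times\{1\})$ and $T:=\mathbf{H}\cap(\mathbb{G}_m\times\{1\})$. The key mechanism is the following rigidity lemma, proved by compactness: a character $\mu=\exp\circ m\colon G\to\R^\times$ that is trivial on the cocompact lattice $\Gamma$ is trivial on $G$, for otherwise $m$ would descend to a continuous function on the compact manifold $\Gamma\backslash G$ with unbounded image $m(G)=\R$.

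Because $\rho_k$ takes values in $\R_{>0}$, which contains no nontrivial finite subgroup, each of $T$ and $T'$ is either trivial or all of $\mathbb{G}_m$, and the rigidity lemma forces $T'=T$. Indeed, if $T=\mathbb{G}_m$ then $\mathbf{H}=\mathbb{G}_m\times\mathbf{A}$; were $T'$ trivial, $\mathbf{H}'$ would be the graph of an algebraic character $f$ of $\mathbf{A}$, giving $\rho_k(\gamma)=f(\mathrm{Ad}\,\gamma)$ for all $\gamma\in\Gamma$, so that the character $\rho_k\cdot(f\circ\mathrm{Ad})^{-1}$ of $G$ is trivial on $\Gamma$ and hence, by the rigidity lemma, on $G$ --- contradicting $\mathbf{H}=\mathbb{G}_m\times\mathbf{A}$. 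The case $T$ trivial is dual: $\mathbf{H}$ is then a graph, and the density of $\mathrm{Ad}(\Gamma)$ in $\mathbf{A}$ forces $\mathbf{H}'$ to be the same graph. In either case $\mathbf{H}'=\mathbf{H}$, so every $\rho_k$ is $\Gamma$-admissible and the Morse--Novikov statement follows for any invariant Lee form; I expect this admissibility step to be the genuinely delicate point, the rest being formal.

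Finally, the lcs statement follows by the very same scheme as Lemma~\ref{lem:inv-lcs-cohom}, with the Hattori theorem replaced by the isomorphism for $H^\bullet_{d_k}$ just obtained. Injectivity of the map from invariant forms to all forms, for each of $d_k,\delta_k,d_k+\delta_k,\delta_kd_k$, comes from elliptic Hodge theory (Proposition~\ref{prop:hodge-isom}) for an invariant compatible metric, via the averaging argument of \cite{console-fino}. The isomorphism for $H^\bullet_{\delta_k}$ is then deduced from that for $H^\bullet_{d_{k'}}$ through the symplectic Poincar\'e duality of Proposition~\ref{prop:poincare}, which is compatible with the inclusion of invariant forms because $\Omega$ is invariant; and the isomorphisms for the lcs-Bott-Chern and lcs-Aeppli cohomologies follow from those for $H^\bullet_{d_k}$ and $H^\bullet_{\delta_k}$ by the general $\Z$-graded argument (as in \cite{angella-1, angella-kasuya-3}) together with Poincar\'e duality in Theorem~\ref{thm:poincare-ABC}.
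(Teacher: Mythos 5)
Your proposal is correct in substance and follows the same overall strategy as the paper: both identify $H^\bullet_{d_k}(X)$ with the group cohomology $H^\bullet(\Gamma;\rho_k)$ of the character $\gamma\mapsto\exp(k\int_\gamma\vartheta)$ extended to $G$ (using that $G$ is contractible, via \cite[Lemma 7.4]{raghunathan}), invoke Mostow's theorem, and then run the argument of Lemma~\ref{lem:inv-lcs-cohom} for the remaining lcs cohomologies. The difference lies in how the hypothesis of Mostow's theorem is verified. The paper only checks that $\chi$ is \emph{$\Gamma$-supported}, i.e.\ $\overline{\chi(\Gamma)}=\overline{\chi(G)}=\R^\times$, by observing that $\chi(\Gamma)$ sits in the torsion-free group $(\R_{>0},\cdot)$, so it is either trivial --- which would force all periods of $\vartheta$ to vanish and $\vartheta$ to be exact --- or infinite and hence Zariski dense; the passage from this plus the Mostow condition to full $\rho$-ampleness is delegated to \cite[Theorem 8.1]{mostow}. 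You instead prove the ampleness $\overline{(\rho_k\oplus\mathrm{Ad})(\Gamma)}=\overline{(\rho_k\oplus\mathrm{Ad})(G)}$ from scratch, by a Goursat-type reduction to the fibres $T'\subseteq T$ over the identity of $\mathbf{A}$ together with the rigidity lemma that a character of $G$ trivial on the cocompact $\Gamma$ is trivial on $G$. This is a worthwhile self-contained substitute for the citation, and the rigidity lemma is exactly the right mechanism (your graph argument is the precise reason why separate density of the two projections suffices here, which is false for general pairs of representations). One small imprecision: you assert that $T$ and $T'$ are trivial or all of $\mathbb{G}_m$ "because $\rho_k$ takes values in $\R_{>0}$", but $T'$ lives in the Zariski closure of $(\rho_k\oplus\mathrm{Ad})(\Gamma)$, not in its image, and $\R_{>0}$ is not Zariski closed, so a priori $T'$ could be $\{\pm1\}$. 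This does not break the proof --- if $T'=\{\pm1\}$ and $T=\mathbb{G}_m$, then $\mathbf{H}'$ modulo $\{\pm1\}\times\{1\}$ is a graph, giving $\rho_k(\gamma)^2=f(\mathrm{Ad}\,\gamma)$ on $\Gamma$ for an algebraic character $f$ of $\mathbf{A}$, and the same rigidity lemma applied to $\rho_{2k}\cdot(f\circ\mathrm{Ad})^{-1}$ yields the contradiction --- but you should add this case to the analysis.
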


\begin{proof}
Let $X=\left.\Gamma\middle\backslash G\right.$ be a solvmanifold such that the Mostow condition holds. Denote by $\mathfrak{g}$ its Lie algebra. Let $\vartheta$ be an invariant closed $1$-form. In the case $\vartheta$ is exact, we are reduced to the Mostow theorem \cite[Corollary 8.1]{mostow}; hence, assume $\vartheta$ is not exact.
We want to prove that the natural map $H^{\bullet}_{\vartheta}(\mathfrak{g})\to H^{\bullet}_{\vartheta}(X)$ is an isomorphism.
Let $\pi^*\vartheta=:\tilde\vartheta$ be the $d$-exact invariant $1$-form on $G$ that lifts $\vartheta$, where $\pi\colon G \to X$. Consider
$$ \rho \colon G \times \mathbb{R}\to\mathbb{R} , \qquad \rho(g)(r):=\exp\left(\int_e^g \tilde\vartheta\right) \cdot r, $$
where $\int_e^g$ is the integral over any path in $G$ connecting the identity $e$ to the element $g$; recall that $G$ is simply-connected. Since $\tilde\vartheta$ is invariant under left-translations, then $\rho$ is a representation of $G$ in $\mathbb{R}$. When restrited to $\Gamma=\pi_1(X)$, which is isomorphic to the deck group of the cover $\pi\colon G \to X$, it is equivalent to the representation
$$ \chi\colon \pi_1(X) \times \mathbb{R}\to\mathbb{R}, \qquad \chi([\gamma])(r):=\exp\left(\int_\gamma\vartheta\right)\cdot r .$$
Therefore
$$ H_{\vartheta}^\bullet(X) \simeq H^\bullet(X;L_\chi) \simeq H^\bullet(X;L_\rho) \simeq H^\bullet(\Gamma;\rho) , $$
where $L_{\rho}$ denotes the flat real line bundle associated to the representation $\rho$, and where the last isomorphism follows from \cite[Lemma 7.4]{raghunathan} since $G$ is contractible. Then, we are reduced to prove that $\chi$ is {\em $\Gamma$-supported}, that is $\overline{\chi(\Gamma)}=\overline{\chi(G)}$, where overline denotes the Zariski closure in $\mathrm{Aut}_{\mathbb{R}}(\mathbb{R})=\mathbb{R}^\times$: the statements then follows by \cite[Theorem 8.1]{mostow}. Here, the topology in $\mathbb{R}^\times$ is the one induced by $\mathbb{R}^2$ where $\mathbb{R}^\times$ is seen as a Zariski closed set. Note that $\chi(\Gamma)$ is identified with a subgroup of the torsion-free group $(\mathbb{R}_{>0},\cdot)$, hence it is either trivial or infinite. However, if it were trivial, the periods $\int_{\gamma} \vartheta$ would vanish for all $\gamma \in H_1(X)$, meaning that $\vartheta$ is exact, which is not the case.  So $\chi(\Gamma)$ is infinite. Then $\overline{\chi(\Gamma)}=\mathbb{R}^\times$, whence also $\overline{\chi(G)}=\mathbb{R}^\times$.

The last statement follows as in Lemma \ref{lem:inv-lcs-cohom}.
\end{proof}

\section{Examples}\label{sec:ex}

In this section, we discuss some examples.

\subsection{Kodaira-Thurston surface}
As an example, we consider the Kodaira-Thurston surface $X$ \cite{kodaira-surfaces-1, thurston}.
Recall that a (primary) Kodaira surface is a compact complex surface with Kodaira dimension $0$, first Betti number odd and trivial canonical bundle.
It admits both complex and symplectic structures, but it has no K\"ahler structure \cite{thurston}.
It is a homogeneous manifold of nilpotent Lie group, \cite[Theorem 1]{hasegawa-jsg}. More precisely, the connected simply-connected covering Lie group is the product of the real three dimension Heisenberg group and the real $1$-dimensional torus. Denote its Lie algebra by $\mathfrak{r}\mathfrak{h}_3=\mathfrak{g}_{3.1}\oplus\mathfrak{g}_{1}$.

We choose a co-frame of invariant $1$-forms $\{e^1, e^2, e^3, e^4\}$ with structure equations
$$
de^1 \;=\; 0 \;, \qquad
de^2 \;=\; 0 \;, \qquad
de^3 \;=\; e^1 \wedge e^2 \;, \qquad
de^4 \;=\; 0 \;.
$$
The almost-symplectic form
\begin{equation}\label{eq:lcs-KT}
\Omega \;:=\; e^1\wedge e^2+ e^3\wedge e^4
\end{equation}
is a locally conformally symplectic structure with Lee form
$$ \vartheta \;:=\; e^4 \;. $$
In fact, $\Omega=d_{\vartheta}(e^3)$ is $d_{\vartheta}$-exact.
Up to equivalence, this is the only lcs structure on the Lie algebra $\mathfrak{rh}_3$, see \cite{angella-bazzoni-parton}.
It admits a compatible complex structure $J$; more precisely, consider the almost-K\"ahler structure
$$ J e^1 \;:=\; e^2 \;, \quad J e^3 \;:=\; e^4 \qquad
\text{ and }
\qquad g \;=\; \sum_{j=1}^{4} e^j\odot e^j \;. $$

Thanks to Lemma \ref{lem:inv-lcs-cohom}, we can compute the lcs cohomologies of the Kodaira-Thurston surface. (As a matter of notation, we have shortened, {\itshape e.g.} $e^{124}:=e^1\wedge e^2\wedge e^4$. Computations have been performed with the help of Sage \cite{sage}.)
\begin{prop}
 The lcs cohomologies of the Kodaira-Thurston surface endowed with the lcs structure in \eqref{eq:lcs-KT} are summarized in Table \ref{table:lcs-cohom-rh-3}.
\end{prop}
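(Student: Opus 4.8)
The plan is to invoke Lemma \ref{lem:inv-lcs-cohom}, which reduces the computation of all four lcs cohomologies of the Kodaira-Thurston surface to a purely linear-algebraic computation at the level of the Lie algebra $\mathfrak{rh}_3 = \mathfrak{g}_{3.1} \oplus \mathfrak{g}_1$. Indeed, the Kodaira-Thurston surface is a nilmanifold, hence a completely-solvable solvmanifold, and both the lcs form $\Omega$ in \eqref{eq:lcs-KT} and the Lee form $\vartheta = e^4$ are invariant, so the hypotheses of the Lemma are met. Thus it suffices to compute, for each weight $k \in \R$, the cohomologies $H^\bullet_{d_k}$, $H^\bullet_{\delta_k}$, $H^\bullet_{d_k+\delta_k}$, $H^\bullet_{\delta_k d_k}$ of the finite-dimensional bi-differential complex $(\wedge^\bullet \mathfrak{g}^*, d_k, \delta_k)$.

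First I would write down the operator $d_k = d - k\vartheta \wedge \sspace = d - k\, e^4 \wedge \sspace$ explicitly on the $16$-dimensional exterior algebra generated by $e^1, e^2, e^3, e^4$, using the structure equations (only $de^3 = e^1 \wedge e^2$ is nonzero). For example, $d_k e^1 = -k\, e^{14}$, $d_k e^3 = e^{12} - k\, e^{34}$, $d_k e^4 = 0$, and so on by the Leibniz rule \eqref{eq:gauge-d}. Next I would assemble the operator $\delta_k = d_{k-1}\Lambda - \Lambda d_k$ using $\Lambda = -\iota_{\Omega^{-1}}$ for $\Omega = e^{12} + e^{34}$; here I would record the action of $\Lambda$ on each monomial (it lowers degree by $2$, sending $e^{12}, e^{34} \mapsto 1$, $e^{1234} \mapsto$ a combination of $e^{12}, e^{34}$, etc.). With $d_k$ and $\delta_k$ in hand as matrices in each degree, the four cohomologies are obtained by the standard rank computations of kernels and images, which is exactly what the Sage computation performs.

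The main qualitative point to extract, and the place where the weight $k$ matters, is the distinction between $k = 0$ and $k \neq 0$: for $k \neq 0$ the twisting by $e^4$ typically kills much of the cohomology (compare the generic-vanishing phenomenon in Remark \ref{rmk:vanishing}), whereas at $k = 0$ one recovers the ordinary symplectic Tseng-Yau cohomology. I would therefore organize Table \ref{table:lcs-cohom-rh-3} by separating these cases and displaying representatives in the shortened monomial notation ($e^{124} := e^1 \wedge e^2 \wedge e^4$, etc.). The Poincaré-type dualities of Proposition \ref{prop:poincare} and Theorem \ref{thm:poincare-ABC} furnish useful consistency checks relating the entries across complementary degrees and weights.

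The computation itself is entirely routine linear algebra in a fixed $16$-dimensional space, so there is no real conceptual obstacle; the only genuine source of difficulty is bookkeeping. In particular, one must keep the weight dependence of the image of $\delta_{k+1}$ straight when forming the lcs-Bott-Chern and lcs-Aeppli groups, since these involve the composite operators $\delta_{k+1} d_{k+1}$ and the pair $(\imm d_k, \imm \delta_{k+1})$ at neighbouring weights rather than a single differential. Automating this in Sage, as the authors do, removes the risk of sign or indexing errors, and the resulting ranks are then simply transcribed into the table.
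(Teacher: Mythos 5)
Your proposal matches the paper's proof exactly: the paper simply invokes Lemma \ref{lem:inv-lcs-cohom} (the Kodaira--Thurston surface being a nilmanifold with invariant $\Omega$ and $\vartheta=e^4$) and then performs the finite-dimensional linear-algebra computation on $\wedge^\bullet\mathfrak{g}^*$ with Sage, which is precisely your plan. (Only a cosmetic caveat: with the convention $d_k\alpha=d\alpha-k\,\vartheta\wedge\alpha$ one gets $d_ke^1=k\,e^{14}$ and $d_ke^3=e^{12}+k\,e^{34}$, so watch the signs in your sample evaluations, though this does not affect the method.)
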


\begin{center}
\begin{table}[ht]
 \centering
{\resizebox{\textwidth}{!}{
\begin{tabular}{>{$\mathbf\bgroup}c<{\mathbf\egroup$} >{$\mathbf\bgroup}c<{\mathbf\egroup$} || >{$}c<{$} | >{$}c<{$} | >{$}c<{$} | >{$}c<{$} | >{$}c<{$} ||}
\toprule
\multicolumn{2}{c||}{$\mathbf{H^h_{\sharp_{k}}}$} & \mathbf{k=-2} & \mathbf{k=-1} & \mathbf{k=0} & \mathbf{k=1} & \mathbf{k=2} \\
\toprule
\multirow{4}{*}{$\mathbf{h=0}$}
& H^0_{d_k} & - & - & \langle 1 \rangle & - & - \\
& H^0_{\delta_k} & \langle 1 \rangle & - & - & - & - \\
& H^0_{d_k+\delta_k} & - & - & \langle 1 \rangle & - & - \\
& H^0_{\delta_kd_k} & \langle 1 \rangle & - & - & - & - \\
\midrule[0.02em]
\multirow{4}{*}{$\mathbf{h=1}$}
& H^1_{d_k} & - & - & \langle e^1, e^2, e^4 \rangle & - & - \\
& H^1_{\delta_k} & - & \langle e^1, e^2, e^3 \rangle & - & - & - \\
& H^1_{d_k+\delta_k} & \langle e^4 \rangle & - & \langle e^1, e^2, e^4 \rangle & - & - \\
& H^1_{\delta_kd_k} & - & \langle e^1, e^2, e^3 \rangle & - & \langle e^3 \rangle & - \\
\midrule[0.02em]
\multirow{4}{*}{$\mathbf{h=2}$}
& H^2_{d_k} & - & - & \langle e^{13}, e^{14}, e^{23}, e^{24} \rangle & - & - \\
& H^2_{\delta_k} & - & - & \langle e^{13}, e^{14}, e^{23}, e^{24} \rangle & -
& - \\
& H^2_{d_k+\delta_k} & - & \langle e^{14}, e^{24}, e^{12}-e^{34}\rangle &
\langle e^{13}, e^{14}, e^{23}, e^{24} \rangle & \langle e^{12}+e^{34}\rangle & - \\
& H^2_{\delta_kd_k} & - & \langle e^{12}+e^{34}\rangle & \langle e^{13}, e^{14}, e^{23}, e^{24} \rangle & \langle e^{13}, e^{23},e^{12}-e^{34} \rangle & - \\
\midrule[0.02em]
\multirow{4}{*}{$\mathbf{h=3}$}
& H^3_{d_k} & - & - & \langle e^{123}, e^{134}, e^{234} \rangle & - & - \\
& H^3_{\delta_k} & - & - & - & \langle e^{124}, e^{134}, e^{234} \rangle & - \\
& H^3_{d_k+\delta_k} & - & \langle e^{124} \rangle & - & \langle e^{124}, e^{134}, e^{234} \rangle & - \\
& H^3_{\delta_kd_k} & - & - & \langle e^{123},e^{134}, e^{234} \rangle & - & \langle e^{123} \rangle \\
\midrule[0.02em]
\multirow{4}{*}{$\mathbf{h=4}$}
& H^4_{d_k} & - & - & \langle e^{1234} \rangle & - & - \\
& H^4_{\delta_k} & - & - & - & - & \langle e^{1234} \rangle \\
& H^4_{d_k+\delta_k} & - & - & - & - & \langle e^{1234} \rangle \\
& H^4_{\delta_kd_k} & - & - & \langle e^{1234} \rangle & - & - \\
\midrule[0.02em]
\bottomrule
\end{tabular}
}}
\caption{The lcs cohomologies of the Kodaira-Thurston surface. (Just non-trivial cohomology groups are reported.)}
\label{table:lcs-cohom-rh-3}
\end{table}
\end{center}

\begin{center}
\begin{table}[ht]
 \centering
\begin{tabular}{>{$\mathbf\bgroup}c<{\mathbf\egroup$} >{$\mathbf\bgroup}c<{\mathbf\egroup$} || >{$}c<{$} | >{$}c<{$} | >{$}c<{$} | >{$}c<{$} | >{$}c<{$} ||}
\toprule
\multicolumn{2}{c||}{$\dim\mathbf{H^h_{\sharp_{k}}}$} & \mathbf{k=-2} & \mathbf{k=-1} & \mathbf{k=0} & \mathbf{k=1} & \mathbf{k=2} \\
\toprule
\multirow{4}{*}{$\mathbf{h=0}$} & H^0_{d_k} & - & - & 1 & - & - \\
& H^0_{\delta_k} & 1 & - & - & - & - \\
& H^0_{d_k+\delta_k} & - & - & 1 & - & - \\
& H^0_{\delta_kd_k} & 1 & - & - & - & - \\
\midrule[0.02em]
\multirow{4}{*}{$\mathbf{h=1}$} & H^1_{d_k} & - & - & 3 & - & - \\
& H^1_{\delta_k} & - & 3 & - & - & - \\
& H^1_{d_k+\delta_k} & 1 & - & 3 & - & - \\
& H^1_{\delta_kd_k} & - & 3 & - & 1 & - \\
\midrule[0.02em]
\multirow{4}{*}{$\mathbf{h=2}$} & H^2_{d_k} & - & - & 4 & - & - \\
& H^2_{\delta_k} & - & - & 4 & - & - \\
& H^2_{d_k+\delta_k} & - & 3 & 4 & 1 & - \\
& H^2_{\delta_kd_k} & - & 1 & 4 & 3 & - \\
\midrule[0.02em]
\multirow{4}{*}{$\mathbf{h=3}$} & H^3_{d_k} & - & - & 3 & - & - \\
& H^3_{\delta_k} & - & - & - & 3 & - \\
& H^3_{d_k+\delta_k} & - & 1 & - & 3 & - \\
& H^3_{\delta_kd_k} & - & - & 3 & - & 1 \\
\midrule[0.02em]
\multirow{4}{*}{$\mathbf{h=4}$} & H^4_{d_k} & - & - & 1 & - & - \\
& H^4_{\delta_k} & - & - & - & - & 1 \\
& H^4_{d_k+\delta_k} & - & - & - & - & 1 \\
& H^4_{\delta_kd_k} & - & - & 1 & - & - \\
\midrule[0.02em]
\bottomrule
\end{tabular}
\caption{Summary of the dimensions of the lcs cohomologies of the Kodaira-Thurston surface. (Just non-trivial cohomology groups are reported.)}
\label{table:dim-lcs-cohom-rh-3}
\end{table}
\end{center}

\subsection{Lie algebra \texorpdfstring{$\mathfrak{d}_{4}$}{d4}}
As a further example, we study the Lie algebra $\mathfrak{d}_4=\mathfrak{g}_{4.8}^{-1}$, that is, the Lie algebra associated to the Inoue surface of type $S^+$ \cite{inoue}.
It is completely-solvable. It has structure equations $(14,-24,12,0)$, namely, there exists a basis $\{e_1,e_2,e_3,e_4\}$ such that the dual basis $\{e^1,e^2,e^3,e^4\}$ satisfies
$$ d e^1 \;=\; e^1\wedge e^4\;, \quad de^2 \;=\; - e^2\wedge e^4\;, \quad de^3 \;=\; -e^1\wedge e^2\;, \quad de^4\;=\;0 \;.$$

Consider the lcs structure
$$ \Omega \;:=\; e^1\wedge e^2 + e^3 \wedge e^4 \qquad \text{ with Lee form } \vartheta \;:=\;-e^4 \;.$$
In fact, $\Omega=d_\vartheta(-e^3)$.

The results for the lcs cohomologies are summarized in Tables \ref{table:lcs-cohom-d4} and \ref{table:dim-lcs-cohom-d4}.

\begin{center}
\begin{table}[ht]
 \centering
{\resizebox{\textwidth}{!}{
\begin{tabular}{>{$\mathbf\bgroup}c<{\mathbf\egroup$} >{$\mathbf\bgroup}c<{\mathbf\egroup$} || >{$}c<{$} | >{$}c<{$} | >{$}c<{$} | >{$}c<{$} | >{$}c<{$} ||}
\toprule
\multicolumn{2}{c||}{$\mathbf{H^h_{\sharp_{k}}}$} & \mathbf{k=-2} & \mathbf{k=-1} & \mathbf{k=0} & \mathbf{k=1} & \mathbf{k=2} \\
\toprule
\multirow{4}{*}{$\mathbf{h=0}$}
& H^0_{d_k} & - & - & \langle 1 \rangle & - & - \\
& H^0_{\delta_k} & \langle 1 \rangle & - & - & - & - \\
& H^0_{d_k+\delta_k} & - & - & \langle 1 \rangle & - &  - \\
& H^0_{\delta_kd_k} & \langle 1 \rangle & - & - & - & - \\
\midrule[0.02em]
\multirow{4}{*}{$\mathbf{h=1}$}
& H^1_{d_k} & - & \langle e^2 \rangle & \langle e^4 \rangle & \langle e^1 \rangle & - \\
& H^1_{\delta_k} & \langle e^2 \rangle & \langle e^3 \rangle & \langle e^1 \rangle & - & -  \\
& H^1_{d_k+\delta_k} & \langle e^4 \rangle & \langle e^2\rangle & \langle e^4 \rangle & \langle e^1 \rangle &  - \\
& H^1_{\delta_kd_k} & \langle e^2 \rangle & \langle e^3 \rangle & \langle e^1 \rangle & \langle e^3 \rangle & - \\
\midrule[0.02em]
\multirow{4}{*}{$\mathbf{h=2}$} & H^2_{d_k} & - & \langle e^{23}, e^{24} \rangle & - & \langle e^{13}, e^{14} \rangle & - \\
& H^2_{\delta_k} & - & \langle e^{23}, e^{24} \rangle & - & \langle e^{13}, e^{14} \rangle & - \\
& H^2_{d_k+\delta_k} & \langle e^{24} \rangle & \langle e^{23}, e^{24},e^{12}-e^{34} \rangle & \langle e^{14} \rangle & \langle e^{13}, e^{14},e^{12}+e^{34} \rangle & - \\
& H^2_{\delta_kd_k} & - & \langle e^{23}, e^{24},e^{12}+e^{34} \rangle & \langle e^{23} \rangle  & \langle e^{13}, e^{14},e^{12}-e^{34} \rangle & \langle e^{13} \rangle  \\
\midrule[0.02em]
\multirow{4}{*}{$\mathbf{h=3}$}
& H^3_{d_k} & - & \langle e^{234} \rangle & \langle e^{123} \rangle & \langle e^{134} \rangle & - \\
& H^3_{\delta_k} & - & - & \langle e^{234} \rangle & \langle e^{124} \rangle & \langle e^{134} \rangle \\
& H^3_{d_k+\delta_k} & - & \langle e^{124} \rangle & \langle e^{234} \rangle & \langle e^{124} \rangle & \langle e^{134} \rangle \\
& H^3_{\delta_kd_k} & - & \langle e^{234} \rangle & \langle e^{123} \rangle & \langle e^{134} \rangle & \langle e^{123} \rangle \\
\midrule[0.02em]
\multirow{4}{*}{$\mathbf{h=4}$}
& H^4_{d_k} & - & - & \langle e^{1234} \rangle & - & - \\
& H^4_{\delta_k} & - & - & - & - & \langle e^{1234} \rangle \\
& H^4_{d_k+\delta_k} & - & - & - & - & \langle e^{1234} \rangle \\
& H^4_{\delta_kd_k} & - & - & \langle e^{1234} \rangle & - & - \\
\midrule[0.02em]
\bottomrule
\end{tabular}
}}
\caption{The lcs cohomologies of the Inoue surface of type $S^+$. (Just non-trivial cohomology groups are reported.)}
\label{table:lcs-cohom-d4}
\end{table}
\end{center}

\begin{center}
\begin{table}[ht]
 \centering
\begin{tabular}{>{$\mathbf\bgroup}c<{\mathbf\egroup$} >{$\mathbf\bgroup}c<{\mathbf\egroup$} || >{$}c<{$} | >{$}c<{$} | >{$}c<{$} | >{$}c<{$} | >{$}c<{$} ||}
\toprule
\multicolumn{2}{c||}{$\dim\mathbf{H^h_{\sharp_{k}}}$} & \mathbf{k=-2} & \mathbf{k=-1} & \mathbf{k=0} & \mathbf{k=1} & \mathbf{k=2}  \\
\toprule
\multirow{4}{*}{$\mathbf{h=0}$}
& H^0_{d_k} & - & - & 1 & - & - \\
& H^0_{\delta_k} & 1 & - & - & - & - \\
& H^0_{d_k+\delta_k} & - & - & 1 & - & - \\
& H^0_{\delta_kd_k} & 1 & - & - & - & - \\
\midrule[0.02em]
\multirow{4}{*}{$\mathbf{h=1}$}
& H^1_{d_k} & - & 1 & 1 & 1 & - \\
& H^1_{\delta_k} & 1 & 1 & 1 & - & - \\
& H^1_{d_k+\delta_k} & 1 & 1 & 1 & 1 & - \\
& H^1_{\delta_kd_k} & 1 & 1 & 1 & 1 & - \\
\midrule[0.02em]
\multirow{4}{*}{$\mathbf{h=2}$}
& H^2_{d_k} & - & 2 & - & 2 & - \\
& H^2_{\delta_k} & - & 2 & - & 2 &  - \\
& H^2_{d_k+\delta_k} & 1 & 3 & 1 & 3 & - \\
& H^2_{\delta_kd_k} & - & 3 &  1 & 3 & 1 \\
\midrule[0.02em]
\multirow{4}{*}{$\mathbf{h=3}$}
& H^3_{d_k} & - & 1 & 1 & 1 & - \\
& H^3_{\delta_k} & - & - & 1 & 1 & 1 \\
& H^3_{d_k+\delta_k} & - & 1 & 1 & 1 & 1 \\
& H^3_{\delta_kd_k} & - & 1 & 1 & 1 & 1 \\
\midrule[0.02em]
\multirow{4}{*}{$\mathbf{h=4}$} & H^4_{d_k} & - & - & 1 & - & - \\
& H^4_{\delta_k} & - & - & - & - & 1 \\
& H^4_{d_k+\delta_k} & - & - & - & - & 1 \\
& H^4_{\delta_kd_k} & - & - & 1 & - & - \\
\midrule[0.02em]
\bottomrule
\end{tabular}
\caption{Summary of the dimensions of the lcs cohomologies of the Inoue surface of type $S^+$. (Just non-trivial cohomology groups are reported.)}
\label{table:dim-lcs-cohom-d4}
\end{table}
\end{center}

\subsection{Inoue surfaces $S^0$}
We prove here that the Inoue surfaces of type $S^0$ satisfy the Mostow condition, and then Proposition \ref{prop:mostow} applies for them. This is in accord with the results in \cite{otiman-2} by the second-named author. Since the Inoue surfaces of type $S^{\pm}$ are completely-solvable then the Hattori theorem \cite[Corollary 4.2]{hattori} applies.

\begin{prop}\label{prop:mostow-inoue}
Inoue surfaces of type $S^0$ satisfy the Mostow condition.
\end{prop}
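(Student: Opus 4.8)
The plan is to verify the Mostow condition directly from the explicit solvmanifold presentation of $S^0$ and then invoke Proposition \ref{prop:mostow}. Recall that an Inoue surface of type $S^0$ is a solvmanifold $\Gamma\backslash G$, where $G=\R\ltimes_\phi\R^3$ with $\phi(s)=\exp(sA)$ for a matrix $A$ whose time-one map $M:=\exp(A)$ lies in $\mathrm{SL}(3;\Z)$ and has eigenvalues $\alpha,\beta,\bar\beta$ with $\alpha>1$ real and $\beta\notin\R$; the lattice is $\Gamma=\Z\ltimes\Lambda$ with $\Lambda\subset\R^3$ a cocompact lattice preserved by $M$. First I would write down $\mathrm{Ad}(G)$ and $\mathrm{Ad}(\Gamma)$ on $\mathfrak{g}=\R e_0\oplus\R^3$: the abelian ideal $\R^3$ is the nilradical, so the unipotent (translation) part of both $\mathrm{Ad}(G)$ and $\mathrm{Ad}(\Gamma)$ is supported on the same block, and since a full lattice is Zariski dense in $\R^3$ the two unipotent parts share the same Zariski closure. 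Hence the Mostow condition $\overline{\mathrm{Ad}(\Gamma)}=\overline{\mathrm{Ad}(G)}$ reduces to the single comparison of semisimple parts $\overline{\{M^n:n\in\Z\}}=\overline{\{\exp(sA):s\in\R\}}$ inside $\mathrm{GL}(3)$, the closures being Zariski closures.

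Next I would diagonalise over $\C$ and describe both Zariski closures through their character lattices. Both $M$ and $\exp(sA)$ are semisimple with the same eigenlines, so each closure is the diagonalisable subgroup of the diagonal torus whose vanishing characters record the multiplicative relations among the eigenvalues: the lattice $\{(a,b,c)\in\Z^3: a\log\alpha+b\log\beta+c\log\bar\beta=0\}$ for the one-parameter group, and $\{(a,b,c)\in\Z^3:\alpha^a\beta^b\bar\beta^c=1\}$ for the cyclic group. Separating modulus and argument and using $\det M=\alpha\beta\bar\beta=1$, hence $|\beta|=\alpha^{-1/2}$, the modulus equation forces $a=(b+c)/2$ and the argument equation reads $(b-c)\theta\in2\pi\Z$, where $e^{i\theta}:=\beta/|\beta|$. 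Provided $e^{i\theta}$ is not a root of unity this forces $b=c$, whence $a=b=c$ and both lattices collapse to $\Z\cdot(1,1,1)$; thus both closures equal the codimension-one subtorus cut out by $\det=1$ and they coincide.

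The main obstacle is therefore the arithmetic step: showing that $e^{i\theta}=\beta/|\beta|$ is not a root of unity. I would argue by contradiction: if $e^{iN\theta}=1$ for some $N\geq1$, then $\beta^N=|\beta|^N$ is real, hence fixed by complex conjugation; since complex conjugation is the Galois automorphism exchanging $\beta$ and $\bar\beta$, the subfield $\Q(\beta^N)$ is a real subfield of $K:=\Q(\beta)$. Here the arithmetic hypothesis enters: $K$ is a cubic field of signature $(1,1)$, so $[K:\Q]=3$ is prime and $K$ admits no proper intermediate extension, in particular no totally real one; since $K\not\subset\R$ we conclude $\Q(\beta^N)=\Q$, so $\beta^N\in\Z$, being also an algebraic integer. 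But $0<|\beta^N|=\alpha^{-N/2}<1$, which is impossible for a nonzero integer. This contradiction shows that $e^{i\theta}$ is not a root of unity and completes the comparison of the two closures. With the Mostow condition established, the conclusion follows from Proposition \ref{prop:mostow}; I expect the delicate points to be the clean reduction to the semisimple block and the fact that it is precisely the prime degree of $K$ that rules out a nontrivial multiplicative relation among the eigenvalues.
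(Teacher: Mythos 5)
Your proof is correct, but it takes a genuinely different route from the paper's on both halves of the argument. For the reduction to an eigenvalue condition, the paper invokes the Gorbatsevich criterion for almost-abelian solvmanifolds $\left.\Z\ltimes\Z^n\middle\backslash\R\ltimes_{\varphi}\R^n\right.$, which says the Mostow condition holds if and only if $\sqrt{-1}\pi$ is not a rational linear combination of the eigenvalues of the infinitesimal generator $Z$; after computing the spectrum of $Z$ this becomes the statement that the argument $s$ of $\beta$ is not a rational multiple of $\pi$ --- exactly your condition that $\beta/|\beta|$ is not a root of unity. You instead carry out the reduction by hand, splitting off the Zariski-dense unipotent block and comparing the character lattices of the diagonalizable groups $\overline{\langle M\rangle}$ and $\overline{\{\exp(sA)\}}$; this is self-contained (and is essentially the same bookkeeping the paper performs later in the proof of Theorem \ref{thm:mostow-ot}), at the cost of justifying the semidirect-product manipulations of Zariski closures, for which \cite[Proposition I.1.3]{borel} and the Zariski density of a lattice in a vector group suffice. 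On the arithmetic step the divergence is larger: the paper expands $\beta^{2n}$ in the basis $1,\beta,\beta^2$ via a recursion driven by the characteristic polynomial, deduces $\beta^{2n}=z_{2n}\in\Z$, and then a polynomial-division argument forces $\alpha^{2n}=z_{2n}=\alpha^{-n}$, contradicting $\alpha>1$; your argument --- $\beta^N$ real implies $\Q(\beta^N)$ is a real subfield of the cubic field $K=\Q(\beta)$, hence equals $\Q$ by primality of the degree, hence $\beta^N$ is a rational algebraic integer of modulus $\alpha^{-N/2}\in(0,1)$, absurd --- is shorter and isolates exactly the mechanism (no real intermediate extension) that the paper later axiomatizes as the hypothesis of Theorem \ref{thm:mostow-ot}. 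Two small points you should make explicit, neither of which is a real gap: the characteristic polynomial $x^3-ax^2+bx-1$ of $M$ is irreducible over $\Q$ (a rational root would have to be $\pm1$, which is neither $\alpha>1$ nor non-real), so that $[\Q(\beta):\Q]=3$; and for the one-parameter group the vanishing of a character is the exact linear relation $a\log\alpha+b\log\beta+c\log\bar\beta=0$, whose imaginary part gives $(b-c)\theta=0$ rather than $(b-c)\theta\in2\pi\Z$ --- only the cyclic group needs the root-of-unity hypothesis.
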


\begin{proof}
Let $S^0:=S^0_A$ be the Inoue surface associated to the matrix $A\in\mathrm{SL}(3;\mathbb{Z})$ with eigenvalues $\alpha>1$, $\beta$, $\bar\beta$, where $\beta\not\in\mathbb{R}$. Recall that $\alpha\not\in\Q$, otherwise $|\alpha|=1$ since $\det A=1$.

We first claim that Gorbatsevich criterion \cite[Theorem 4]{gorbatsevich} for Inoue surfaces reads as follows: $S^0$ satisfies the Mostow condition if and only if there exist $q\in\Q$ such that
\begin{equation}\label{eq:gorb-inoue}
\beta = \alpha^{-1/2}\exp(\sqrt{-1}q\pi) .
\end{equation}

Recall that Gorbatsevich criterion applies to quotients of almost-Abelian Lie groups $G=\R\ltimes_{\varphi}\R^{n}$ by lattices $\Gamma=\Z\ltimes_{\varphi}\Z^n$, where $\varphi(t)=\exp(tZ)$. Let $t_0$ be a generator of $\Z$ in $\Gamma$. Then \cite[Theorem 4]{gorbatsevich} states that $\left.\Gamma\middle\backslash G\right.$ satisfies the Mostow condition if and only if $\sqrt{-1}\pi$ is not a linear combination with rational coefficients of the elements in the spectrum of $t_0 Z$.

In our case, we look at $S^0 = \left. \Z\ltimes \Z^3 \middle \backslash \R\ltimes(\C\times\R) \right.$, where the action is
$$
\R \times (\C \times \R) \ni (t,(z,r)) \mapsto (\beta^t\cdot z, \alpha^t\cdot r) \in \C\times\R .
$$

Here $\Z^3$ is the lattice generated by the eigenvectors of $A$. 
Then we have
$$ \varphi(1) = \left(
\begin{matrix}
\Re \beta & \Im \beta & 0 \\
-\Im \beta & \Re \beta & 0 \\
0 & 0 & \alpha
\end{matrix}
\right) .$$
Since $\det A = \alpha|\beta|^2=1$, we have that
$$ \beta=\frac{1}{\sqrt{\alpha}}\exp(\sqrt{-1}s) $$
for some $s\in\R$.
Then we can take
$$
Z = \left(
\begin{matrix}
-\frac{\lg\alpha}{2} & s & 0 \\
-s & -\frac{\lg\alpha}{2} & 0 \\
0 & 0 & \lg \alpha
\end{matrix}
\right) .
$$
The eigenvalues of $Z$ are:
$$ \lg\alpha , \qquad -\frac{\lg\alpha}{2}+\sqrt{-1}s , \qquad -\frac{\lg\alpha}{2}-\sqrt{-1}s . $$
Then, $\sqrt{-1}\pi$ is a linear combination with rational coefficients of the elements in the spectrum of $Z$ if and only if there exist $x,y,z\in\Q$ such that
$$ x-\frac{1}{2}y-\frac{1}{2}z=0 \qquad \text{ and } \qquad (y-z) s = \pi , $$
namely, if and only if there exists $q\in\Q$ such that
$$ s = q \pi , $$
proving the claim.

We now prove that \eqref{eq:gorb-inoue} does not hold, for any $q\in\Q$. On the contrary, assume that $m\in\Z$ and $n\in\Z\setminus\{0\}$ satisfy
$$ \beta = \alpha^{-1/2}\exp\left(\sqrt{-1}\frac{m}{n}\pi\right) .$$
In particular, $\beta^{2n}=\bar\beta^{2n}=\alpha^{-n}$.
By considering the characteristic polynomial of $A$, that is $x^3-ax^2+bx-1$, where $a=\alpha+\beta+\bar\beta\in\Z$ and $b=\alpha\beta+\alpha\bar\beta+|\beta|^2\in\Z$, we get that $\beta^3=a\beta^2-b\beta+1$. By induction, for any $k\in\N$, $k\geq3$:
$$ \beta^k=x_k\beta^2+y_k\beta+z_k $$
where
$$ x_{k+1}=ax_k+y_k, \qquad y_{k+1}=z_k-x_kb, \qquad z_{k+1}=x_k , $$
with the base condition:
$$ x_{3}=a, \qquad y_{3}=-b, \qquad z_{3}=1 .$$
Using that $\beta\neq\bar\beta$ , equation $\beta^{2n}=\bar\beta^{2n}$ now reads as
$$ x_{2n}(\beta+\bar\beta)+y_{2n} =0 .$$
Using that $a-\alpha=\beta+\bar\beta$ we get
$$ ax_{2n}+y_{2n}=x_{2n}\alpha ,$$
where the left-hand side is $x_{2n+1}\in\Z$ and the right-hand side is the product of $x_{2n}\in\Z$ and of $\alpha\in\R\setminus\Q$. Hence we get that $x_{2n}=y_{2n}=0$, and then $\beta^{2n}=\bar\beta^{2n}=\alpha^{-n}=z_{2n}\in\Z$.

Consider now the polynomial $x^{2n}-z_{2n}\in\Z[x]$, and its division by the characteristic polynomial of $A$ in $\Q[x]$:
$$ x^{2n}-z_{2n} = Q(x) \cdot (x^3-ax^2+bx-1)+R(x) , $$
where $Q(x)\in\Q[x]$ and $R(x)\in\Q_{\leq2}[x]$.
If $R(x)$ had positive degree, then $R(\beta)=R(\bar\beta)=0$ would imply $\beta+\bar\beta\in\Q$, which is not true since $\beta+\bar\beta=a-\alpha$ with $\alpha$ irrational. Then $R(x)=0$. It follows that $\alpha^{2n}=z_{2n}$, too. But this is a contradiction with $\alpha^{-n}=z_{2n}$, since $\alpha>1$.
\end{proof}

\subsection{Oeljeklaus-Toma manifolds with precisely one complex place}
We now extend the above results to Oeljeklaus-Toma manifolds \cite{oeljeklaus-toma} with precisely one complex place and $s$ real place. Note that this is the case when the existence of lcK metrics is known, \cite[Proposition 2.9]{oeljeklaus-toma}, see also \cite[Theorem 3.1]{vuletescu}. In case $s=1$, we recover any Inoue surfaces $S^0_A$ of type $S^0$ by taking $K=\Q(\alpha)$ and $U=\mathcal{O}_K^{*,+}$ generated by $\alpha$, the real eigenvalue of the matrix $A\in\mathrm{SL}(3;\Z)$.

\medskip

We briefly recall their construction (see \cite{oeljeklaus-toma}) and their structure as solvmanifolds (see \cite[Section 6]{kasuya-blms}).

Let $K$ be an algebraic number field. Consider the $n=s+2t$ embeddings of the field $K$ in $\C$: more precisely, the $s$ real embeddings $\sigma_1,\ldots,\sigma_s\colon K\to \R$, and the $2t$ complex embeddings $\sigma_{s+1},\ldots,\sigma_{s+t},\sigma_{s+t+1}=\overline\sigma_{s+1},\ldots,\sigma_{s+2t}=\overline\sigma_{s+t} \colon K\to\C$.
Denote by $\mathcal O_K$ the ring of algebraic integers of $K$, and by $\mathcal O_K^{*,+}$ the group of totally positive units. Let $\mathbb H:=\left\{z\in\C \;:\; \Im z>0\right\}$ denote the upper half-plane. On $\mathbb H^s\times\C^t$, consider the action $\mathcal{O}_K \circlearrowleft \mathbb H^s\times\C^t$ given by translations,
$$ T_a(w_1,\ldots,w_s,z_{s+1},\ldots,z_{s+t}) := (w_1+\sigma_1(a),\ldots,z_{s+t}+\sigma_{s+t}(a)) , $$
and the action $\mathcal{O}_K^{*,+} \circlearrowleft \mathbb H^s\times\C^t$ given by rotations,
$$ R_u(w_1,\ldots,w_s,z_{s+1},\ldots,z_{s+t}) := (w_1\cdot \sigma_1(u),\ldots,z_{s+t}\cdot \sigma_{s+t}(u)) . $$
Oeljeklaus and Toma proved in \cite[page 162]{oeljeklaus-toma} that there always exists a subgroup $U\subset\mathcal{O}_K^{*,+}$ such that the action $\mathcal{O}_K\rtimes U \circlearrowleft \mathbb{H}^s\times\C^t$ is fixed-point-free, properly discontinuous, and co-compact. The {\em Oeljeklaus-Toma manifold} (say, OT manifold) associated to the algebraic number field $K$ and to the admissible subgroup $U$ of $\mathcal O_K^{*,+}$ is
$$ X(K,U) \;:=\; \left. \mathbb H^s\times\C^t \middle\slash \mathcal O_K\rtimes U \right.$$
Moreover, $X(K,U)$ is called {\em of simple type} if there is no intermediate extension $\Q \subset K' \subset K$ such that $U$ is compatible with $K'$, too.

Oeljeklaus-Toma manifolds are in fact solvmanifolds, see \cite[Section 6]{kasuya-blms}. More precisely, consider the map
$$ \ell \colon \mathcal{O}_K^{*,+} \to \R^{s+t}, $$
$$ \ell(u) = \left( \lg \sigma_1(u), \ldots, \lg \sigma_s(u), 2\lg |\sigma_{s+1}(u)|, \ldots, 2\lg |\sigma_{s+t}(u)| \right) .$$
The rank $s$ subgroup $U$ is such that its projection on the first $s$ coordinates is a lattice in $\R^s$. Consider the basis for the subspace $\R^s$ in $\R^{s+t}$:
\begin{equation}\label{eq:basis}
\left\langle
\left(1,0,\ldots,0 , b_{11} , \ldots , b_{1t} \right) ,
\ldots,
\left(0,0,\ldots,1 , b_{s1} , \ldots , b_{st} \right)
\right\rangle .
\end{equation}
Note that, since $\prod_{j=1}^{s+t}\sigma_j(u) = 1$ being equal to the product of the roots of the minimal polynomial of the unit $u$, then for any
$$ \ell(u)=\left( \xi_1, \ldots, \xi_s, \sum_{j=1}^s b_{j1} \xi_j,\ldots, \sum_{j=1}^s b_{jt} \xi_j \right) $$
we have $\sum_{k=1}^{t} \sum_{j=1}^{s} b_{jk} \xi_j = - \sum_{j=1}^{s} \xi_j$; then, for any $j\in\{1,\ldots,s\}$,
$$ \sum_{k=1}^{t} b_{jk} = - 1 . $$
Note in particular that, if $t=1$, then any $b_{j1}=-1$.
Moreover, by definition, $2\lg|\sigma_{s+k}(u)|=\sum_{j=1}^{s}b_{jk}\xi_j$. Set $c_{jk}\in\R$ such that
$$ \sigma_{s+k}(u) = \exp\left(\frac{1}{2}\sum_{j=1}^{s}b_{jk}\xi_j + \sqrt{-1}\sum_{j=1}^{s} c_{jk}\xi_j \right) .$$
Then, we can represent
$$ X(K,U) = \left. \R^s\ltimes_{\varphi}(\R^s\times \C^t) \middle\slash U \ltimes_{\varphi} \mathcal{O}_K \right. $$
where
\begin{equation}\label{eq:varphi}
\varphi(\xi_1,\ldots,\xi_s) =
\left(
\begin{matrix}
\ddots & & & \\
& \exp(\xi_h) & & & \\
& & \ddots & & \\
& & & A_k & \\
& & & & \ddots
\end{matrix}
\right) ,
\end{equation}
where
$$ A_k := \exp\left(\frac{1}{2}\sum_{j=1}^{s}b_{jk}\xi_j + \frac{\sqrt{-1}}{2} \sum_{j=1}^{s} c_{jk} \xi_j \right) .$$
That is, we can identify
\begin{eqnarray*}
\R^s \ltimes_\varphi (\R^s\times\C^t) &=&
\left\{\left(
\begin{matrix}
\ddots & & & & & & &\vdots\\
& \exp(\xi_h) & & & & & &x_h\\
& & \ddots & & &&&\vdots\\
& & & & A_k & & & z_k \\
& & & & & \bar{A}_k && \bar{z}_k&\\
& & & & &   & \ddots&\vdots\\
& & & & &   & &1\\
\end{matrix}
\right) \right. \\[5pt]
&& \left. \;:\; \ldots,\xi_h, \ldots, x_h,\ldots \in \R, \ldots, z_k,\ldots \in \C \right\} .
\end{eqnarray*}

\medskip

We give conditions for which OT manifolds with $t=1$ satisfy Mostow condition; then Proposition \ref{prop:mostow} applies.

\begin{thm}\label{thm:mostow-ot}
Let $X(K,U)$ be an Oeljeklaus-Toma manifold with precisely one complex place. Assume that there is no field $T$ such that $\Q\subset T \subset K$ and $T$ is totally real. Then $X(K,U)$ satisfies the Mostow condition.
\end{thm}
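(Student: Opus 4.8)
\emph{Strategy.} The plan is to unwind the Mostow condition $\overline{\mathrm{Ad}(\Gamma)}=\overline{\mathrm{Ad}(G)}$ in $\mathrm{GL}(\mathfrak{g})$ into a statement about the embeddings $\sigma_j$ of $K$, and then to show that its failure would manufacture a totally real intermediate field, which the hypothesis forbids. Working with the solvmanifold presentation $G=\R^s\ltimes_\varphi(\R^s\times\C)$, $\Gamma=U\ltimes\mathcal{O}_K$ recalled above, I would first note that the nilradical $\mathfrak{n}=\R^s\times\C$ is abelian, so $\mathrm{Ad}(\mathcal{O}_K)$ is a lattice in the unipotent group $\exp(\mathrm{ad}\,\mathfrak{n})$ and is Zariski dense there by Mal'cev. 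Hence the unipotent parts of the two closures always coincide, and the Mostow condition reduces to the coincidence of the Zariski closures $\mathbf{T}_\Gamma\subseteq\mathbf{T}_G$ of the semisimple parts, i.e. of $\{\varphi(\ell(u)):u\in U\}$ inside $\{\varphi(\xi):\xi\in\R^s\}$. On the complexified nilradical the eigenvalues of these operators are precisely the embeddings: $\sigma_1(u),\dots,\sigma_s(u)$ on the $\R^s$-factor and $\sigma_{s+1}(u),\overline{\sigma_{s+1}(u)}$ on $\C$ (the rotation $R_u$ acts by $z\mapsto\sigma_{s+1}(u)\,z$). This reduction, being uniform in $s$, sidesteps the rank-one Gorbatsevich criterion used for the Inoue case, which does not apply when $s>1$.

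I would then compare the two tori through their character lattices. A character $\prod_j y_j^{m_j}w^{p}\bar w^{q}$, $m_j,p,q\in\Z$, is trivial on the connected torus $\mathbf{T}_G$ iff the associated $\R$-linear functional of $\xi$ vanishes identically, which forces $m_j=\tfrac{p+q}{2}$ for all $j$ together with $(p-q)\,\theta\equiv0$, where $\theta(\xi)=\sum_j c_{j1}\xi_j$ is the argument functional; since the complex place is genuine ($\theta\not\equiv0$), this means $p=q$. The same character is trivial on $\mathbf{T}_\Gamma$ iff $\prod_j\sigma_j(u)^{m_j}\sigma_{s+1}(u)^{p}\overline{\sigma_{s+1}(u)}^{q}=1$ for all $u\in U$. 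Taking $\log$ of the modulus and using the norm relation $2\log|\sigma_{s+1}(u)|=-\sum_j\log\sigma_j(u)$ (valid for $t=1$, where $b_{j1}=-1$), the modulus equation becomes $\sum_j(m_j-\tfrac{p+q}{2})\log\sigma_j(u)=0$; as the projection of $U$ to the first $s$ logarithmic coordinates is a full-rank lattice, this already forces $m_j=\tfrac{p+q}{2}$. Thus the moduli match automatically, and the only possible discrepancy lies in the arguments. I conclude that the Mostow condition can fail only if there is an integer $r\neq0$ with $(p-q)\,\theta(\ell(u))=(p-q)\arg\sigma_{s+1}(u)\in2\pi\Z$ for all $u$ while $\theta\not\equiv0$; equivalently, only if the homomorphism $u\mapsto\sigma_{s+1}(u)/\overline{\sigma_{s+1}(u)}\in S^1$ has finite image, i.e. only if there is $r\geq1$ with $\sigma_{s+1}(u^{r})\in\R$ for every $u\in U$.

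The heart of the proof, and the point where the arithmetic hypothesis enters, is to turn this last situation into a totally real subfield. Assume it holds and set $K^{+}:=\{x\in K:\sigma_{s+1}(x)\in\R\}=\{x\in K:\sigma_{s+1}(x)=\overline{\sigma_{s+1}(x)}\}$. Being the locus where two field embeddings agree, $K^{+}$ is a subfield of $K$, proper because $\sigma_{s+1}$ is a complex place, and $\sigma_{s+1}|_{K^{+}}$ is a real embedding. By construction $U^{r}\subseteq\mathcal{O}_{K^{+}}^{*}$, and $U^{r}$ has rank $s$; since $\mathcal{O}_{K^{+}}^{*}\subseteq\mathcal{O}_{K}^{*}$ and $K$ has unit rank $s$ (signature $(s,1)$), the group $\mathcal{O}_{K^{+}}^{*}$ has rank exactly $s$. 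By Dirichlet's unit theorem $K^{+}$ then has $s+1$ archimedean places; were one of them complex, the degree $[K^{+}:\Q]=r_1+2r_2=(s+1)+r_2$ would be $\geq s+2=[K:\Q]$, forcing $K^{+}=K$, a contradiction. Hence $K^{+}$ is totally real with $\Q\subsetneq K^{+}\subsetneq K$, contradicting the assumption that $K$ admits no totally real intermediate extension. Therefore the Mostow condition holds, and Proposition \ref{prop:mostow} applies.

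The step I expect to be the main obstacle is precisely this number-theoretic passage: converting the analytic ``finite argument-image'' condition into the algebraic inclusion $U^{r}\subseteq\mathcal{O}_{K^{+}}^{*}$, and then pinning down the signature of $K^{+}$ through its unit rank so as to force total reality and invoke the hypothesis. A secondary point needing care is the reduction in the first paragraph: one must verify that the semisimple parts of $\mathrm{Ad}(\gamma)$, $\gamma\in\Gamma$, have Zariski closure $\mathbf{T}_\Gamma$ (using that eigenvalues are conjugation-invariant), and that equality of unipotent radicals together with equality of toral parts is genuinely equivalent to the full identity $\overline{\mathrm{Ad}(\Gamma)}=\overline{\mathrm{Ad}(G)}$. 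For $s=1$ the field $K=\Q(\alpha)$ has prime degree $3$ and hence no proper intermediate field, so the hypothesis is vacuous and the argument recovers the Inoue $S^{0}$ case of Proposition \ref{prop:mostow-inoue}.
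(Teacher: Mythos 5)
Your proposal is correct, but it diverges from the paper's proof at both of its two main stages, and the comparison is instructive. For the reduction, the paper also passes to $\overline{\varphi(U)}\subseteq\overline{\varphi(\R^s)}$, but then splits $\varphi(\R^s)=\prod_h H_h$ and $\varphi(U)=\prod_h G_h$ into one-parameter subgroups indexed by a basis $u_1,\dots,u_s$ of $U$ and applies Gorbatsevich's rank-one criterion to each pair $(G_h,H_h)$ separately; this yields the \emph{sufficient} condition ``$\arg\sigma_{s+1}(u_h)\notin\pi\Q$ for every generator $u_h$'' (so your remark that Gorbatsevich ``does not apply when $s>1$'' is not quite accurate — the paper does apply it, but only obtains a one-way implication, since $\prod_h\overline{G_h}=\prod_h\overline{H_h}$ does not force $\overline{G_h}=\overline{H_h}$ factor by factor). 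Your character-lattice computation instead gives the \emph{exact} obstruction, namely the uniform condition that $u\mapsto\sigma_{s+1}(u)/\overline{\sigma_{s+1}(u)}$ have finite image on all of $U$, which is a priori much weaker than the per-generator condition. For the arithmetic endgame, the paper argues generator by generator: it first uses the no-totally-real-subfield hypothesis to show $K=\Q(u_h)$, then supposes $\beta^N=\bar\beta^N$ for $\beta=\sigma_{s+1}(u_h)$, uses the hypothesis a second time to force $\beta^N\in\Q$, and derives a contradiction by dividing $X^N-\beta^N$ by the minimal polynomial to conclude all real conjugates would be roots of unity. You instead pass directly to the equalizer field $K^{+}=\{x:\sigma_{s+1}(x)\in\R\}$, observe $U^{r}\subseteq\mathcal{O}_{K^{+}}^{*}$, and pin down the signature of $K^{+}$ via Dirichlet's unit theorem — a cleaner and more structural argument, and all the steps you flag as delicate (the semidirect-product decomposition of the Zariski closures, the density of $\mathrm{Ad}(\mathcal{O}_K)$ in the unipotent radical, the triviality criterion for characters on the two tori) check out and coincide with what the paper verifies.

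One feature of your argument deserves explicit attention: your unit-rank computation gives $[K^{+}:\Q]=r_1'+2r_2'\geq r_1'+r_2'\geq s+1$, while $K^{+}\subsetneq K$ forces $[K^{+}:\Q]\mid(s+2)$ and hence $[K^{+}:\Q]\leq\tfrac{s+2}{2}<s+1$ for $s\geq1$. So the contradiction arrives \emph{before} you ever invoke the hypothesis on totally real intermediate fields; granting your (correct) sharp reduction, the arithmetic hypothesis is not actually used. This is not an error — proving more is allowed — but since it would render the theorem's hypothesis superfluous, whereas the paper's per-generator reduction genuinely needs it (a single generator could lie in a totally real subfield and break the Gorbatsevich condition for that factor without breaking Mostow), you should state this explicitly and double-check the character-lattice step once more before claiming the stronger statement.
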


\begin{proof}
Let $X=X(K,U)=\left. \R^s\ltimes_{\varphi}(\R^s\times \C^t) \middle\slash U \ltimes_{\varphi} \mathcal{O}_K \right.$ be an Oeljeklaus-Toma manifold with precisely one complex place, namely $t=1$. In particular, note that any $U$ when $t=1$ is admissible in the sense of \cite{oeljeklaus-toma}. We use notation as described above. We want to prove that $\overline{\mathrm{Ad}\left(\R^s\ltimes_{\varphi}(\R^s\times \C^t)\right)}=\overline{\mathrm{Ad}\left(U \ltimes_{\varphi} \mathcal{O}_K\right)}$ in the Zariski topology of $\mathrm{GL}(\R^{2s+2t})$, where $\mathfrak{g}$ is the Lie algebra of $\R^s\ltimes_{\varphi}(\R^s\times \C^t)$.
In a sense, this extends the criterion of Gorbatsevich from almost Abelian Lie groups to semi-direct products  $\mathbb{R}^s\ltimes_{\varphi}(\R^s\times \C)$.

We first notice that
$$
\overline{\mathrm{Ad}(\mathbb{R}^s\ltimes_{\varphi}(\R^s\times \C))} = \overline{\mathrm{Ad}(\R^s) \ltimes \mathrm{Ad}(\R^s\times \C)} = \overline{ \overline{\mathrm{Ad}(\R^s)}\ltimes \overline{\mathrm{Ad}(\R^s\times \C)} }
$$
and
$$
\overline{\mathrm{Ad}(U \ltimes_{\varphi}\mathcal{O}_K)} = \overline{\mathrm{Ad}(U) \ltimes \mathrm{Ad}(\mathcal{O}_K)} = \overline{ \overline{\mathrm{Ad}(U)}\ltimes \overline{\mathrm{Ad}(\mathcal{O}_K)} } .
$$
This follows by the fact that the Zariski closure of a subgroup of an algebraic group is a subgroup by itself, see {\itshape e.g.} \cite[Proposition I.1.3]{borel}.
Moreover, since $\R^s\times\C$ is the nilradical of $\R^s\ltimes_{\varphi}(\R^s\times \C^t)$, then $\mathrm{Ad}(\R^s\times \C)$ is unipotent and connected, whence Zariski closed, see {\itshape e.g.} \cite[page 2]{raghunathan}. Finally, $\mathrm{Ad}(\mathcal{O}_K)$ is a maximal lattice in $\mathrm{Ad}(\R^s\times\C)$, whence $\overline{\mathrm{Ad}(\mathcal{O}_K)}=\overline{\mathrm{Ad}(\R^s\times\C)}=\mathrm{Ad}(\R^s\times\C)$, see {\itshape e.g.} \cite[Theorem 2.1]{raghunathan}.
At the end, we are reduced to show that $\overline{\mathrm{Ad}(\R^s)}$ and $\overline{\mathrm{Ad}(U)}$ are equal in $\mathrm{GL}(\R^s)$.

Notice that $\mathrm{Ad}((\xi_1, \ldots, \xi_s), 0, \ldots, 0)$ acts trivially on the $\R^s$-component of $\mathfrak{g}$ and as $\varphi(\xi_1, \ldots, \xi_s)$ on the $(\R^s \times \C)$-component, see \eqref{eq:varphi}. Therefore we are reduced to prove that the subgroups $\varphi(\R^s)$ and $\varphi(U)$ have the same Zariski closure in $\mathrm{GL}(\R^{2s+2t})$.

We take $U$ generated by $u_1,\ldots,u_s$ such that
$$ \ldots, \ell(u_h) = \left(t_1^h,\ldots, t_s^h, - (t_1^h + \cdots + t_s^h) \right), \ldots $$
with respect to the basis \eqref{eq:basis}, where $t_j^h\in\R$.
Denote by
$$ R_h := \left(\begin{matrix}
 0 & & & & & & \\
 & \ddots & & & & & \\
 & & 1 & & & & \\
 & & & \ddots & & & \\
 & & & & 0 & & \\
 & & & & & -\frac{1}{2} & c_h \\
 & & & & & -c_h & -\frac{1}{2}
\end{matrix}\right) $$
where the coefficient $1$ is at the intersection between the $h$th row and the $h$th column, (with respect to the notation above, $c_h:=c_{h1}$).
Note that $[R_h,R_m]=0$ for any $h,m\in\{1,\ldots,s\}$.
Denote
$$
G_h := \left\langle \exp\left(\sum_{j=1}^{s} t_j^h R_j\right)\right\rangle , 
\qquad
H_h := \left\langle \exp\left(t \cdot \sum_{j=1}^{s} t_j^h R_j\right)\right\rangle_{t\in\R} .
$$
Then
$$ \varphi(\R^s) = \prod_{j=1}^{s} H_j, \qquad \varphi(U) = \prod_{j=1}^{s} G_h .$$
Arguing as before, $\overline{\prod_{j=1}^{s} H_j}=\overline{\prod_{j=1}^{s} \overline{H}_j}$, and the same for $G_h$, so we are reduced to show that $H_h$ and $G_h$ have the same Zariski closure for any $h\in\{1,\ldots,s\}$.

Each $H_h$ is a $1$-parameter subgroup in $\mathrm{GL}(\R^{s+2})$ and $G_h$ is a discrete subgroup of $H_h$, so the Gorbatsevich criterion in \cite[Lemma 3]{gorbatsevich} applies. We are reduced to show that, for
$$ B^h:=\sum_{j=1}^{s} t_j^h A_j =
\left(
\begin{matrix}
\ddots & & & & \\
& t_j^h & & & \\
& & \ddots & & \\
& & & -\frac{1}{2}\sum_{j=1}^{s} t_j^h & -\sum_{j=1}^{s} c_j t_j^h \\
& & & \sum_{j=1}^{s} c_j t_j^h & -\frac{1}{2}\sum_{j=1}^{s} t_j^h\\
\end{matrix}
\right) , $$
there is no rational linear combination of the eigen-values of $B^h$ equal to $\sqrt{-1}\pi$.

Hereafter, we forget the superscript $h$. The spectrum of $B$ is:
$$ \left\{ t_1, \ldots, t_s, -\frac{1}{2}\sum_{j=1}^{s} t_j+\sqrt{-1}\sum_{j=1}^{s} c_j t_j, -\frac{1}{2}\sum_{j=1}^{s} t_j-\sqrt{-1}\sum_{j=1}^{s} c_j t_j \right\} . $$
Let us assume that there exist $\lambda_1,\ldots,\lambda_s,\eta_1,\eta_2\in\Q$ such that
\begin{eqnarray*}
\sqrt{-1}\pi &=& \sum_{h=1}^{s} \lambda_h t_h + \eta_1 \left( -\frac{1}{2}\sum_{j=1}^{s} t_j+\sqrt{-1}\sum_{j=1}^{s} c_j t_j \right) \\[5pt]
&& + \eta_2 \left( -\frac{1}{2}\sum_{j=1}^{s} t_j-\sqrt{-1}\sum_{j=1}^{s} c_j t_j \right) .
\end{eqnarray*}
Equivalently,
$$
\left\{
\begin{array}{l}
\sum_{h=1}^{s} \lambda_h t_h - \frac{1}{2}\eta_1 \sum_{j=1}^{s} t_j - \frac{1}{2}\eta_2 \sum_{j=1}^{s} t_j = 0 \\[5pt]
\left( \eta_1 - \eta_2 \right) \sum_{j=1}^{s} c_j t_j = \pi
\end{array}
\right.
$$
which yields in particular that the argument of the complex number $\sigma_{s+1}(u_h)$ is $\sum_{j=1}^{s} c_j t_j^{h} = q\pi$ for $q\in\Q$. We are reduced to show that this is not possible.

We first claim that, {\itshape under the assumption that there is no intermediate totally real field $\Q\subset T \subset K$, then $K=\Q(u_h)$, for any $h\in\{1,\ldots,s\}$}. Indeed, we first notice that $\sigma_{s+1}(u_h)\in\C\setminus\R$: otherwise, if $u_h\in\R$, then $\Q(u_h)$ would be a totally real intermediate extension, so $u_h\in\Q$ would be a positive unit; by $U$ being admissible, this is not possible.
Recall that the characteristic polynomial $f_{u_h}$ of $u_h$ is a power of the minimal polynomial $\mu_{u_h}$ of $u_h$, say $f_{u_h}=\mu_{u_h}^k$ for $k\in\N$ (see Proposition 2.6 in \cite{neukirch}). On the other hand, $f_{u_h}(X)=\prod_{j=1}^{s} \left( X-\sigma_j(u_h) \right) \cdot \left( X-\sigma_{s+1}(u_h) \right) \cdot \left( X-\overline{\sigma_{s+1}(u_h)} \right)$ has exactly two complex non-real conjugate roots. Then necessarily $k=1$, that is, $f_{u_h}=\mu_{u_h}$. In particular, $[\mathbb{Q}(u_h):\mathbb{Q}]=\deg\mu_{u_h}= [K : \mathbb{Q}]$, so $K=\mathbb{Q}(u_h)$.

Denote $\alpha_1:=\sigma_1(u_h), \ldots, \alpha_h:=\sigma_s(u_h), \beta:=\sigma_{s+1}(u_h)$, namely, the roots of the minimal polynomial $\mu_{u_h}\in\Z[X]$ of $u_h$. Assume that $\beta$ has argument given by a rational multiple of $\pi$, say, $q\pi$ with $q\in\Q$. Then there exists $N\in\N$ such that $\beta^N=\bar\beta^N$. Since $\beta$ is the root of the monic polynomial $\mu_{u_h}\in\Z[X]$ of degree $s+2$, then there exist $x_{0}, \ldots, x_{s+1}\in\Z$ such that
$$ \beta^N = x_{s+1}\beta^{s+1}+x_{s}\beta^s+\cdots+x_1\beta+x_0 . $$
Set
$$ x := x_{s+1}\beta^{s+1}+x_{s}\beta^s+\cdots+x_1\beta \in \R , $$
such that $\beta^N=\bar\beta^N=x+x_0$. In fact, $x\in\Q$. Indeed, if $x\not\in\Q$, since $\beta\not\in\R$, then $\Q(x)$ would be an intermediate totally real extension $\Q\subseteq\Q(x)\subseteq K=\Q(\beta)$, and it is not possible under the assumption. Consider the polynomial
$$ X^N - (x+x_0)\in \Q[X] . $$
Let $Q(X),R(X)\in\Q[X]$ be such that
$$ X^N - (x+x_0) = Q(X) \cdot \mu_{u_h}(X) + R(X) $$
with $\deg R(X) < s+2$. One has that $R(\beta)=R(\bar\beta)=0$; then $\mu_{u_h}(X)$ divides $R(X)$, with $\deg \mu_{u_h}(X) < \deg R(X)$; then $R(X)=0$. It follows that any $\alpha_j$ is a root of $X^N-(x+x_0)$, that is, $\alpha_1^N=\cdots=\alpha_s^N=\beta^N=\bar\beta^N$. On the other side, recall that $\alpha_1\cdot\cdots\cdot|\beta|^2=1$. It follows that
$$ \left( \alpha_1 \cdot \cdots \cdot \alpha_s \right)^N = \left( \alpha_1 \cdot \cdots \cdot \alpha_s \right)^{-\frac{Ns}{2}} . $$
The $\alpha_j$s being real, this yields
$$ |\beta|^2 = \frac{1}{\alpha_1 \cdot \cdots \cdot \alpha_s} = 1 , $$
that is, $\beta=\exp(\sqrt{-1}q\pi)$.
This says that actually $\beta^N=1$, so any $\alpha_j$ would be a real root of $X^N-1$. But this is not possible, since the $\alpha_j$s are irrational numbers.
\end{proof}

\begin{rmk}
Note in particular that the assumption $s+2$ prime assures that there is no intermediate extension, and so in particular no intermediate totally real extension as required in Theorem \ref{thm:mostow-ot}.

Moreover we show now an explicit example of an Oeljeklaus-Toma manifold $X(K, U)$ of type $(2,1)$ which satisfies the technical condition in Theorem \ref{thm:mostow-ot}. 

Let $f(X) = X^4 -X - 1 \in \Z[X]$; it is irreducible, since its reduction modulo $p=2$ prime, that is, $X^4-X-1\in \Z_2[X]$, is irreducible in $\Z_2[X]$.

{\noindent{\bf Claim 1:}} $f$ has two real roots and two complex (conjugate) roots.\\
Indeed, by Darboux theorem, there is a real root between -1 and 0, so there are at least two real roots. Let $x_1, x_2, x_3, x_4$ be the roots of $f$. By Viette's relations, we have $\sum x_i^2=(\sum x_i)^2-2(\sum_{i\neq j} x_ix_j) = 0$. If all of them were real, then, for all $j$, it holds $x_j=0$. However, but $0$ is not a root of $f$. So two of the roots are real and the other are complex.

Let $\alpha$ be one of the real roots of $f$. Take the algebraic number field $K:=\Q(\alpha)$. Then $\Q \subset K$ is an extension of degree 4, and $X(K, \mathcal{O}_K^{*, +})$ defines an OT manifold of type $(2, 1)$. 

{\noindent{\bf Claim 2:}} $\mathrm{Gal}(f) \simeq S_4$.\\
Indeed, let $\Q_f$ denote the splitting field of $f$ ({\itshape i.e.} the smallest field that contains all the roots of $f$). Note that $\Q_f \neq K$, since  $\Q_f$ contains also complex numbers (namely the complex roots of $f$).
We recall that $\mathrm{Gal}(f) := \{f\colon \Q_f \rightarrow \Q_f \;:\; f(q) = q, \forall q \in \Q \}$.
In \cite[Theorem 7.5.4]{r}, $\mathrm{Gal}(f)$ is explicited for any quartic polynomial $f$. The resolvent of $f$ is the cubic polynomial $q(X) = X^3+4X+1$. As this is an irreducible polynomial over $\Q[X]$ and its discriminant $\Delta = -283$ satisfies $\sqrt{\Delta} \notin \Q$, according to the cited theorem, we have $\mathrm{Gal}(f) \simeq S_4$.

{\noindent{\bf Claim 3:}} There is no intermediate field $\Q \subset T \subset K$.\\
Indeed, let us assume that there exists an intermediate field $\Q \subset T \subset K$. Then we have:
$\Q \subset T \subset K \subset \Q_f$. Since $[\Q_f : \Q]=24$ and $[K: \Q] =4$, $[\Q_f: K]=6$ and we have, in fact, $\mathrm{Gal}(\Q_f/K) \simeq S_3$. This further implies that $S_3 \lneq \mathrm{Gal}(\Q_f/T) \lneq S_4$. However, there is no such intermediate group between $S_3$ and $S_4$, since by a known result in group theory, $S_3$ is a maximal subgroup of $S_4$. Threfore there is no intermediate field between $\Q$ and $K$ and thus, $X(K, \mathcal{O}_K^{*, +})$ satisfies the requirements imposed in Theorem \ref{thm:mostow-ot}.
\end{rmk}

\begin{exa}
For example, for $s=2$ and $t=1$
we choose a co-frame of invariant $1$-forms $\{e^1, e^2, e^3, e^4, e^5,e^6\}$ with structure equations (cf. \cite[Section 6]{kasuya-blms})
$$
\left\{\begin{array}{rcl}
            de^1 &=&          0 \\[5pt]
            de^2 &=& 0 \\[5pt]
            de^3 &=& -e^1 \wedge e^3\\[5pt]
            de^4 &=& -e^2\wedge e^4\\[5pt]
            de^5 &=& \frac{1}{2}e^1 \wedge e^5+c_1 e^1 \wedge e^6+
			\frac{1}{2}e^2 \wedge e^5+c_2 e^2 \wedge e^6 \\[5pt]
            de^6 &=& -c_1 e^1 \wedge e^5+\frac{1}{2}e^1 \wedge e^6-
			c_2 e^2 \wedge e^5+\frac{1}{2}e^2 \wedge e^6
           \end{array}\right. \;,
$$
for some $c_1, c_2\in\R$.
The possible Lee forms of lcs structures are: $e^1+e^2$; and, when $c_1\neq c_2$, also $-e^1-e^2$ (take $\Omega = \omega_{12} e^1\wedge e^2 + \omega_{14}e^1\wedge e^4 + \frac{(4c_1c_2 + 9)\omega_{25} -
6(c_1 - c_2)\omega_{26}}{4c_2^2 + 9}e^1\wedge e^5 + \frac{6(c_1 - c_2)\omega_{25} +
(4c_1c_2 + 9)\omega_{26}}{4c_2^2 + 9}e^1\wedge e^6 + \omega_{23}e^2\wedge e^3 +
\omega_{25}e^2\wedge e^5 + \omega_{26}e^2\wedge e^6 + \omega_{34}e^3\wedge e^4$ for coefficients $\omega_{jk}$ such that $\frac{ 36(c_1-c_2)\cdot (\omega_{25}^2 + \omega_{26}^2)\cdot \omega_{34} }{ 4c_2^2+9 }\neq0$).
The almost-symplectic form
\begin{equation}\label{eq:lcs-OT}
\Omega \;:=\; 2e^1\wedge e^3+ e^1\wedge e^4+e^2\wedge e^3+2e^2\wedge e^4+e^5\wedge e^6
\end{equation}
is a locally conformally symplectic structure with Lee form
$$ \vartheta \;:=\; e^1+e^2 \;. $$
It admits a compatible complex structure $J$:
$$ J e^1 \;:=\; e^3 \;, \quad J e^2 \;:=\; e^4 \quad
J e^3 \;:=\; e^6. 
$$
For suitable values of $c_1$ and $c_2$, by Theorem \ref{thm:mostow-ot} and Proposition \ref{prop:mostow} one can compute the lcs cohomologies of $X(K,U)$.
In Table \ref{table:ot21} we report the dimensions of the Morse-Novikov cohomology groups
(computations have been performed with the help of Sage \cite{sage}.)
Notice that for $k=0$ we recover the Betti numbers of $X(K,U)$ as already computed in \cite[Remark 2.8]{oeljeklaus-toma}.
\end{exa}

\begin{center}
\begin{table}[ht]
 \centering
{\resizebox{\textwidth}{!}{
\begin{tabular}{>{$\mathbf\bgroup}c<{\mathbf\egroup$} | >{$}c<{$} >{$}c<{$} >{$}c<{$} >{$}c<{$} >{$}c<{$} >{$}c<{$} >{$}c<{$} ||}
\toprule
\mathbf{k} & \mathbf{\dim H^0_{d_k}} & \mathbf{\dim H^1_{d_k}} & \mathbf{\dim H^2_{d_k}} & \mathbf{\dim H^3_{d_k}} & \mathbf{\dim H^4_{d_k}} & \mathbf{\dim H^5_{d_k}} & \mathbf{\dim H^6_{d_k}} \\
\toprule
k=-1 & 0 & 0 & 1 & 2 & 1 & 0 & 0 \\
k=0 & 1 & 2 & 1 & 0 & 1 & 2 & 1 \\
k=1 & 0 & 0 & 1 & 2 & 1 & 0 & 0 \\
\midrule[0.02em]
\bottomrule
\end{tabular}
}}
\caption{Summary of the dimensions of the Morse-Novikov cohomologies of an Oeljeklaus-Toma manifold of type $(2,1)$. (Just non-trivial cohomology groups are reported.)}
\label{table:ot21}
\end{table}
\end{center}

More in general, we show that Oeljeklaus-Toma manifolds of type $(s,1)$ that satisfy the technical condition in Theorem \ref{thm:mostow-ot} can be found for any $s\geq 1$.

\begin{prop}\label{prop:example-mostow}
Let $s>0$ be a natural number. Then there exists $K$ an algebraic number field with $s$ real embeddings and $2$ conjugate complex embeddings such that there is no intermediate extension between $\Q$ and $K$.
\end{prop}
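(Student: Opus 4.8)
The plan is to translate the ``no intermediate extension'' requirement into a statement about the Galois group and then produce, for each $s$, a polynomial realizing it. Write $n:=s+2$. Applying the Galois correspondence to the Galois closure $\Q_f$ of $K=\Q(\alpha)$, with $G:=\mathrm{Gal}(\Q_f/\Q)$ and $H:=\mathrm{Gal}(\Q_f/K)$, the intermediate fields $\Q\subseteq T\subseteq K$ are in order-reversing bijection with the subgroups between $H$ and $G$; hence there is no proper intermediate extension if and only if $H$ is a maximal subgroup of $G$. When $G=S_n$ acts on the $n$ roots of the minimal polynomial of $\alpha$, the subgroup $H$ is the stabilizer of one root, i.e.\ a conjugate of $S_{n-1}$, and the natural action of $S_n$ on $n$ points is $2$-transitive, hence primitive, so $S_{n-1}$ is maximal in $S_n$ (this is exactly the mechanism used in the Remark for $(s,t)=(2,1)$). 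Therefore it suffices to exhibit, for every $s\ge 1$, a monic irreducible $f\in\Z[X]$ of degree $n=s+2$ with exactly $s$ real roots and one pair of complex-conjugate roots (signature $(s,1)$) and with $\mathrm{Gal}(f/\Q)=S_n$.

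To force $\mathrm{Gal}(f/\Q)=S_n$ I would impose three local conditions at distinct primes $p_1,p_2,p_3$ (with $p_3>n$), prescribing the reductions $f\bmod p_i$ to be: irreducible of degree $n$ modulo $p_1$; a product of an irreducible factor of degree $n-1$ and a distinct linear factor modulo $p_2$; and a product of an irreducible quadratic and $n-2$ distinct linear factors modulo $p_3$. Such residue polynomials exist over each $\F_{p_i}$, and each is separable, so $p_i\nmid\mathrm{disc}(f)$ and, by Dedekind's theorem, the Frobenius cycle type at $p_i$ equals the multiset of factor degrees. The first reduction makes $f$ irreducible over $\Q$, so $G$ is transitive; the second contributes an $(n-1)$-cycle, which together with transitivity forces $G$ to be $2$-transitive, hence primitive; the third contributes a transposition. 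By Jordan's classical theorem a primitive subgroup of $S_n$ containing a transposition equals $S_n$, so $G=S_n$. By the Chinese Remainder Theorem these conditions amount to prescribing $f$ modulo $M:=p_1p_2p_3$, so there is a monic $f_0\in\Z[X]$ of degree $n$ such that every monic $f\equiv f_0\pmod M$ (coefficientwise) has Galois group $S_n$.

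It remains to find, inside this congruence class, a polynomial of signature $(s,1)$. Here I would fix the ``shape'' polynomial $g_T(X):=(X^2+T^2)\prod_{j=1}^{s}(X-jT)$, which for every integer $T$ has exactly the $s$ simple real roots $T,2T,\dots,sT$ and the conjugate pair $\pm\sqrt{-1}\,T$. Choosing each non-leading coefficient of $f$ to be the representative of its prescribed residue class modulo $M$ nearest to the corresponding coefficient of $g_T$ yields a monic $f\equiv f_0\pmod M$ with $\lVert f-g_T\rVert_\infty\le M$. For $T$ large the real roots of $g_T$ are $\sim T$ apart and the complex roots have imaginary part $T$, while on circles of a suitable fixed radius around each root the modulus of $g_T$ dominates the bounded perturbation $f-g_T$; a Rouché argument then places exactly one root of $f$ in each such disk. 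Since $f$ has real coefficients, a disk centred on the real axis containing a single root must contain a real root, whereas a disk around $\sqrt{-1}\,T$ of bounded radius forces a genuine non-real root (with its conjugate near $-\sqrt{-1}\,T$). Counting gives exactly $s$ real roots and one complex pair, so $f$ has signature $(s,1)$; taking $K:=\Q(\alpha)$ for a root $\alpha$ completes the argument.

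The main obstacle is precisely this last step: reconciling the archimedean constraint (the exact real-root count) with the finitely many residue constraints that pin down the Galois group, since fixing $f\bmod M$ determines the coefficients only up to $M\Z$. The scaling device $g_T$ is what resolves it, as the roots spread out at rate $T$ while the coefficient perturbation forced by the congruences stays bounded by $M$; one only needs to verify that a bounded coefficient perturbation cannot alter the signature once $T$ is large, which is the content of the Rouché estimate. Alternatively one could invoke Hilbert's irreducibility theorem to obtain $S_n$-specializations arbitrarily close to any prescribed real shape, but the explicit local-conditions argument is more in the spirit of the Remark.
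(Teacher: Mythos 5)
Your proof is correct and follows essentially the same route as the paper: reduce to the maximality of $S_{n-1}$ in $S_n$ via the Galois correspondence, build an $S_n$-polynomial of degree $n=s+2$ by prescribing factorization types modulo finitely many primes (the paper cites van der Waerden's construction through Weintraub where you argue directly with Dedekind and Jordan), and conclude because the signature-$(s,1)$ region in coefficient space is large enough to meet the resulting congruence class (the paper cites Remark 1.1 of Oeljeklaus--Toma for this, where you give an explicit Rouch\'e estimate with the scaled shape polynomial $g_T$). The additional details you supply in place of these citations are sound.
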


\begin{proof} Let $n=s+2$.  The idea is to prove the existence of a monic irreducible polynomial $f \in \Z[X]$ of degree $s+2$ such that $f$ has $s$ real roots, $2$ conjugate complex roots and $\mathrm{Gal}(f)=S_n$. Once proven this, take $K=\Q(\alpha)$, where $\alpha$ is one of the roots of $f$. Like in the example, we would have $\mathrm{Gal}(\Q(f)/K)=S_{n-1}$. The existence of an intermediate field between $\Q$ and $K$ would  imply the existence of a subroup $H$ of $S_n$, such that $S_{n-1} \lneq H \lneq S_n$. But this does not exist, as $S_{n-1}$ is a maximal subgroup of $S_n$.

A construction of a polynomial $f$ whose $\mathrm{Gal}(f)$ is $S_n$ was given by B.L. van der Waerden. The idea was to consider the following monic polynomial $f = - 15 f_1+10f_2+6f_3$, where $f_1$, $f_2$ and $f_3$ are degree $n$ polynomials and $f_1$ reduced in $\Z_2[X]$ is irreducible, $f_2$ decomposes in $\Z_3[X]$ as a product of a linear factor and a degree $n-1$ irreducible polynomial, and $f_3$ decomposes in $\Z_5$ as a product of an irreducible quadratic polynomial and a degree $n-2$ irreducible polynomial, if $n$ is odd, or as a product of an irreducible quadratic polynomial and two irreducible polynomials of odd degree, if $n$ is even. It is explained in Proposition 4.7.10 in \cite{weintraub} why there exist $f_1$, $f_2$ and $f_3$ with these properties and why $f$ thus defined has Galois group $S_n$. Observe that $f$ is irreducible because we have $f=f_1$ modulo $2$, which is irreducible in $\Z_2[X]$. Morever, if $g$ is any polynomial of degree $n-1$, then $f+30g$ is also an irreducible polynomial with Galois group $S_n$.

Now we use the same argument as in Remark 1.1 in \cite{oeljeklaus-toma}. Namely, let $D=\{(a_1, \ldots, a_n)\} \subseteq \Q^n$ be the set of $n$-uples such that $h=X^n+a_1X^{n-1}+\cdots+a_n$ (not necessarily irreducible) has $s$ real roots and $2$ complex roots. Then $D$ is a non-empty set which contains arbitrarily large open balls, as argumented in \cite{oeljeklaus-toma}.
If $f=X^n+b_1X^{n-1}+\cdots+b_n$, consider the set $D'=(b_1, b_2, \ldots , b_n)+30\Z^n$. Then $D'$ intersects $D$ and the intersection consists of irreducible polynomials with $s$ real roots, $2$ complex roots and Galois group $S_n$. 
\end{proof}

As a corollary we obtain:

\begin{cor}
For any natural number $s \geq 1$, we obtain an Oeljeklaus-Toma manifold of type $(s, 1)$ satisfying the Mostow condition.
\end{cor}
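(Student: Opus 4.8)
The plan is to combine the two results established immediately above, Proposition \ref{prop:example-mostow} and Theorem \ref{thm:mostow-ot}. The corollary is essentially a matter of checking that the hypotheses line up, so I would keep the argument very short and purely formal.

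First I would fix $s \geq 1$ and invoke Proposition \ref{prop:example-mostow} to produce an algebraic number field $K$ with exactly $s$ real embeddings and one pair of conjugate complex embeddings, that is, of Oeljeklaus-Toma signature $(s,1)$ with $t=1$, and crucially with no intermediate field strictly between $\Q$ and $K$. Next I would pass to the associated OT manifold: since $t=1$, admissibility is automatic, as recorded in the proof of Theorem \ref{thm:mostow-ot} (``any $U$ when $t=1$ is admissible''), so I may simply take $U=\mathcal{O}_K^{*,+}$ and form $X(K,\mathcal{O}_K^{*,+})$, an OT manifold of type $(s,1)$.

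Finally I would verify the hypothesis of Theorem \ref{thm:mostow-ot}, which asks only that there be no \emph{totally real} intermediate extension $\Q \subset T \subset K$. By the choice of $K$ there is no intermediate extension whatsoever, hence in particular none that is totally real, and Theorem \ref{thm:mostow-ot} applies to give that $X(K,\mathcal{O}_K^{*,+})$ satisfies the Mostow condition.

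I do not expect any genuine obstacle here: the substantive content lies entirely in the two cited results, namely the number-theoretic construction of Proposition \ref{prop:example-mostow} (van der Waerden's polynomials forcing $\mathrm{Gal}(f)\simeq S_n$, whence the maximality of $S_{n-1}$ in $S_n$ rules out intermediate fields) and the Zariski-closure computation of Theorem \ref{thm:mostow-ot} (reducing Mostow to the Gorbatsevich-type eigenvalue condition). The only point requiring a word of care is matching the weaker ``no totally real intermediate extension'' hypothesis of the theorem against the stronger ``no intermediate extension at all'' conclusion of the proposition; this implication is immediate, and so the proof reduces to stringing the two statements together.
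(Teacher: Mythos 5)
Your proposal is correct and is precisely the argument the paper intends (the corollary is stated without an explicit proof, as an immediate consequence of Proposition \ref{prop:example-mostow} combined with Theorem \ref{thm:mostow-ot}). You correctly handle the one point worth noting, namely that the absence of \emph{any} intermediate extension trivially implies the absence of a totally real one, and that $U=\mathcal{O}_K^{*,+}$ is admissible when $t=1$.
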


\FloatBarrier


\begin{thebibliography}{99}

\bibitem[Ang13]{angella-1}
D. Angella, The cohomologies of the Iwasawa manifold and of its small deformations, {\em J. Geom. Anal.} \textbf{23} (2013), no. 3, 1355--1378.

\bibitem[ABP17]{angella-bazzoni-parton}
D. Angella, G. Bazzoni, M. Parton, Four dimensional locally conformal symplectic Lie algebras.

\bibitem[AK13]{angella-kasuya-3}
D. Angella, H. Kasuya, Symplectic Bott-Chern cohomology of solvmanifolds, to appear in {\em J. Symplectic Geom.}, \texttt{arXiv:1308.4258}.

\bibitem[AK13]{angella-kasuya-4}
D. Angella, H. Kasuya, Hodge theory for twisted differentials, {\em Complex Manifolds} \textbf{1} (2014), 64--85.

\bibitem[ATa16]{angella-tardini-1}
D. Angella, N. Tardini, Quantitative and qualitative cohomological properties for non-K\"ahler manifolds, {\em Proc. Amer. Math. Soc.} \textbf{145} no.~1, 273--285 (2017)

\bibitem[ATo15]{angella-tomassini-JNC}
D. Angella, A. Tomassini, Inequalities à la Fr\"olicher and cohomological decompositions, {\em J. Noncommut. Geom.} \textbf{9} (2015), no. 2, 505--542.

\bibitem[AU15]{angella-ugarte-1}
D. Angella, L. Ugarte, Locally conformal Hermitian metrics on complex non-K\"ahler manifolds, {\em Mediterr. J. Math.} \textbf{13} (2016), no. 4, 2105--2145.

\bibitem[BK11]{BKg}
G. Bande, D. Kotschick, Contact pairs and locally conformally symplectic structures, {\em Contemp. Math.} \textbf{542} (2011), 85--95.

\bibitem[Ban02]{banyaga}
A. Banyaga, Some properties of locally conformal symplectic structures, {\em Comment. Math. Helv.} \textbf{77} (2002), no. 2, 383--398. 

\bibitem[Bor91]{borel}
A. Borel, {\em Linear algebraic groups}, Second edition, Graduate Texts in Mathematics, \textbf{126}, Springer-Verlag, New York, 1991.

\bibitem[Bry88]{brylinski}
J.-L. Brylinski, A differential complex for Poisson manifolds, {\em J. Differential Geom.} \textbf{28} (1988), no.~1, 93--114.

\bibitem[Cav05]{cavalcanti}
G.~R. Cavalcanti, {\em New aspects of the $dd^c$-lemma}, Oxford University D. Phil thesis, \texttt{arXiv:0501406}.

\bibitem[CF01]{console-fino}
 S. Console, A. Fino, Dolbeault cohomology of compact nilmanifolds, {\em Transform. Groups} \textbf{6} (2001), no. 2, 111--124.

\bibitem[DGMS75]{deligne-griffiths-morgan-sullivan}
P. Deligne, Ph. Griffiths, J. Morgan, D. Sullivan, Real homotopy theory of K\"ahler manifolds, {\em Invent. Math.} \textbf{29} (1975), no. 3, 245--274.

\bibitem[EM15]{eliashverg-murphy}
Y. Eliashberg, E. Murphy, Making cobordisms symplectic, \texttt{arXiv:1504.06312}.

\bibitem[GOPP06]{gini-ornea-parton-piccinni}
R. Gini, L. Ornea, M. Parton, P. Piccinni, Reduction of Vaisman structures in complex and quaternionic geometry, {\em J. Geom. Physics} \textbf{56} (2006), 2501--2522.

\bibitem[Gor03]{gorbatsevich}
V. V. Gorbatsevich, Symplectic structures and cohomology on some solvmanifolds, {\em Siberian Math. J.} \textbf{44} (2003), no. 2, 260--274, translated from {\em Sibirsk. Mat. Zh.} \textbf{44} (2003), no. 2, 322--342.

\bibitem[GL84]{guedira-lichnerowicz}
F. Guedira, A. Lichnerowicz, Géométrie des algèbres de Lie locales de Kirillov, {\em J. Math. Pures et Appl.} \textbf{63} (1984), 407--484.

\bibitem[Gui01]{guillemin}
V. Guillemin, Symplectic Hodge theory and the $d\delta$-Lemma, preprint, Massachusetts Insitute of Technology, 2001.

\bibitem[HR99]{haller-rybicki}
S. Haller, T. Rybicki, On the group of diffeomorphisms preserving a locally conformal symplectic structure, {\em Ann. Global Anal. Geom.} \textbf{17} (1999), no. 5, 475--502. 

\bibitem[Has05]{hasegawa-jsg}
K. Hasegawa, Complex and K\"ahler structures on compact solvmanifolds, Conference on Symplectic Topology, {\em J. Symplectic Geom.} \textbf{3} (2005), no.~4, 749--767.

\bibitem[Hat60]{hattori}
A. Hattori, Spectral sequence in the de Rham cohomology of fibre bundles, {\em J. Fac. Sci. Univ. Tokyo Sect. I} \textbf{8} (1960), 289--331.

\bibitem[Ino74]{inoue}
M. Inoue, On surfaces of Class $VII_{0}$, {\em Invent. Math.} \textbf{24} (1974), 269--310.

\bibitem[Kas13]{kasuya-blms}
H. Kasuya, Vaisman metrics on solvmanifolds and Oeljeklaus-Toma manifolds, {\em Bull. Lond. Math. Soc.} \textbf{45} (2013), no.~1, 15--26.

\bibitem[Kas15]{kasuya-imrn}
H. Kasuya, Flat bundles and hyper-Hodge decomposition on solvmanifolds, {\em Int. Math. Res. Not. IMRN} \textbf{2015} (2015), no. 19, 9638--9659.

\bibitem[Kod64]{kodaira-surfaces-1}
K. Kodaira, On the structure of compact complex analytic surfaces. I, {\em Amer. J. Math.} \textbf{86} (1964), no.~4, 751--798.

\bibitem[KS60]{kodaira-spencer-3}
K. Kodaira, D.~C. Spencer, On deformations of complex analytic structures. III. Stability theorems for complex structures, {\em Ann. of Math. (2)} \textbf{71} (1960), 43--76.

\bibitem[LV15]{le-vanzura}
H. V. Le, J. Vanzura, Cohomology theories on locally conformal symplectic manifolds, {\em Asian J. Math.} \textbf{19} (2015), no.~1, 45--82.

\bibitem[Ly\v{c}79]{lycagin}
V. V. Ly\v{c}agin, Contact geometry and second-order nonlinear differential equations, {\em Uspekhi Mat. Nauk} \textbf{34} (1979), no. 1(205), 137--165.

\bibitem[dLLMP03]{deleon-lopez-marrero-padron}
M. de León, B. López, Belén, J. C. Marrero, E. Padrón, On the computation of the Lichnerowicz-Jacobi cohomology, {\em J. Geom. Phys.} \textbf{44} (2003), no. 4, 507--522.

\bibitem[Mat95]{mathieu} O. Mathieu, Harmonic cohomology classes of symplectic manifolds,
{\em Comment. Math. Helv.} \textbf{70} (1995), no.~1, 1--9.

\bibitem[Mer98]{merkulov}
S.~A. Merkulov, Formality of canonical symplectic complexes and Frobenius manifolds, {\em Int. Math. Res. Not.} \textbf{1998} (1998), no.~14, 727--733.

\bibitem[Mil05]{millionshchikov}
D. V. Millionshchikov, Cohomology of solvable Lie algebras, and solvmanifolds, {\em Mat. Zametki} \textbf{77} (2005), no. 1, 67--79; translation in {\em Math. Notes} \textbf{77} (2005), no. 1-2, 61--71.

\bibitem[Mos61]{mostow}
G. D. Mostow, Cohomology of topological groups and solvmanifolds, {\em Ann. of Math. (2)} \textbf{73} (1961), no. 1, 20--48. 

\bibitem[Neu99]{neukirch}
J. Neukirch, {\em Algebraic Number Theory}, Springer Verlag, 1999.

\bibitem[Nov81]{novikov-1}
S. P. Novikov, Multi-valued functions and functionals. An analogue of Morse theory, {\em Soviet Math. Doklady} \textbf{24} (1981), 222--226.

\bibitem[Nov82]{novikov-2}
S. P. Novikov, The Hamiltonian formalism and a multi-valued analogue of Morse theory, {\em Russian Math. Surveys} \textbf{37} (1982), 1--56.

\bibitem[OT05]{oeljeklaus-toma}
K. Oeljeklaus, M. Toma, Non-K\"ahler compact complex manifolds associated to number fields, {\em Ann. Inst. Fourier (Grenoble)} \textbf{55} (2005), no.~1, 161--171.

\bibitem[OV]{ornea-verbitsky-book}
L. Ornea, M. Verbitsky, in preparation.

\bibitem[Oti16]{otiman-2}
A. Otiman, Morse-Novikov cohomology of locally conformally K\"ahler surfaces, \texttt{arXiv:1609.07675}.

\bibitem[Paz87]{pajitnov}
A. V. Pazhitnov, {\em An analytic proof of the real part of Novikov's inequalities}, Soviet Mat. Dokl. {\bf 35}(1987), 456--457.

\bibitem[Rag72]{raghunathan}
M. S. Raghunathan, {\em Discrete subgroups of Lie groups}, Ergebnisse der Mathematik und ihrer Grenzgebiete, Band \textbf{68}, Springer-Verlag, New York-Heidelberg, 1972.

\bibitem[Rom06]{r}
S. Roman, {\em Field Theory}, Springer-Verlag, 2006.

\bibitem[S{$^{+}$}09]{sage}
\emph{{S}age {M}athematics {S}oftware ({V}ersion 6.10)}, The Sage Developers, 2015, {\url{http://www.sagemath.org}}.

\bibitem[Sch07]{schweitzer}
M. Schweitzer, Autour de la cohomologie de Bott-Chern, Pr\'epublication de l'Institut Fourier no.~703 (2007), \texttt{arXiv:0709.3528}.

\bibitem[Thu76]{thurston}
W.~P. Thurston, Some simple examples of symplectic manifolds, {\em Proc. Amer. Math. Soc.} \textbf{55} (1976), no.~2, 467--468.

\bibitem[TY12a]{tseng-yau-1}
L.-S. Tseng, S.-T. Yau, Cohomology and Hodge theory on symplectic manifolds: I, {\em J. Differential Geom.} \textbf{91} (2012), no.~3, 383--416.

\bibitem[TY12b]{tseng-yau-2}
L.-S. Tseng, S.-T. Yau, Cohomology and Hodge theory on symplectic manifolds: II, {\em J. Differential Geom.} \textbf{91} (2012), no.~3, 417--443.

\bibitem[Vul14]{vuletescu}
V. Vuletescu, LCK metrics on Oeljeklaus-Toma manifolds versus Kronecker's theorem, {\em Bull. Math. Soc. Sci. Math. Roumanie (N.S.)} \textbf{57(105)} (2014), no.~2, 225--231.

\bibitem[Wei06]{weintraub} 
S. Weintraub, {\em Galois theory}, Springer Verlag, 2006.

\bibitem[Yan96]{yan}
D. Yan, Hodge Structure on Symplectic Manifolds, {\em Adv. in Math.} \textbf{120}(1996), 143--154.

\end{thebibliography}
\end{document}